\theoremstyle{definition}
\newtheorem{definition}{Definition}[section]
\theoremstyle{remark}
\newtheorem{remark}[definition]{Remark}
\theoremstyle{theorem}
\newtheorem{theorem}[definition]{Theorem}
\theoremstyle{corollary}
\newtheorem{corollary}[definition]{Corollary}
\theoremstyle{lemma}
\newtheorem{lemma}[definition]{Lemma}
\theoremstyle{example}
\newtheorem{example}[definition]{Example}
\theoremstyle{prop}
\newtheorem{prop}[definition]{Proposition}
\author{}
\begin{document}
\title{Filtrations and torsion pairs in Abramovich-Polishchuk's heart}

\author{Yucheng Liu}

\address{College of Mathematics and Statistics, Center of Mathematics, Chongqing University, Chongqing, 401331, China}
\email{noahliu@cqu.edu.cn}

\keywords{Bridgeland stability conditions, product varieties, filtrations}

\subjclass[2020]{14F08, 14J40, 18E99}
\maketitle 
\begin{abstract}
We study abelian subcategories and torsion pairs in Abramovich-Polishchuk's heart. And we apply the construction from \cite{Stabilityconditionsonproductvarieties} on a full triangulated subcategory $\mathcal{D}_S^{\leq 1}$ in $D(X\times S)$, for an arbitrary smooth projective variety $S$. We also define a notion of $l$-th level stability, which is a generalization of the slope stability and the Gieseker stability. We show that for any object $E$ in Abramovich-Polishchuk's heart, there is a unique filtration whose factors are $l$-th level semistable, and the phase vectors are decreasing in a lexicographic order.
	
\end{abstract}
	\section{Introduction}
	The theory of Bridgeland stability conditions was introduced by Bridgeland in \cite{bridgeland2007stability}, motivated by Douglas's work on D-branes and $\Pi$-stability (see \cite{douglas2002dirichlet}). This theory was further studied by Kontsevich and Soilbelman (see \cite{kontsevich2008stability}), and it turns out to be related to many different branches of mathematics including algebraic geometry (\cite{bayer2014projectivity}, \cite{bayer2014mmp}), symplectic geometry (\cite{Conjecturesonstabilitycondition}, \cite{Stabilityconditionsonsymplectictopology}), representation theory (\cite{kontsevich2008stability}, \cite{Scatteringdiagrams}), and curve counting theories (\cite{CurvecountingtheoriesviastableojectsI}, \cite{CurvecountingtheoriesviastableobjectsII}).

	Abramovich-Polishchuk's construction of sheaves of $t$-structures (\cite{APsheaves},\cite{polishchuk2007constant}) becomes a fundamental tool in the study of Bridgeland stability conditions (see \cite{Stabilityconditionsinfamilies}) and moduli problems of complexes (\cite{bayer2014projectivity}). More explicitly, given a $t$-structure with Noetherian heart on the bounded derived category of coherent sheaves $D(X)$, with $X$ a smooth variety over a field $k$ of characteristic $0$, Abramovich and Polishchuk (\cite{APsheaves}) constructed a $t$-structure on $D(X\times S)$, where $S$ is another smooth variety over the same base field $k$. Their construction was refined by Polishchuk in his paper \cite{polishchuk2007constant}, where he removed the characteristic $0$ and smoothness assumption. 
	
	Their construction has important applications on the study of Bridgeland stability conditions. For instance, Bayer and Macr\`i used it to construct a nef divisor on the moduli space of Bridgeland semistable objects (see \cite{bayer2014mmp} and \cite{bayer2014projectivity}); the author used their construction to construct Bridgeland stability conditions on $X\times S$ (see \cite{Stabilityconditionsonproductvarieties}), where $S$ is a smooth projective curve.
	
	In this paper, we will further study the relation between Abramovich-Polishchuk's heart (AP heart for short) and stability conditions. As in \cite{Stabilityconditionsonproductvarieties}, suppose that we have a stability condition $\sigma=(\mathcal{A},Z)$ on $D(X)$ (see Definition \ref{first WSC} below for the definition of stability conditions), with $\mathcal{A}$ the Noetherian heart of a bounded $t$-structure on $D(X)$ and $Z:K_0(\mathcal{A})\rightarrow \mathbb{C}$ a central charge with discrete image. We denote the corresponding AP heart on $D(X\times S)$ by $\mathcal{A}_S$. Then for any object $E\in\mathcal{A}_S$, there is a complexified Hilbert polynomial $$L_E(n)=a_r(E)n^r+a_{r-1}(E)n^{r-1}+\cdots +a_0(E)+i(b_r(E)n^r+b_{r-1}(E)n^{r-1}\cdots+b_0(E))$$ where $r= dim(S)$ and $a_i, b_i$ are linear maps from the Grothendieck group $ K_0(D(X\times S))$ to $\mathbb{R}$ for any $0\leq i\leq r$.

	For any integer $0\leq j\leq r$, one can define the full subcategory $\mathcal{A}_S^{\leq j} $ in $\mathcal{A}_S$ consisting of objects whose complexified Hilbert polynomial is of degree $\leq j$. It is easy to show that $\mathcal{A}_S^{\leq j}$ is an abelian subcategory. And there is a unique filtration $$0=E_0\subset E_1\subset\cdots\subset E_r=E$$ for any object $E\in\mathcal{A}_S$, such that $E_j$ is the maximal subobject of $E$ in $\mathcal{A}_S^{\leq j}$. If we use $\mathcal{D}^{\leq 1}_S$ to denote the minimal triangulated subcategory generated by $\mathcal{A}_S^{\leq 1}$ in $D(X\times S)$, we can apply the construction in \cite{Stabilityconditionsonproductvarieties} on $\mathcal{D}_S^{\leq 1}$. Thus we get the following result.
	
	\begin{theorem}\label{main corollary}
		The triangulated category $\mathcal{D}_S^{\leq 1}$ admits stability conditions.
	\end{theorem}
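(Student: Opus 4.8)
The plan is to exhibit $\mathcal{D}_S^{\leq 1}$ as a triangulated category carrying a bounded $t$-structure whose heart is the Noetherian abelian category $\mathcal{A}_S^{\leq 1}$, and then to run the construction of \cite{Stabilityconditionsonproductvarieties} on this heart, with $\mathcal{A}_S^{\leq 1}$ and the degree-$\leq 1$ part of the Hilbert polynomial playing the roles that the full AP heart of a surface-times-curve and its Hilbert polynomial played there. For the first point, note that $\mathcal{A}_S^{\leq 1}$ is not merely an abelian subcategory of $\mathcal{A}_S$ but is closed under subobjects, quotients and extensions inside $\mathcal{A}_S$ (the complexified Hilbert polynomial is additive on short exact sequences, and an honest subobject or quotient of $E$ cannot raise the degree). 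Since the AP cohomology functors $\mathcal{H}^i(-)\colon D(X\times S)\to\mathcal{A}_S$ are cohomological and $\mathcal{D}_S^{\leq 1}$ is generated by $\mathcal{A}_S^{\leq 1}$ under cones and shifts, an induction on the number of cones shows $\mathcal{H}^i(E)\in\mathcal{A}_S^{\leq 1}$ for every $E\in\mathcal{D}_S^{\leq 1}$ and every $i$; hence the AP $t$-structure restricts to $\mathcal{D}_S^{\leq 1}$, with heart $\mathcal{A}_S^{\leq 1}$, and boundedness is immediate. Noetherianity of $\mathcal{A}_S^{\leq 1}$ is inherited from that of $\mathcal{A}_S$, which follows from Noetherianity of $\mathcal{A}$ via \cite{APsheaves,polishchuk2007constant}.

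Next I would construct the central charge. Restricting the coefficient maps of $L_E(n)$, one obtains group homomorphisms $a_1,a_0,b_1,b_0\colon K_0(\mathcal{A}_S^{\leq 1})\to\mathbb{R}$ (the higher coefficients vanish identically on $\mathcal{A}_S^{\leq 1}$), and for a real parameter $s$ I define $Z^{\leq 1}_s\colon K_0(\mathcal{A}_S^{\leq 1})\to\mathbb{C}$ by the same formula used in \cite{Stabilityconditionsonproductvarieties} for a product with a curve: the leading coefficient $b_1$ serves as generalised rank, $a_1$ shifted by $sb_1$ as generalised degree, and $a_0,b_0$ enter only to break ties in the locus $b_1=a_1=0$. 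Since $Z$ has discrete image, the coefficient maps take values in a discrete subgroup of $\mathbb{R}$, so $Z^{\leq 1}_s$ has discrete image as well. The substantive point is positivity: $Z^{\leq 1}_s(E)$ must lie in the strict upper half-plane or on the negative real axis for every nonzero $E\in\mathcal{A}_S^{\leq 1}$. As in \cite{Stabilityconditionsonproductvarieties} this reduces to two facts about the AP heart, namely that $b_1(E)\geq 0$ with equality only when $E\in\mathcal{A}_S^{\leq 0}$, and that for $E\in\mathcal{A}_S^{\leq 0}$ the pair $(b_0(E),-a_0(E))$ is governed by the stability function $Z$ on $D(X)$ after restriction to closed points; both are proved there by a generic-flatness and restriction-to-a-point argument that applies verbatim here since a nonzero object of $\mathcal{A}_S^{\leq 1}$ has support whose image in $S$ is a curve.

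With $(\mathcal{A}_S^{\leq 1},Z^{\leq 1}_s)$ in hand, the Harder--Narasimhan property follows from the standard criterion exactly as adapted in \cite{Stabilityconditionsonproductvarieties}: discreteness of the image of $Z^{\leq 1}_s$ forbids infinite descending chains of phases, Noetherianity of $\mathcal{A}_S^{\leq 1}$ forbids infinite ascending chains of destabilising subobjects, and together these yield HN filtrations. Discreteness of the image of $Z^{\leq 1}_s$ also gives local finiteness and the support property in whichever form is demanded by Definition \ref{first WSC}, so that $(\mathcal{A}_S^{\leq 1},Z^{\leq 1}_s)$ is a genuine stability condition on $\mathcal{D}_S^{\leq 1}$, which proves Theorem \ref{main corollary}.

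The main obstacle is the positivity step: the construction of \cite{Stabilityconditionsonproductvarieties} is carried out under the hypothesis $\dim S=1$, and its positivity and boundedness estimates exploit the explicit torsion structure of the AP heart of $X\times(\text{curve})$. The work is to check that these inputs depend only on the degree-$\leq 1$ part of the Hilbert polynomial and hence survive passage to a curve sitting inside a higher-dimensional $S$ — equivalently, that every object of $\mathcal{A}_S^{\leq 1}$ can be probed by restriction along closed points of $S$ in the same way as in the curve case. Once this compatibility is established, the remainder of the argument is a transcription of \cite{Stabilityconditionsonproductvarieties}.
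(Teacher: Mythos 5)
There is a genuine gap: you place the final central charge on the un-tilted heart $\mathcal{A}_S^{\leq 1}$ and ask that $Z^{\leq 1}_s(E)$ land in $\mathbb{H}\cup\mathbb{R}_{<0}$ for every nonzero $E\in\mathcal{A}_S^{\leq 1}$. This positivity is false in general, for the same reason that $\mathrm{coh}$ of a projective surface admits no numerical stability condition. Indeed, in the motivating special case $\mathcal{A}=\mathrm{coh}(X)$ with $X$ a curve, $Z=-\deg+i\,\mathrm{rk}$, and $S=\mathbb{P}^1$, one has $\mathcal{A}_S^{\leq 1}=\mathcal{A}_S=\mathrm{coh}(X\times\mathbb{P}^1)$, and the category contains both skyscrapers (which force the phase of the "tie-breaking" part) and line bundles with arbitrarily negative twists, which together obstruct any stability function built linearly from $a_1,b_1,a_0,b_0$. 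This is precisely why \cite{Stabilityconditionsonproductvarieties} — whose construction you say you are transcribing — does not put its stability function on the AP heart itself but on a tilt of it. Your "main obstacle" paragraph correctly identifies positivity as the crux, but the heart on which you propose to verify it is the wrong one, so no amount of generic-flatness/restriction argument will close the gap. (Also, a nonzero object of $\mathcal{A}_S^{\leq 1}$ need not have one-dimensional image in $S$: objects of $\mathcal{A}_S^{\leq 0}$ are supported over points.)

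The paper's proof has two ingredients you are missing. First, it uses Proposition \ref{positive coefficients} only to produce a \emph{weak} stability condition $\sigma_t=(\mathcal{A}_S^{\leq 1},Z_t)$ with $Z_t=a_1t-b_0+i\,b_1t$, and then tilts $\mathcal{A}_S^{\leq 1}$ inside $\mathcal{D}_S^{\leq 1}$ at the torsion pair of Remark \ref{Torsion pair remark} (via Lemma \ref{Tilt of torsion pairs}) to obtain a new heart $\mathcal{A}_S^{\leq 1,t}$; the genuine stability function $Z_S^{s,t}=b_1s+a_0+i(-a_1t+b_0)$ lives on that tilted heart. Second, checking that $Z_S^{s,t}$ is a stability function on $\mathcal{A}_S^{\leq 1,t}$ requires a Bogomolov--Gieseker-type input, namely the quadratic inequality $b_1(E)a_0(E)-b_0(E)a_1(E)\geq 0$ for $\sigma_t$-semistable $E$ (quoted from \cite{somequadraticinequalities}); without some such inequality there is no way to rule out objects of the tilted heart with vanishing imaginary part and nonnegative real part. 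Your preliminary observations (that $\mathcal{A}_S^{\leq 1}$ is closed under subobjects, quotients and extensions, is Noetherian, and is the heart of a bounded $t$-structure on $\mathcal{D}_S^{\leq 1}$) are correct and consistent with the paper, but the core of the argument — tilt plus quadratic inequality — is absent.
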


	In the meantime, we prove a sequence of positivity results on the coefficients of $L_E(n)$ (see also \cite[Lemma 4.1]{somequadraticinequalities}). These positivity results help us to construct weak stability functions on $\mathcal{A}_S$ and its abelian subcategories. This lead us to the notion of  $l$-th level stability, which can be viewed as a generalization of both the slope stability and the Gieseker stability. However, the phase of an object with respect to such stability is no longer a single real number, but a vector of real numbers. Then we show the existence and uniqueness of the filtration of an object $E\in\mathcal{A}_S$ with respect to such stability, which we call the lexicographic order filtration of $E$. 
\begin{theorem}\label{lexi-order filtration in introduction}
		Assume $l\leq j$, given $t_1,t_2,\cdots,t_l\in\mathbb{Q}_{>0}$, for any object $E\in\mathcal{A}_S^{\leq j}$, there exists a  unique filtration $$0=E_0\subset E_1\subset E_2\cdots \subset E_n=E,$$such that each quotient factor $E_{i}/E_{i-1}$ is $l$-th level semistable of phase $\vec{\phi}_i$ and $$\vec{\phi}_1>\vec{\phi}_2>\cdots>\vec{\phi}_n.$$
\end{theorem}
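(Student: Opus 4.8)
The plan is to derive the theorem from the general formalism of Harder--Narasimhan filtrations for a weak stability function, adapted so that the ``phase'' takes values in a totally ordered set — here $(\mathbb{R}^l,\text{lex})$ or a suitable quotient of it — rather than in $\mathbb{R}$, and then to run an induction on $l$. Concretely, I would first isolate the three abstract ingredients such an argument needs: (i) the ambient category $\mathcal{A}_S^{\leq j}$ is Noetherian; (ii) the phase vector $\vec\phi$ satisfies a weak see-saw property for the lexicographic order, i.e.\ for every short exact sequence $0\to A\to E\to B\to 0$ in $\mathcal{A}_S^{\leq j}$ one has $\vec\phi(A)\le\vec\phi(E)\le\vec\phi(B)$ or the reverse, with equality throughout when $\vec\phi(A)=\vec\phi(B)$; and (iii) a chain condition ensuring that the set of phase vectors of subobjects of a fixed $E$ has no infinite strictly increasing sequence. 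Granted (i)--(iii), existence follows by the usual recursion: take $E_1\subseteq E$ maximal for $\vec\phi$ among nonzero subobjects (it exists by (i) and (iii)), verify it is $l$-th level semistable and that $\vec\phi(E/E_1)<\vec\phi(E_1)$ using (ii), and iterate; the resulting chain $E_1\subset E_2\subset\cdots$ stabilizes by Noetherianity. Uniqueness then follows from the standard vanishing $\mathrm{Hom}(A,B)=0$ for $A$ semistable of phase strictly above that of a semistable $B$, which forces any two such filtrations to agree step by step.

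For (i) I would invoke that $\mathcal{A}_S$ is Noetherian — part of Abramovich--Polishchuk's construction, since the input heart $\mathcal{A}$ is Noetherian — and the fact, already noted in the excerpt, that $\mathcal{A}_S^{\leq j}$ is an abelian, indeed Serre, subcategory, hence inherits Noetherianity. Ingredient (ii) is where the positivity results on the coefficients of $L_E(n)$ are decisive: additivity of $L_{\bullet}$ on short exact sequences makes each $a_i,b_i$ additive, so the entries of $\vec\phi$, built as ratios of these coefficients twisted by $t_1,\dots,t_l$, interpolate between those of $A$ and $B$; the positivity statements guarantee that the relevant leading coefficients are strictly positive, so those ratios are well defined and the sign comparisons in the see-saw inequality go through. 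The degenerate case, in which $A$ or $B$ has Hilbert polynomial of strictly smaller degree and some leading coefficient vanishes, is handled by placing lower-degree objects at the bottom of the order, consistent with the nested filtration $0=E_0\subset\cdots\subset E_r=E$ by the $\mathcal{A}_S^{\leq i}$; this is also exactly where the hypothesis $l\le j$ is used, so that one never reads off more coefficients than are available.

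For (iii) I would combine Noetherianity with the discreteness of $\mathrm{Im}(Z)$. Discreteness of $\mathrm{Im}(Z)$ forces each $a_i,b_i$ to take values in a discrete subset of $\mathbb{R}$, and the positivity results bound the values of these coefficients on subobjects of $E$ (they are sandwiched between $0$ and the corresponding coefficient of $L_E$), so the set of phase vectors realized by subobjects of $E$ is finite; in particular it has no infinite strictly increasing chain, and Bridgeland's argument then produces the maximal-phase subobject and prevents the HN recursion from running forever. The induction on $l$ packages everything cleanly: for $l=1$ the phase is a single real number and the above is ordinary weak-stability HN existence; for the inductive step one takes the level-$1$ HN filtration of $E$ and refines each of its ($\phi^{(1)}$-constant) semistable factors — which lie in a smaller subcategory $\mathcal{A}_S^{\leq j'}$ — by its level-$(l-1)$ filtration with respect to $t_2,\dots,t_l$, then checks that concatenation preserves strict lexicographic decrease and that the factors so obtained are precisely the $l$-th level semistable ones.

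The step I expect to be the main obstacle is ingredient (ii) in its degenerate form — fixing the right ordering convention for objects whose Hilbert polynomial drops in degree and checking that the see-saw inequality genuinely survives there — together with making the chain condition (iii) fully rigorous. Both rest on having the positivity statements for the coefficients of $L_E(n)$ in sufficiently sharp form, so proving those lemmas in the precise shape needed is really the heart of the argument.
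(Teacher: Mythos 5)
Your second, inductive sketch (refine each factor of the level-$1$ HN filtration by a level-$(l-1)$ filtration) is essentially the route the paper takes, but your primary strategy --- a one-shot HN argument for an $(\mathbb{R}^l,\mathrm{lex})$-valued phase with a global see-saw property on $\mathcal{A}_S^{\leq j}$ --- has a genuine gap: the higher coordinates of the phase vector are not defined on all of $\mathcal{A}_S^{\leq j}$. The level-$k$ central charge $a_{j-k}t_{k+1}-b_{j-k-1}+ib_j$ is a weak stability function only on the subcategory of objects that are quasi-semistable of a fixed phase $(\phi_1,\dots,\phi_{k})$ at the previous levels (Lemma \ref{slice stability condition}); Proposition \ref{positive coefficients} controls the signs of $a_{j-k}$ and $b_{j-k-1}$ only under the vanishing of all higher coefficients, so on a general short exact sequence in $\mathcal{A}_S^{\leq j}$ your ingredient (ii) simply fails. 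The missing idea is the ``abelianizer'' machinery of Section \ref{Abelian subcategories}: one must prove that the full subcategory of quasi-semistable objects of a fixed phase (Definition \ref{definition of abelianizer}) is abelian (Proposition \ref{abelianizer}) --- this is the main technical work --- and only then can one iterate the construction of weak stability conditions and speak of the higher phases at all. Relatedly, your identification of the ambient category for the refinement step as a smaller $\mathcal{A}_S^{\leq j'}$ is not correct: the level-$1$ semistable factors are refined inside $\mathcal{A}^{t_1}(\phi_1)$, not inside a lower-degree piece, and the hypothesis $l\le j$ enters through the count of available coefficients in Lemma \ref{slice stability condition}, not through a degree drop.

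Two further points of friction. First, your convention of placing degenerate objects (vanishing leading coefficients, zero central charge) ``at the bottom of the order'' is the opposite of the paper's: such objects have slope $+\infty$, lie in $\mathcal{P}_{\sigma}(1)$, and appear at the top of HN filtrations; reversing this would break both the see-saw inequality and the Hom-vanishing you invoke for uniqueness. Second, the paper's uniqueness argument does not use a Hom-vanishing for the full lexicographic phase; it partitions the factors by their first coordinate, identifies the coarsened filtration with the unique HN filtration for $\sigma^{t_1}$, and recurses on the remaining coordinates. Your Hom-vanishing route could be made to work, but only after one knows that morphisms between $l$-th level semistable objects with equal lower phases can be analyzed inside the abelian categories $\mathcal{A}^{t_1,\dots,t_k}_{\phi_1,\dots,\phi_k}$, which again returns you to Proposition \ref{abelianizer}.
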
	

Given $\vec{t}=(t_1,\cdots,t_{r+1})\in\mathbb{Q}_{>0}^{r+1}$ and a phase vector $\vec{\phi}=(\phi_1,\cdots,\phi_{r+1})\in(0,1)^{r+1}$, using this theorem, we can construct torsion pairs on $\mathcal{A}_S$:$$\mathcal{T}_{\vec{\phi}}^{\vec{t}}\coloneqq \{E\in\mathcal{A}_S|all \ the \ factors \ in \ theorem\ \ref{lexi-order filtration in introduction}\ have \ phases \ > \vec{\phi}\},$$

$$\mathcal{F}_{\vec{\phi}}^{\vec{t}}\coloneqq\{E\in\mathcal{A}_S|all \ the \ factors \ in \ theorem\ \ref{lexi-order filtration in introduction}\ have \ phases \leq \vec{\phi}\}.$$

In the end of this paper, we state some positivity results on the tilted heart $\mathcal{A}_{S,\vec{\phi}}^{\vec{t}}\coloneqq \langle \mathcal{F}_{\vec{\phi}}^{\vec{t}}[1], \mathcal{T}_{\vec{\phi}}^{\vec{t}}\rangle$ (see Theorem \ref{positivity on tilted heart}).
	
	\subsection*{Outline of the paper} In Section \ref{BSC}, we review some basic definitions and results in the theory of stability conditions. In Section \ref{Preliminary results}, we review some necessary results from \cite{APsheaves}, \cite{polishchuk2007constant} and \cite{Stabilityconditionsonproductvarieties}. In Section \ref{Abelian subcategories}, we study some abelian subcategories on $\mathcal{A}_S$ and state some general results on weak stability conditions. In Section \ref{torsion pairs}, we apply the results from previous section on $\mathcal{A}_S$. As a result, we define the notion of $l$-th level stability and prove the existence and uniqueness of lexicographic order filtration. Moreover, we use the filtration to construct torsion pairs of $\mathcal{A}_S$ and show some positivity results on the tilted hearts.
		\subsection*{Notation and Conventions} In this paper, we work over an algebraically closed field in arbitrary characteristic. All varieties are integral separated algebraic schemes of finite type over such a field $k$. We will use $D(X)$ rather than the usual notation $D^b(cohX)$ to denote the bounded derived category of coherent sheaves on $X$. We use $\mathbb{H}$ to denote the upper half plane, i.e., the set of complex numbers with positive imaginary part. All categories are assumed to be essentially small in this paper. 
	
	\subsection*{Acknowledgement} I would like to thank Hanfei Guo, Emanuele Macr\`i, Hao Sun, Zhiyu Tian, and Qizheng Yin for helpful discussions. I am very grateful to the referee for a careful reading of the manuscript and many helpful comments. The author is financially supported by NSFC grant 12201011.
	
		\section{Bridgeland Stability conditions}\label{BSC}

		In this section, we review the definition and some basic results on (weak) stability conditions (See \cite{beilinson1982faisceaux}, \cite{polishchuk2007constant}, \cite{bridgeland2008stability}, \cite{kontsevich2008stability} and \cite{baye2011bridgeland}).
		
		\begin{definition}
			Let $\mathcal{D}$ be an triangulated category. A $t$-structure on $\mathcal{D}$ is a pair of full subcategories $(\mathcal{D}^{\leq 0},\mathcal{D}^{\geq 0})$ satisfying the conditions (i)-(iii) below. We denote $\mathcal{D}^{\leq n}=\mathcal{D}^{\leq 0}[-n]$, $\mathcal{D}^{\geq n}=\mathcal{D}^{\geq 0}[-n]$ for every $n\in\mathbb{Z}$. Then the conditions are:
			
			(i) $Hom(X,Y)=0$ for every $X\in\mathcal{D}^{\leq 0}$ and $Y\in\mathcal{D}^{\geq 1}$;
			
			(ii) $\mathcal{D}^{\leq -1}\subset \mathcal{D}^{\leq 0}$ and  $\mathcal{D}^{\geq 1}\subset \mathcal{D}^{\geq 0}$.
			
			(iii) every object $X\in\mathcal{D}$ fits into an exact triangle 
			
			$$\tau^{\leq 0}X\rightarrow X\rightarrow \tau^{\geq 1}X\rightarrow \cdots$$ with $\tau^{\leq 0}X\in\mathcal{D}^{\leq 0}$, $\tau^{\geq 1}X\in\mathcal{D}^{\geq 1}$.
			
		It is not hard to see that the objects $\tau^{\leq 0}X$ and $\tau^{\geq 0}X$ are defined functorially. The functors $\tau^{\leq 0}$ and $\tau^{\geq 1}\coloneqq \tau^{\geq 0}[-1]$ are called truncation functors (see e.g. \cite[Proposition 8.1.8]{hotta2007d} for the basic properties of truncation functors). 	The heart of a $t$-structure is $\mathcal{A}=\mathcal{D}^{\leq 0}\cap\mathcal{D}^{\geq 0}$. It is an abelian category (see e.g. \cite[Theorem 8.1.9]{hotta2007d}). We have the associated cohomology functors defined by $H^0=\tau^{\leq 0}\tau^{\geq 0}:\mathcal{D}\rightarrow\mathcal{A}$, $H^i(X)\coloneqq H^0(X[i])$. 
		\end{definition}
		Motivated by the work from String Theory (see e.g. \cite{douglas2002dirichlet}), the notion of t-structures was refined by Bridgeland to the following one in \cite{bridgeland2007stability}.
		
		\begin{definition}\label{slicing}
			A slicing on a triangulated category $\mathcal{D}$ consists of full subcategories $\mathcal{P}(\phi)\in\mathcal{D}$ for each $\phi\in\mathbb{R}$, satisfying the following axioms:

			\par
			(a) for all $\phi \in \mathbb{R}$, $\mathcal{P}(\phi+1)=\mathcal{P}(\phi)[1]$,
			
			\par
			(b) if $\phi_1>\phi_2$ and $A_j\in\mathcal{P}(\phi_j)$ then $Hom_{\mathcal{D}}(A_1,A_2)=0$,
			
			\par

			(c) for every $0\neq E\in\mathcal{D}$ there is a sequence of real numbers
			
			$$\phi_1>\phi_2>\cdots>\phi_m$$and a sequence of morphisms 
			
			$$0=E_0\xrightarrow{f_1}E_1\xrightarrow{f_2} \cdots \xrightarrow{f_m}E_m=E $$such that the cone of $f_j$ is in $\mathcal{P}(\phi_j)$ for all $j$.
			
		\end{definition}
		
		This definition of slicings can be viewed as a refinement of t-structures in triangulated categories.
		
		\begin{definition}\label{first WSC}
			A stability condition on $\mathcal{D}$ consists of a pair $(\mathcal{P},Z)$, where $\mathcal{P}$ is a slicing and $Z:K_0(D)\rightarrow\mathbb{C}$ is a group homomorphism such that the following conditions are satisfied:
			\par

			(a) If $0\neq E\in\mathcal{P}(\phi)$ then $Z(E)=m(E)exp(i\pi\phi)$ for some $m(E)\in \mathbb{R}_{> 0}$.
			
			\par 
			
			(b) (Support property) The central charge $Z$ factors as $K_0(\mathcal{D})\xrightarrow{v} \Lambda\xrightarrow{g} \mathbb{C}$, where $\Lambda$ is a finite rank lattice, $v$ is a group homomorphism and $g$ is a group homomorphism, and there exists a quadratic form $Q$ on $\Lambda\otimes\mathbb{R}$ such that $Q|_{ker(g)}$ is negative definite, and $Q(v(E))\geq 0$, for any object $E\in\mathcal{P}(\phi)$. 
			
		\end{definition}

		\begin{remark}

			If we only require $m(E)$ to be nonnegative in (a), then the pair $(\mathcal{P},Z)$ is called a weak stability condition (see e.g. \cite[Definition 14.1]{Stabilityconditionsinfamilies}). By \cite[Lemma 2.2]{bridgeland2008stability}, there is a $S^1$ action on the space of stability conditions. Specifically, for any element $e^{i\theta}\in S^1$, $e^{i\theta}\cdot (Z,\mathcal{P})=(Z',\mathcal{P}')$ by setting $Z'=e^{i\theta} Z$ and $\mathcal{P}'(\phi)=\mathcal{P}(\phi-\theta)$.
		\end{remark}
		
		There is an equivalent way of defining a stability condition, which will be more frequently used in this paper. Firstly, we need to define what is a (weak) stability function $Z$ on an abelian category $\mathcal{A}$.
		\begin{definition} \label{weak stability condition}
			Let $\mathcal{A}$ be an abelian category. We call a group homomorphism $Z:K_0(\mathcal{A})\rightarrow \mathbb{C}$ a weak stability function on $\mathcal{A}$ if, for $E\in \mathcal{A}$, we have $Im(Z(E))\geq0$,with $Im(Z(E))=0 \implies  Re(Z(E))\leq0$. 
			If moreover, for $E\neq 0,$ $Im(Z(E))=0\implies Re(Z(E))< 0$, we say that $Z$ is a stability function.
		\end{definition}
		
		\begin{definition}\label{slicing }
			A (weak) stability condition on $\mathcal{D}$ is a pair $\sigma=(\mathcal{A},Z)$ consisting of the heart of a bounded t-structure $\mathcal{A}\subset\mathcal{D}$ and a (weak) stability function $Z:K_0(A)\rightarrow \mathbb{C}$ such that (a) and (b) below are satisfied:
			
			\par
			(a) (HN-filtration) The function $Z$ allows us to define a slope for any object $E$ in the heart $\mathcal{A}$ by
			
			$$\mu_{\sigma}(E):=\begin{cases} -\frac{Re(Z(E))}{Im(Z(E))}\ &\text{if} \  Im(Z(E))> 0,\\ +\infty &\text{otherwise.} \end{cases}$$

			The slope function gives a notion of stability: A nonzero object $E\in \mathcal{A}$ is $\sigma$ semistable  if for every proper subobject $F$, we have $\mu_{\sigma}(F)\leq  \mu_{\sigma}(E)$. Moreover, if  $\mu_{\sigma}(F)<  \mu_{\sigma}(E)$ holds for any proper subobject $F\subset E$, we say that $E$ is $\sigma$ stable.
			
			We require any nonzero object $E$ of $\mathcal{A}$ to have a Harder--Narasimhan filtration in $\sigma$ semistable ones, i.e., there exists a unique filtration $$0=E_0\subset E_1 \subset E_2\subset \cdots \subset E_{m-1} \subset E_m=E$$ such that $E_i/E_{i-1}$ is  $\sigma$ semistable and $\mu_{\sigma}(E_i/E_{i-1})>\mu_{\sigma}(E_{i+1}/E_i)$ for any $1\leq i\leq m$. The quotient $E_{i}/E_{i-1}$ is called the $i$-th HN factor of $E$.
			
			(b) (Support property) Equivalently as in Definition \ref{first WSC}, the central charge $Z$ factors as $K_0(\mathcal{D})\xrightarrow{v} \Lambda\xrightarrow{g} \mathbb{C}$. And there exists a quadratic form $Q$ on $\Lambda_{\mathbb{R}}$ such that $Q|_{ker(g)}$ is negative definite and $Q(v(E))\geq 0$ for any $\sigma$ semistable object $E\in\mathcal{A}$.
		\end{definition}

		\begin{remark}\label{abuse of terminology}
			  The equivalence of these two definitions is given by setting $\mathcal{P}(\phi)$ to be the full subcategory consisting of $\sigma$ semistable objects in $\mathcal{A}$. 
			
			Sometimes in this paper (especially in Section 5), a pair $(\mathcal{A},Z)$ of an abelian category $\mathcal{A}$ and a (weak) stability function $Z$ which only satisfies (a) will also be called a (weak) stability condition.
			
		\end{remark}
		 Although the following  result is well known, the author could not find a reference including the proof in full generality. So we include a proof here. 
		\begin{prop}
			If we have a pair $(\mathcal{A},Z)$, where $\mathcal{A}$ is an abelian category, and $Z:K_0(\mathcal{A})\rightarrow\mathbb{C}$ is a weak stability function. Assume that \begin{itemize}
				\item $\mathcal{A}$ is Noetherian,
				\item the image of $Im(Z)$ is discrete in $\mathbb{R}$.
				
			\end{itemize}
		Then Harder--Narasimhan filtrations exist in $\mathcal{A}$ with respect to $Z$.
		\end{prop}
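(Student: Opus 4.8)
The plan is to carry out the classical construction of Harder--Narasimhan filtrations, watching the points special to the weak setting. Write $\mu$ for the slope function attached to $Z$. Since $\operatorname{Im} Z\colon K_0(\mathcal{A})\to\mathbb{R}$ is a group homomorphism, its image is a subgroup of $\mathbb{R}$, so by the discreteness hypothesis it equals $\lambda\mathbb{Z}$ for some $\lambda\geq0$; if $\lambda=0$ then $\operatorname{Im} Z\equiv0$, every nonzero object is semistable of slope $+\infty$, and the trivial filtration works, so assume $\lambda>0$. Then $\operatorname{Im} Z$ takes values in $\lambda\mathbb{Z}_{\geq0}$ on $\mathcal{A}$, and only finitely many values on the subobjects of any fixed $E$.

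The construction has three steps. \emph{(i)} Every nonzero $E$ has a maximal destabilizing subobject: a $\mu$-semistable $E'\subseteq E$ with $\mu(E')=\max\{\mu(B):0\neq B\subseteq E\}$, which is moreover the largest subobject attaining this maximum. To produce $E'$ one first peels off the infinite-slope part: the subobjects of $E$ with $\operatorname{Im} Z=0$ are closed under sums (a sum being a quotient of a direct sum, and $\operatorname{Im} Z\geq0$), so Noetherianity gives a largest such $T$; if $T\neq0$ then $T=E'$, and maximality of $T$ forces $E/T$ to have no nonzero subobject with $\operatorname{Im} Z=0$. Otherwise every nonzero subobject of $E$ has $\operatorname{Im} Z>0$, hence finite slope; one shows $\mu^{+}(E):=\sup\{\mu(B):0\neq B\subseteq E\}$ is finite and attained, after which the subobjects of slope exactly $\mu^{+}(E)$ are closed under sums (a see-saw computation, valid since $\operatorname{Im} Z$ is positive throughout), so Noetherianity yields a largest one, which is $\mu$-semistable and maximal destabilizing. \emph{(ii)} Iterating the construction (put $E_1=E'$, let $E_2/E_1$ be the maximal destabilizing subobject of $E/E_1$, and so on) produces a strictly increasing chain $0=E_0\subsetneq E_1\subsetneq E_2\subsetneq\cdots$, which stabilizes by the Noetherian hypothesis, necessarily at $E$. \emph{(iii)} The whole infinite-slope contribution sits in $E_1$, and after the first step one stays in the reduced situation of (i); combining this with the see-saw property and the attainment in (i), one gets that the maximal destabilizing subobject $E_{i+1}/E_i$ of $E/E_i$ satisfies $\mu(E_{i+1}/E_i)<\mu(E_i/E_{i-1})$, which is the required Harder--Narasimhan filtration.

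The genuine obstacle is the finiteness and attainment of $\mu^{+}(E)$ in step (i) --- this is where both hypotheses are really used, since a priori nothing bounds the real parts of subobjects of $E$. I would argue by induction on $\operatorname{Im} Z(E)\in\lambda\mathbb{Z}_{\geq0}$. For a subobject $B$ with $0<\operatorname{Im} Z(B)<\operatorname{Im} Z(E)$, the inductive hypothesis applied to $B$ and to $E/B$ bounds $\operatorname{Re} Z$ on their subobjects; combining this with the see-saw inequality and the defining inequality of a weak stability function (Definition~\ref{weak stability condition}), applied to quotients with $\operatorname{Im} Z=0$, one obtains a finite upper bound for $\mu(C)$ over all $C$ with $B\subseteq C\subseteq E$. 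A sequence of subobjects whose slopes tend to $\mu^{+}(E)$ then, after forming suitable sums and using that only finitely many values of $\operatorname{Im} Z$ occur on subobjects of $E$, produces such a $C$ beating the bound, so $\mu^{+}(E)<\infty$; attainment follows by the same device with an ascending chain of sums of \emph{semistable} subobjects (obtained from arbitrary ones by repeatedly passing to a destabilizing subobject), whose slopes increase to $\mu^{+}(E)$ and which stabilizes by Noetherianity. The edge case $\operatorname{Im} Z(B)=\operatorname{Im} Z(E)$ is immediate, since then $\operatorname{Im} Z(E/B)=0$ gives $\operatorname{Re} Z(E/B)\leq0$ and hence $\mu(B)\leq\mu(E)$. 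An essentially equivalent but shorter route bypasses step (i): verify Bridgeland's two chain conditions --- no infinite chain $\cdots\subset E_{j+1}\subset E_j\subset\cdots$ of subobjects with $\mu(E_{j+1})>\mu(E_j)$ (the sequence $\operatorname{Im} Z(E_j)$ is bounded and decreasing in $\lambda\mathbb{Z}$, hence eventually constant, after which the weak stability inequality on the successive quotients makes the slopes non-increasing), and no infinite chain of quotients $E_1\twoheadrightarrow E_2\twoheadrightarrow\cdots$ with $\mu(E_j)>\mu(E_{j+1})$ (the kernels of $E_1\twoheadrightarrow E_j$ form an ascending chain of subobjects of $E_1$, stabilizing by Noetherianity) --- and then reproduce the proof of \cite[Proposition~2.4]{bridgeland2007stability}, which goes through verbatim for a weak stability function.
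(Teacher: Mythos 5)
Your route is genuinely different from the paper's. The paper follows the Harder--Narasimhan polygon argument of \cite[Proposition 4.10]{macri2017lectures}: it splits off the maximal subobject with vanishing central charge, forms the convex hull of $\{Z(F)\colon F\subset Q\}$, realizes each extremal vertex by an actual subobject through a limiting argument (discreteness of $\mathrm{Im}(Z)$, the sum/intersection exact sequence, Noetherianity), and then repairs the resulting factors --- which a priori are only quasi-semistable in the sense of Definition \ref{definition of abelianizer} --- by the pullback modification $\widetilde{Q}_i$. You instead construct the greatest destabilizing subobject directly and iterate, which is closer to the classical sheaf-theoretic argument; your first move, peeling off the maximal subobject with $\mathrm{Im}(Z)=0$, correctly handles the degeneracy specific to weak stability functions at that stage.

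There are, however, two concrete soft spots. First, the attainment of $\mu^{+}(E)$ is where all the work lies, and your treatment of it is circular: ``repeatedly passing to a destabilizing subobject'' terminates only if there is no infinite chain of subobjects with strictly increasing slopes, which is exactly the chain condition you verify only in your alternative route; and since only $\mathrm{Im}(Z)$ is assumed discrete, the slopes of a maximizing sequence need not stabilize on their own. This is the step the paper spends an entire paragraph on (the objects $Q_{j,i}$, their sums and intersections, and the convex-hull contradiction), and your sketch does not yet replace it. Second, the claim that the proof of \cite[Proposition 2.4]{bridgeland2007stability} ``goes through verbatim'' is not correct for a weak stability function: the see-saw property fails, because a nonzero subobject $A\subset E$ with $Z(A)=0$ has $\mu(A)=+\infty>\mu(E)$ while $\mu(E/A)=\mu(E)$. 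Consequently ``no destabilizing quotient,'' the notion underlying Bridgeland's maximal destabilizing quotient, is strictly weaker than the subobject-based semistability used here, and Bridgeland's construction only yields quasi-semistable factors. Closing that gap is precisely the role of Lemma \ref{switching lemma} and the final modification step in the paper's proof, so the shortcut you offer at the end needs the same repair before it can be cited as an alternative.
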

	\begin{proof}
		The proof is essentially the same as the proof of \cite[Proposition 4.10]{macri2017lectures}. The only difference is that we have to deal with the more general case when $Z$ is a weak stability function.
		
		Let $E\in\mathcal{A}$ be a nonzero object. By Noetherianity of $\mathcal{A}$ and Lemma \ref{subabelian}, one can find a maximal subobject $E_0\subset E$ among the subobjects $\{F\subset E|Z(F)=0\}$. If $E_0=E$, the statement is trivial as $E$ is semistable. Assume that the quotient object $Q\coloneqq E/E_0\in\mathcal{A}$ is nonzero. By $\mathcal{H}'(Q)$ we denote the convex hull of the set of all $Z(F)$ for $F\subset Q$. By \cite[Lemma 4.9]{macri2017lectures} (one can easily go through the proof to see that this lemma also holds even if $Z$ is just a weak stability condition), this set is bounded from the left. By $\mathcal{H}_l$ we denote the half plane to the left of the line between $Z(Q)$ and $0$. If $Q$ is semistable,  the short exact sequence $$0\rightarrow E_0\rightarrow E\rightarrow Q\rightarrow 0$$ provides us the Harder--Narasimhan filtration of $E$ when $\mu(Q)<\infty$, and $E$ itself is semistable when $\mu(Q)=\infty$. Otherwise, the set $\mathcal{H}(Q)=\mathcal{H}'(Q)\cap \mathcal{H}_l$ is a convex polygon. 
	
	Let $v_0=0,v_1,\cdots, v_n=Z(Q)$ be the extremal vertices of $\mathcal{H}(Q)$, we claim that there exist subobjects $Q_i\subset Q$ with $Z(Q_i)=v_i$ for $i=1,\cdots, n-1$. Indeed, by definition of $\mathcal{H}(Q)$ there is  a sequence of objects $Q_{j,i}\subset Q$ such that $$\varinjlim_{j\to\infty} Z(Q_{j,i})=v_i.$$ Since $Im(Z)$ has discrete image, we can assume $Im(Z(Q_{j,i}))=Im(v_i)$ for all $j$. For any $\epsilon>0$, we can find $N\gg 0$ such that $|Z(Q_{j,i})-v_i|<\epsilon$ for any $j\geq N$. Consider the following short exact sequence $$0\rightarrow Q_{j_1,i}\cap Q_{j_2,i}\rightarrow  Q_{j_1,i} \oplus  Q_{j_2,i}  \rightarrow Q_{j_1,i}+ Q_{j_2,i}\rightarrow 0,$$ where $Q_{j_1,i}\cap Q_{j_2,i}$ and $Q_{j_1,i}+ Q_{j_2,i}$ are the intersection and sum of $Q_{j_1,i}, Q_{j_2,i}$  inside $Q$.  By the definition of weak stability functions, we have $Im(Z(Q_{j_1,i}+ Q_{j_2,i}))\geq Im(v_i)$ as $Q_{j_1},Q_{j_2}$ are subobjects of $Q_{j_1,i}+ Q_{j_2,i}$. Assume that $Im(Z(Q_{j_1,i}+ Q_{j_2,i}))>Im(v_i)$ for contrary,  we get that $Im(Z(Q_{j_1,i}\cap Q_{j_2,i}))<Im(v_i)$. If we take $\epsilon$ to be small enough, the segment connecting $Z(Q_{j_1,i}\cap Q_{j_2,i})$ and $Z(Q_{j_1,i}+ Q_{j_2,i})$ would be very close to the point $v_i$, and one of the points $Z(Q_{j_1,i}\cap Q_{j_2,i})$, $Z(Q_{j_1,i}+ Q_{j_2,i})$ would lie outside the convex hull $\mathcal{H}'(Q)$, contradicting the definition  of $\mathcal{H}'(Q)$.

	Hence, we can assume that $Im(Z(Q_{j_1,i}+ Q_{j_2,i}))=Im(v_i)$ for all $j_1, j_2\gg 0$. Moreover, as $Q_{j_1,i},Q_{j_2,i}$ are subobjects of $Q_{j_1,i}+Q_{j_2,i}$, we have that $$Re(v_i)\leq Re(Z(Q_{j_1,i}+Q_{j_2,i})\leq min\{Re(Z(Q_{j_1,i})), Re(Z(Q_{j_2,i}))\}$$ by Definition \ref{weak stability condition}. For a given $N\gg 0$, we  replace $Q_{j,i}$ by $Q_{j,i}+\cdots +Q_{N+1,i}+Q_{N,i}$ for any $j>N$. After this replacement, we can assume that $Q_{N,i}\subset Q_{N+1,i}\subset \cdots \subset Q$, and $$\varinjlim_{j\to\infty} Z(Q_{j,i})=v_i.$$ By Noetherianity of $\mathcal{A}$, we get a subobject $Q_i\subset Q$ with $Z(Q_i)=v_i$. Hence, the claim is proved. 
	
	Next, we will modify $Q_i$ a little bit so that the following conditions hold. \begin{enumerate}
		\item The inclusion $Q_{i-1}\subset Q_i$ holds for all $i=1,\cdots,n$.
		\item The object $G_i\coloneqq Q_i/Q_{i-1}$ is semistable for all $i=1,\cdots, n$.
	\end{enumerate}
Same as the proof of \cite[Proposition 4.10]{macri2017lectures}, one can show that $Z(Q_{i-1}\cap Q_i)=Z(Q_{i-1})$ and $Z(Q_{i-1}+Q_i)=Z(Q_i)$. Hence, we can replace $Q_i$ by $Q_1+\cdots +Q_i$ and get the condition (1). 

Similar to the proof of \cite[Proposition 4.10]{macri2017lectures}, one can show that the quotient object $G_i$ is quasi-semistable (see Definition \ref{abelianizer}) for all $i=1,\cdots n$, i.e., if we take $K_i$ to be the maximal subobject of $G_i$ among the sobobjects with trivial central charge, the quotient object $G_i/K_i$ will be semistable. We can modify $Q_i$ in the following way. We take $\widetilde{Q}_i$ to be the pullback of $K_{i+1}$ along the surjection $Q_{i+1}\rightarrow G_{i+1}$, as in the following commutative diagram. \[\begin{tikzcd}
		0 \arrow[r] & Q_i \arrow [r] \arrow[d,"id"]& \widetilde{Q}_i \arrow[r] \arrow[d]& K_{i+1}\arrow[r]\arrow[d] & 0 \\ 0\arrow[r]& Q_i\arrow[r] & Q_{i+1} \arrow[r]\arrow[d] &G_{i+1} \arrow[r] \arrow[d]& 0 \\ & & G_{i+1}/K_{i+1}\arrow[r, "id"] & G_{i+1}/K_{i+1} & 
	\end{tikzcd}\]
Each row and column in this diagram is a short exact sequence. If we modify $$Q_{n-1}, Q_{n-2},\cdots Q_1$$ inductively using this construction, we get a sequence of $Q_1\subset Q_2\subset\cdots \subset Q_{n-1}\subset Q$ with condition (2) holds. Hence, if we take $E_i$ to be the pullback of $Q_i$ along the surjection $E\rightarrow Q$ for any $1\leq i\leq n$, the sequence $E_0\subset E_1 \subset \cdots\subset E_{n-1}\subset  E$ is a Harder--Narasimhan filtration of $E$ if $\mu(E_1)<+\infty$. In the case $\mu(E_1)=+\infty$, the sequence $E_1 \subset \cdots\subset E_{n-1}\subset  E$ is a Harder--Narasimhan filtration of $E$. 
	\end{proof}
	\begin{remark}
		Although we do not ask for the uniqueness of Harder--Narasimhan filtration in this proposition, the uniqueness follows automatically from the definition of semistable objects. In fact, the proof of \cite[Lemma 1.3.7]{HarderNarasimhanfiltration} can be easily generalized to our case.
	\end{remark}
		There is an important operation called tilting with respect to a torsion pair, which is very useful for constructing stability conditions.
		\begin{definition}
			A torsion pair in an abelian category $\mathcal{A}$ is a pair of full subcategories $(\mathcal{T},\mathcal{F})$ of $\mathcal{A}$ which satisfy $\mathrm{Hom}_{\mathcal{A}}(T,F)=0$ for $T\in\mathcal{T}$ and $F\in\mathcal{F}$, and such that every object $E\in\mathcal{A}$ fits into a short exact sequence $$0\rightarrow T\rightarrow E\rightarrow F\rightarrow 0$$
			for some pair of objects $T\in\mathcal{T}$ and $F\in\mathcal{F}$.
		\end{definition}
		\begin{remark}\label{Torsion pair remark}
			In this paper, most torsion pairs are coming from weak stability conditions $\sigma=(\mathcal{A},Z)$. In fact, let $$\mathcal{T}=\{E\in\mathcal{A}\mid \mu_{\sigma,min}(E)>0\}\ and \ \mathcal{F}=\{E\in\mathcal{A}\mid \mu_{\sigma,max}(E)\leq 0\}$$ be a pair of full subcategories, where $\mu_{\sigma,min}(E)$ is the slope of the last HN-factor of $E$ and $\mu_{\sigma,max}(E)$ is the slope of the first HN-factor of $E$. It is easy to see that this is a torsion pair.
		\end{remark}
		\begin{lemma}[{\cite[Proposition 2.1]{happel1996tilting}}]\label{Tilt of torsion pairs}
			Suppose $\mathcal{A}$ is the heart of a bounded t-structure on a triangulated category $\mathcal{D}$, $(\mathcal{T},\mathcal{F})$ is a torsion pair in $\mathcal{A}$. Then $\mathcal{A}^{\#}=\langle\mathcal{F}[1], \mathcal{T}\rangle$ is a heart of a bounded t-structure on $\mathcal{D}$.
		\end{lemma}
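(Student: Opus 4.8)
\emph{Proof proposal.} This is the Happel–Reiten–Smalø tilting theorem, and I would reprove it by writing down the candidate t-structure explicitly in terms of the cohomology functors $H^i=H^i_{\mathcal{A}}$ of the given bounded t-structure and then checking the three axioms. Let
\begin{align*}
\mathcal{D}^{\#,\leq 0} &= \{E\in\mathcal{D} \mid H^i(E)=0 \text{ for } i>0,\ H^0(E)\in\mathcal{T}\},\\
\mathcal{D}^{\#,\geq 0} &= \{E\in\mathcal{D} \mid H^i(E)=0 \text{ for } i<-1,\ H^{-1}(E)\in\mathcal{F}\}.
\end{align*}
From these descriptions $\mathcal{A}^{\#}:=\mathcal{D}^{\#,\leq 0}\cap\mathcal{D}^{\#,\geq 0}$ consists exactly of the objects $E$ with $H^{-1}(E)\in\mathcal{F}$, $H^{0}(E)\in\mathcal{T}$ and $H^i(E)=0$ otherwise; the canonical truncation triangle $H^{-1}(E)[1]\to E\to H^0(E)\to H^{-1}(E)[2]$ then identifies this intersection with the extension-closed subcategory $\langle\mathcal{F}[1],\mathcal{T}\rangle$ (one direction is the triangle just written, the other is the cohomology long exact sequence applied to an extension of an object of $\mathcal{T}$ by one of $\mathcal{F}[1]$). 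The nesting axiom $\mathcal{D}^{\#,\leq -1}\subset\mathcal{D}^{\#,\leq 0}$, $\mathcal{D}^{\#,\geq 1}\subset\mathcal{D}^{\#,\geq 0}$ is immediate, since a shift only moves cohomology into more negative degrees (in particular $0\in\mathcal{T}\cap\mathcal{F}$).

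For the orthogonality axiom, let $X\in\mathcal{D}^{\#,\leq 0}$ and $Y\in\mathcal{D}^{\#,\geq 1}$; note $Y[1]\in\mathcal{D}^{\#,\geq 0}$, so $Y$ has $H^i(Y)=0$ for $i<0$ and $H^0(Y)\in\mathcal{F}$. Applying $\mathrm{Hom}_{\mathcal{D}}(-,Y)$ to the triangle $\tau^{\leq-1}X\to X\to H^0(X)\to$ (truncation for the original t-structure) and then $\mathrm{Hom}_{\mathcal{D}}(H^0(X),-)$ to the triangle $H^0(Y)\to Y\to\tau^{\geq1}Y\to$, every term that arises vanishes for degree reasons using $\mathrm{Hom}_{\mathcal{D}}(\mathcal{D}^{\leq a},\mathcal{D}^{\geq b})=0$ for $a<b$, except for $\mathrm{Hom}_{\mathcal{D}}(H^0(X),H^0(Y))=\mathrm{Hom}_{\mathcal{A}}(H^0(X),H^0(Y))$. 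Since $H^0(X)\in\mathcal{T}$ and $H^0(Y)\in\mathcal{F}$, this last group is $0$ by the definition of a torsion pair, hence $\mathrm{Hom}_{\mathcal{D}}(X,Y)=0$.

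The existence of truncation triangles is where the real work lies, and the octahedral bookkeeping is the step I expect to be the main obstacle. Given $E\in\mathcal{D}$, I would start from the triangle $\tau^{\leq0}E\to E\to\tau^{\geq1}E\to$, apply the torsion pair to $H^0(E)\in\mathcal{A}$ to get $0\to T\to H^0(E)\to F\to 0$, and define $A$ as the fiber of the composite $\tau^{\leq0}E\to H^0(E)\to F$. The cohomology long exact sequence of $A\to\tau^{\leq0}E\to F[0]\to$ gives $H^i(A)=H^i(E)$ for $i\leq-1$, $H^0(A)=\ker(H^0(E)\to F)=T\in\mathcal{T}$, and $H^i(A)=0$ for $i\geq1$, so $A\in\mathcal{D}^{\#,\leq0}$. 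Applying the octahedral axiom to $A\to\tau^{\leq0}E\to E$ yields a triangle $A\to E\to B\to A[1]$ together with a triangle $F[0]\to B\to\tau^{\geq1}E\to F[1]$; a second cohomology computation gives $H^i(B)=0$ for $i\leq-1$, $H^0(B)=F\in\mathcal{F}$, and $H^i(B)=H^i(E)$ for $i\geq1$, so $B\in\mathcal{D}^{\#,\geq1}$. This is the desired truncation triangle. Finally, boundedness of $(\mathcal{D}^{\#,\leq0},\mathcal{D}^{\#,\geq0})$ is inherited from that of the original t-structure, since each object of $\mathcal{D}$ has only finitely many nonzero $H^i$. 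Therefore $\mathcal{A}^{\#}$ is the heart of a bounded t-structure on $\mathcal{D}$, and we have already identified it with $\langle\mathcal{F}[1],\mathcal{T}\rangle$.
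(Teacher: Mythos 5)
Your proposal is correct: this is precisely the standard Happel--Reiten--Smal\o{} argument, and the paper itself gives no proof, simply citing \cite[Proposition~2.1]{happel1996tilting}, whose proof follows the same route you take (explicit description of the tilted aisles via the cohomology functors, orthogonality reduced to $\mathrm{Hom}_{\mathcal{A}}(\mathcal{T},\mathcal{F})=0$, and the truncation triangle built from the torsion decomposition of $H^0(E)$ together with the octahedral axiom). Your cohomology computations for $A$ and $B$ and the identification of the intersection of the aisles with $\langle\mathcal{F}[1],\mathcal{T}\rangle$ are all accurate, so nothing needs to be added.
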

		
		In this paper, we are interested in the case when $\mathcal{D}$ is the bounded derived category of coherent sheaves on an algebraic variety $X$ or an admissible component of it. From now on, $X$ will be a smooth projective variety over an algebraically closed field $k$, and $D(X)$ will be the bounded derived category of coherent sheaves on $X$.

	\section{AP heart and complexified Hilbert polynomial}\label{Preliminary results}
	In this section, we will recall some results from \cite{APsheaves}, \cite{polishchuk2007constant}, \cite{bayer2014mmp} and \cite{Stabilityconditionsonproductvarieties}. We will work under the following setup in the rest of this paper. 
	
	\textbf{Setup}: Suppose $X$ and $S$ are smooth projective varieties, and $\sigma=(\mathcal{A},Z)$ is a stability condition on $D(X)$, where $\mathcal{A}$ is Noetherian and the image of $Z$ is discrete. The heart $\mathcal{A}$ corresponds to a bounded $t$-structure $(D^{\leq 0}(X), D^{\geq 0}(X))$ on $D(X)$.

	For any t-structure  $(D^{\leq 0}(X), D^{\geq 0}(X))$ on $D(X)$, we have the following theorem. 
	
	\begin{theorem}[{\cite[Theorem 3.3.6]{polishchuk2007constant}}]
		
		Suppose $S$ is a projective variety of dimension $r$, and $\mathcal{O}(1)$ is an ample line bundle on $S$. There exists a global t-structure on $D(X\times S)$ defined as 
		$$D^{[a,b]}(X\times S)=\{E\in D(X\times S)\mid \textbf{R}p_*(E \otimes q^*(\mathcal{O}(n)))\in D^{[a,b]}(X) \ for\ all \ n\gg 0\}.$$
		
		Here $p,q$ are the projections from $X\times S$ to $X$ and $S$ respectively,  $a<b$ are integers  and can be infinite. Moreover, the global heart $$\mathcal{A}_S=D^{\leq 0}(X\times S)\cap D^{\geq 0}(X\times S)$$ is Noetherian and independent of the choice of $\mathcal{O}(1)$.

	\end{theorem}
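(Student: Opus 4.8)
The plan is to reconstruct the Abramovich--Polishchuk argument, built around two ingredients: a Beilinson-type reconstruction that controls objects of $D(X\times S)$ in terms of their twisted pushforwards $\mathbf{R}p_*(-\otimes q^*\mathcal{O}(n))$ for $n\gg0$, and relative Serre vanishing. One can either verify the $t$-structure axioms directly from the displayed formula (this is \cite{APsheaves}, for $S$ projective) or first build a polarization-free ``constant family of $t$-structures'' by gluing over an affine cover of $S$ and then identify it with the displayed formula when $S$ is projective (this is \cite{polishchuk2007constant}); I would follow the first route and read off independence of $\mathcal{O}(1)$ afterwards.

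\textbf{Reconstruction.} Fix a closed embedding $\iota\colon S\hookrightarrow\mathbb{P}^N$ given by a power of $\mathcal{O}(1)$ and set $\bar{E}=(\mathrm{id}_X\times\iota)_*E$ on $X\times\mathbb{P}^N$. By the projection formula $\mathbf{R}p_*(E\otimes q^*\mathcal{O}(n))=\mathbf{R}\bar{p}_*(\bar{E}\otimes\bar{q}^*\mathcal{O}_{\mathbb{P}^N}(n))$, so the defining condition on $X\times S$ is the corresponding condition on $X\times\mathbb{P}^N$. Convolving with the pulled-back Beilinson resolution of the diagonal of $\mathbb{P}^N$, and using that each $\Omega^i_{\mathbb{P}^N}(i)$ is quasi-isomorphic to a complex of non-negative twists of $\mathcal{O}$ (iterated Euler sequence), one obtains for every $m\gg0$ a functorial finite Postnikov tower on $\bar{E}\otimes\bar{q}^*\mathcal{O}(m)$ whose graded pieces are $\bar{p}^*C_n\otimes\bar{q}^*\mathcal{O}(j)$, where $C_n=\mathbf{R}p_*(E\otimes q^*\mathcal{O}(n))$ for $n$ in a bounded set of integers all $\gg0$ and $0\le j\le N$. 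The defining condition is unchanged by the twist $\mathcal{O}(m)$ (it quantifies over all $n\gg0$), is stable under shifts and cones, and for a single piece $\bar{p}^*C\otimes\bar{q}^*\mathcal{O}(j)$ one checks directly (projection formula and relative Serre vanishing) that it lies in $D^{[a,b]}(X\times S)$ if and only if $C\in D^{[a,b]}(X)$. Hence via the tower, membership of $E$ in $D^{[a,b]}(X\times S)$ can be tested on the objects $C_n\in D(X)$.

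\textbf{The axioms.} Nesting and the stability of $D^{\le0}(X\times S)$ under direct summands are immediate from the corresponding facts on $D(X)$. For $\mathrm{Hom}_{X\times S}(A,B)=0$ with $A\in D^{\le0}$, $B\in D^{\ge1}$: tensoring by $q^*\mathcal{O}(m)$ is an equivalence, so it suffices to kill $\mathrm{Hom}(\bar{A}\otimes\bar{q}^*\mathcal{O}(m),\,\bar{B}\otimes\bar{q}^*\mathcal{O}(m))$ for some $m\gg0$; expanding $\bar{A}\otimes\bar{q}^*\mathcal{O}(m)$ by the tower and using adjunction reduces this to groups $\mathrm{Hom}_X\big(C_n,\,\mathbf{R}p_*(B\otimes q^*\mathcal{O}(m-j))\big)$, which vanish because $C_n\in D^{\le0}(X)$ while $\mathbf{R}p_*(B\otimes q^*\mathcal{O}(m-j))\in D^{\ge1}(X)$ for $m\gg0$. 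The existence of truncation triangles is the crux. Given $E$, one would replace each $C_n=\mathbf{R}p_*(E\otimes q^*\mathcal{O}(n))$ in the tower on $\bar{E}\otimes\bar{q}^*\mathcal{O}(m)$ by its truncation $\tau^{\le0}_X C_n$ relative to the $t$-structure on $D(X)$, reassemble the truncated pieces into an object $F$, and show that the induced map $F\to\bar{E}\otimes\bar{q}^*\mathcal{O}(m)$ has $F\in D^{\le0}$ and cone in $D^{\ge1}$; untwisting (and checking the result lands in the image of $(\mathrm{id}_X\times\iota)_*$, so descends over $S$) then yields $\tau^{\le0}E\to E$. The delicate point is precisely that the Beilinson differentials are compatible with the truncations $\tau^{\le0}_X$ --- equivalently, that the whole construction descends to an honest $t$-structure, also for a non-affine non-projective base --- and this is where the Noetherian induction of \cite{APsheaves,polishchuk2007constant} enters; I would take this step as given.

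\textbf{Noetherianity and independence.} For $E\in\mathcal{A}_S$ one has $\mathbf{R}p_*(E\otimes q^*\mathcal{O}(n))\in\mathcal{A}$ for $n\gg0$, straight from the definition of $\mathcal{A}_S$. Given an ascending chain $E_1\subseteq E_2\subseteq\cdots$ in $\mathcal{A}_S$ bounded by a fixed object, a relative boundedness argument supplies one $n_0$ with $\mathbf{R}p_*(E_i\otimes q^*\mathcal{O}(n))\in\mathcal{A}$ for all $i$ and all $n\ge n_0$; the long exact sequences from the inclusions $E_i\hookrightarrow E_{i+1}$ (whose quotients lie in $\mathcal{A}_S$) turn this into an ascending chain in $\mathcal{A}$ for each such $n$, which stabilizes since $\mathcal{A}$ is Noetherian. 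Hence $\mathbf{R}p_*\big((E_{i+1}/E_i)\otimes q^*\mathcal{O}(n)\big)=0$ for $i\gg0$ and all $n\gg0$, and by the displayed formula $E_{i+1}/E_i$ then lies in $D^{\le k}(X\times S)$ and in $D^{\ge k}(X\times S)$ for every $k$, so $E_{i+1}/E_i=0$ because a bounded $t$-structure is non-degenerate. Independence of $\mathcal{O}(1)$ is then formal: the $t$-structure built for a general base in \cite{polishchuk2007constant} does not refer to any polarization, while the argument above shows the displayed formula recovers that same $t$-structure for every ample $\mathcal{O}(1)$. The main obstacle throughout is the single step flagged above --- the compatibility of the Beilinson reconstruction with truncation, i.e.\ the actual construction of the truncation functors --- which is the genuinely technical heart carried out in \cite{APsheaves} and \cite{polishchuk2007constant}; everything else is formal manipulation with the Beilinson resolution, the projection formula, and relative Serre vanishing.
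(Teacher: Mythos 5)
The paper does not prove this statement at all: it is imported verbatim as \cite[Theorem 3.3.6]{polishchuk2007constant} (with the Noetherianity of $\mathcal{A}_S$ coming from \cite[Proposition 5.0.1]{APsheaves}), so there is no in-paper argument to compare yours against. Judged on its own, your write-up is a fair roadmap of the Abramovich--Polishchuk strategy --- push forward to $X\times\mathbb{P}^N$, convolve with the Beilinson resolution of the diagonal, test membership on the objects $\mathbf{R}p_*(E\otimes q^*\mathcal{O}(n))$, and use relative Serre vanishing --- but it is not a proof, and you say so yourself: the existence of the truncation triangles, i.e.\ that the Postnikov tower with each $C_n$ replaced by $\tau^{\le 0}_X C_n$ actually reassembles into an object $F\to E$ with $F\in D^{\le 0}$ and cone in $D^{\ge 1}$, is exactly the content of the theorem, and you ``take this step as given.'' Since axiom (iii) is the only nontrivial axiom here (the Hom-vanishing and nesting are, as you note, formal), deferring it means the argument establishes nothing beyond what the citation already provides. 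Note also that the cited source actually proceeds differently from the route you chose: Polishchuk first builds a polarization-independent $t$-structure by descent from the quasi-coherent/affine-cover picture and only afterwards identifies it with the displayed formula for projective $S$, which is also how the independence of $\mathcal{O}(1)$ is obtained without further work.

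Two more specific soft spots. In the Noetherianity paragraph, the existence of a single $n_0$ working simultaneously for all terms of the ascending chain is the heart of that argument, not a routine ``relative boundedness'' remark; in \cite{APsheaves} it rests on dedicated boundedness lemmas for objects close to the heart, and without it the chain in $\mathcal{A}$ for fixed $n$ need not control the chain in $\mathcal{A}_S$. And your reduction of $\Omega^i_{\mathbb{P}^N}(i)$ to nonnegative twists of $\mathcal{O}$ via iterated Euler sequences produces resolutions, not direct sum decompositions, so the ``graded pieces'' of your tower are themselves convolutions; this is harmless for the $D^{\le 0}$/$D^{\ge 1}$ membership tests (closed under extensions) but needs to be said when you claim the single-piece computation suffices. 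If you want a complete argument, the step to supply is precisely the functoriality of the truncated tower; otherwise the honest course is the one the paper takes, namely to cite the result.
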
 
	\begin{remark}
		This theorem can be viewed as a generalization of Serre's vanishing. In fact, if $\mathcal{A}=coh(X)$ is the abelian category of coherent sheaves on $X$, then $\mathcal{A}_S=coh(X\times S)$ is the abelian category of coherent sheaves on $X\times S$.
	\end{remark}
\begin{remark}\label{Complexified Hilber polynomials}
		In \cite{Stabilityconditionsonproductvarieties}, we observed that $Z(\textbf{R}p_*(E\otimes q^*\mathcal{O}(n)))$ is a polynomial of degree no more than $dim(S)=r$, whose leading coefficient is a weak stability function on $\mathcal{A}_S$. We denote this polynomial by $L_E(n)$ for any object $E$ in $\mathcal{A}_S$.
\end{remark}  We also have the following theorem.
	
	\begin{theorem}[{\cite[Theorem 3.3]{Stabilityconditionsonproductvarieties}}]\label{glabal weak stablity condition}
		Assume $S$ is a smooth projective variety of dimension $r$, we define $(\mathcal{A}_S, Z_S)$ as below:
		\\
		$$\mathcal{A}_S=\{E\in D(X\times S)\mid \textbf{R}p_*(E \otimes q^*(\mathcal{O}(n)))\in \mathcal{A}\  for\ all\ n\gg0 \}$$
		$$Z_S(E)=\lim_{n\rightarrow +\infty}\frac{Z(\textbf{R}p_*(E\otimes q^*(\mathcal{O}(n)))r!}{n^r vol{(\mathcal{O}(1))}},$$where $vol(\mathcal{O}(1))$ is the volume of $\mathcal{O}(1)$ (see \cite[Definition 2.2.31]{lazarsfeld2004positivity} for the definition of volume of  line bundle). Then this pair is a weak stability condition on $D(X\times S)$.
	\end{theorem}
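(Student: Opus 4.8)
The plan is to check, for the pair $(\mathcal{A}_S, Z_S)$, the three ingredients in the definition of a weak stability condition: that $\mathcal{A}_S$ is the heart of a bounded $t$-structure, that $Z_S$ is a weak stability function in the sense of Definition \ref{weak stability condition}, and that Harder--Narasimhan filtrations exist. The first point is immediate from the preceding theorem of Polishchuk: the category $\mathcal{A}_S$ described above is exactly the heart $D^{\leq 0}(X\times S)\cap D^{\geq 0}(X\times S)$ of the global $t$-structure attached to $(D^{\leq 0}(X),D^{\geq 0}(X))$, and that theorem also records that $\mathcal{A}_S$ is Noetherian, which I shall need for the last point.

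For the second point I would first note that the limit defining $Z_S(E)$ exists and computes the normalized leading coefficient of the complexified Hilbert polynomial $L_E(n)=Z(\textbf{R}p_*(E\otimes q^*\mathcal{O}(n)))$ from Remark \ref{Complexified Hilber polynomials}: writing $L_E(n)=\sum_{i}(a_i(E)+i\,b_i(E))n^i$, one has $Z_S(E)=\frac{r!}{\mathrm{vol}(\mathcal{O}(1))}(a_r(E)+i\,b_r(E))$. Since $\textbf{R}p_*(-\otimes q^*\mathcal{O}(n))$ and $Z$ are both additive on Grothendieck groups, the assignment $E\mapsto L_E$ descends to $K_0(D(X\times S))$ and is additive, so each coefficient and hence $Z_S$ is a group homomorphism. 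The inequalities making $Z_S$ a weak stability function are precisely the assertion of Remark \ref{Complexified Hilber polynomials} that the leading coefficient of $L_E$ is a weak stability function on $\mathcal{A}_S$; concretely, for $E\in\mathcal{A}_S$ the objects $F_n:=\textbf{R}p_*(E\otimes q^*\mathcal{O}(n))$ lie in $\mathcal{A}$ for $n\gg 0$, so $\mathrm{Im}\,L_E(n)=\mathrm{Im}\,Z(F_n)\geq 0$ for large $n$, forcing $b_r(E)\geq 0$, and the boundary implication $\mathrm{Im}\,Z_S(E)=0\Rightarrow \mathrm{Re}\,Z_S(E)\leq 0$ reflects the stability-function inequalities for $Z$ on the general-fibre restriction of $E$.

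For the third point I would invoke the existence result for Harder--Narasimhan filtrations proved in Section \ref{BSC}: as $\mathcal{A}_S$ is Noetherian and $Z_S$ is a weak stability function, it remains only to verify that the image of $\mathrm{Im}\,Z_S$ is discrete in $\mathbb{R}$. This is the one genuinely new computation. By hypothesis the image of $\mathrm{Im}\,Z$ is a discrete subgroup $\delta\mathbb{Z}\subset\mathbb{R}$. Applying the $r$-th finite-difference operator to the degree-$\leq r$ polynomial $n\mapsto \mathrm{Im}\,L_E(n)$ produces the constant $r!\,b_r(E)=\sum_{k=0}^{r}(-1)^{r-k}\binom{r}{k}\,\mathrm{Im}\,Z(F_{n+k})$ for any $n\gg 0$, an integer combination of values lying in $\delta\mathbb{Z}$; hence $r!\,b_r(E)\in\delta\mathbb{Z}$ and $\mathrm{Im}\,Z_S(E)\in\frac{\delta}{\mathrm{vol}(\mathcal{O}(1))}\mathbb{Z}$, which is discrete. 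The existence Proposition of Section \ref{BSC} then supplies the required Harder--Narasimhan filtrations with respect to $Z_S$.

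The conceptually deepest ingredient is the boundary inequality $b_r(E)=0\Rightarrow a_r(E)\leq 0$, which cannot be read off from $F_n\in\mathcal{A}$ alone: when $b_r(E)=0$ the lower imaginary coefficients of $L_E$ may be strictly positive, so the stability-function property of $Z$ imposes no constraint on $\mathrm{Re}\,Z(F_n)$. I would take this from Remark \ref{Complexified Hilber polynomials}; if one wished to prove it from scratch, the natural route is to identify the normalized leading coefficient of $L_E$ with the central charge of the restriction of $E$ to the generic fibre of $X\times S\to S$, which lies in the base-changed heart by compatibility of the AP construction with restriction, and to transfer the inequalities for $Z$ there. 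This is where the real work would lie; everything else is the assembly of the cited structural results together with the finite-difference discreteness computation above. Finally, should the support property (condition (b)) be required in the ambient definition, I would note per Remark \ref{abuse of terminology} that the paper's usage turns on property (a), and that a quadratic form for $Z_S$ can be pulled back from one witnessing the support property of $Z$.
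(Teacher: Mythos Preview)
The paper does not prove this theorem: it is quoted verbatim from the author's earlier paper \cite[Theorem~3.3]{Stabilityconditionsonproductvarieties} as a preliminary result, with no argument given here. So there is no ``paper's own proof'' to compare against.

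That said, your sketch is a reasonable reconstruction of how such a proof would go, and it lines up with the surrounding material the paper imports: Polishchuk's theorem supplies the heart and its Noetherianity; Remark~\ref{Complexified Hilber polynomials} records that $L_E(n)$ is a polynomial of degree $\le r$ whose top coefficient is a weak stability function (the implication $b_r(E)=0\Rightarrow a_r(E)\le 0$ you flag as the real content reappears in Proposition~\ref{positive coefficients}, itself cited from \cite{somequadraticinequalities}, and your generic-fibre idea is indeed the standard route); and the HN existence Proposition of Section~\ref{BSC} closes the loop once discreteness of $\mathrm{Im}\,Z_S$ is known. Your finite-difference argument for discreteness is correct and pleasant. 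One small caution: you assume $\mathrm{Im}\,Z$ itself has discrete image $\delta\mathbb{Z}$, whereas the Setup only says the image of $Z$ is discrete in $\mathbb{C}$, which does not automatically force discreteness of the imaginary projection; in practice the paper later restricts to rational stability conditions (Definition~\ref{rational stability condition}), where this is harmless, but you should state the extra hypothesis explicitly rather than derive it.
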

	
	\begin{example}
		If we take $X=Spec(\mathbb{C})$, $\mathcal{A}$ is the category of $\mathbb{C}$-vector spaces and $Z(V)=z\cdot dim(V)$ for any finite dimensional $\mathbb{C}$ vector space, where $z\in\mathbb{H}\cup\mathbb{R}_{\leq 0}$. Then the global heart is the category of coherent sheaves on $S$, and $L_E(n)=z\cdot Hilb_E(n)\ for\ n\gg 0$. Therefore, we call $L_E(n)$ the complexified Hilbert polynomial.
	\end{example}
	
	We get the following slope function $\mu_1$ from the pair $(\mathcal{A}_S,Z_S)$:
	
	$$\mu_1(E)\coloneqq\begin{cases} -\frac{Re(Z_S(E))}{Im(Z_S(E))} &\text{if} \ Im(Z_S(E))> 0,\\ +\infty &\text{otherwise.}\end{cases}$$
	
	This weak stability condition is closely related to the global slicing constructed in \cite[Section 4]{bayer2014mmp}. We include the explicit construction of the global slicing in the following paragraph. 
	
	Given a stability condition $\sigma=(Z, \mathcal{P})$ on $D(X)$ and a phase $\phi\in\mathbb{R}$, we have its associated t-structure $$(\mathcal{P}(>\phi)=\mathcal{D}_{\phi}^{\leq -1}(X) ,\mathcal{P}(\leq\phi)=\mathcal{D}_{\phi}^{\geq 0}(X))$$ on $D(X)$. By Abramovich and Polishchuk's construction, we get a global t-structure $$(\mathcal{P}_S(>\phi)\coloneqq\mathcal{D}_{\phi}^{\leq -1}(X\times S), \mathcal{P}_S(\leq \phi)\coloneqq\mathcal{D}_{\phi}^{\geq 0}(X\times S))$$ on $D(X\times S)$. Then we have the following lemma in \cite[Section 4]{bayer2014mmp}.
	
	\begin{lemma}[{\cite[Lemma 4.6]{bayer2014mmp}}]\label{global slicing}
		Assume $\sigma=(Z, \mathcal{P})$ is a weak stability condition as in our setup, and $\mathcal{P}_S(>\phi)$, $\mathcal{P}_S(\leq \phi)$ defined as above. There is a slicing $\mathcal{P}_S$ on $D^b(X\times S)$ defined by
		
		$$\mathcal{P}_S(\phi)=\mathcal{P}_S(\leq \phi)\cap \underset{\epsilon>0}{\cap}\mathcal{P}_S(>\phi-\epsilon).$$
	\end{lemma}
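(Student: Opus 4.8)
To establish Lemma~\ref{global slicing} one has to verify the three axioms (a)--(c) of a slicing (Definition~\ref{slicing}) for the family $\{\mathcal{P}_S(\phi)\}_{\phi\in\mathbb{R}}$. My plan rests on two structural properties of the assignment $\phi\mapsto\bigl(\mathcal{P}_S(>\phi),\mathcal{P}_S(\le\phi)\bigr)$, each inherited from the corresponding property on $D(X)$ through the cohomological description of the Abramovich--Polishchuk $t$-structures. The first is \emph{monotonicity}: if $\phi\le\psi$ then $\mathcal{P}_S(>\psi)\subseteq\mathcal{P}_S(>\phi)$, hence $\mathcal{P}_S(\le\phi)\subseteq\mathcal{P}_S(\le\psi)$; this is immediate from $E\in\mathcal{P}_S(>\phi)\iff\mathbf{R}p_*(E\otimes q^*\mathcal{O}(n))\in\mathcal{P}(>\phi)$ for $n\gg0$ together with $\mathcal{P}(>\psi)\subseteq\mathcal{P}(>\phi)$ in the slicing on $D(X)$. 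The second is \emph{compatibility with the shift}: $\mathcal{P}_S(>\phi+1)=\mathcal{P}_S(>\phi)[1]$ and $\mathcal{P}_S(\le\phi+1)=\mathcal{P}_S(\le\phi)[1]$, since $\mathbf{R}p_*(-\otimes q^*\mathcal{O}(n))$ commutes with $[1]$ and $\mathcal{P}(>\phi+1)=\mathcal{P}(>\phi)[1]$.

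Granting these, axioms (a) and (b) are formal. For (a), intersecting the shift relation termwise over $\epsilon>0$ gives $\mathcal{P}_S(\phi+1)=\mathcal{P}_S(\le\phi+1)\cap\bigcap_{\epsilon>0}\mathcal{P}_S(>\phi+1-\epsilon)=\mathcal{P}_S(\le\phi)[1]\cap\bigl(\bigcap_{\epsilon>0}\mathcal{P}_S(>\phi-\epsilon)\bigr)[1]=\mathcal{P}_S(\phi)[1]$. For (b), given $\phi_1>\phi_2$ and $A_j\in\mathcal{P}_S(\phi_j)$, choose $\epsilon>0$ with $\phi_1-\epsilon>\phi_2$; then $A_1\in\mathcal{P}_S(>\phi_1-\epsilon)\subseteq\mathcal{P}_S(>\phi_2)$ by monotonicity while $A_2\in\mathcal{P}_S(\le\phi_2)$, so $\mathrm{Hom}(A_1,A_2)=0$ because $\mathrm{Hom}_{D(X\times S)}\bigl(\mathcal{P}_S(>\phi_2),\mathcal{P}_S(\le\phi_2)\bigr)=0$, a consequence of the orthogonality axiom of the $t$-structure $\bigl(\mathcal{P}_S(>\phi_2),\mathcal{P}_S(\le\phi_2)\bigr)$.

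The substance of the lemma is axiom (c): every $0\ne E\in D(X\times S)$ admits a (necessarily finite) filtration with factors in the $\mathcal{P}_S(\phi)$ and strictly decreasing phases. I would argue in three steps. \emph{Boundedness.} Fixing $\phi_0$ with $\mathcal{A}=\mathcal{P}((\phi_0-1,\phi_0])$ (one may take $\phi_0=1$, in particular $\phi_0\in\mathbb{Z}$), the Abramovich--Polishchuk heart $\mathcal{A}_S=\mathcal{P}_S(>\phi_0-1)\cap\mathcal{P}_S(\le\phi_0)$ is the heart of a \emph{bounded} $t$-structure by \cite[Theorem 3.3.6]{polishchuk2007constant}; hence any $0\ne E$ has finitely many nonzero $\mathcal{A}_S$-cohomology objects, and monotonicity plus the shift relation give an integer $N$ with $E\in\mathcal{P}_S(>\phi)$ for $\phi<\phi_0-1-N$ and $E\in\mathcal{P}_S(\le\phi)$ for $\phi>\phi_0+N$. \emph{Reduction to unit windows.} Writing $\tau^S_{>\phi}$ for the truncation attached to $\bigl(\mathcal{P}_S(>\phi),\mathcal{P}_S(\le\phi)\bigr)$, the nested chain $\bigl(\tau^S_{>\phi_0+k}(E)\bigr)_{k\in\mathbb{Z}}$ interpolates between $0$ and $E$ and is non-constant for only finitely many $k$, say $-N-1\le k\le N$; its graded pieces $G_k:=\mathrm{cone}\bigl(\tau^S_{>\phi_0+k}(E)\to\tau^S_{>\phi_0+k-1}(E)\bigr)$ lie in the windows $\mathcal{P}_S((\phi_0+k-1,\phi_0+k])=\mathcal{P}_S(>\phi_0+k-1)\cap\mathcal{P}_S(\le\phi_0+k)$. \emph{Finite refinement inside a window.} Within the window I would refine $G_k$ using the subfamily $\{\mathcal{P}_S(\phi)\}_{\phi\in(\phi_0+k-1,\phi_0+k]}$: the relevant truncations $\tau^S_{>\phi}(G_k)$ are, by the defining compatibility of the Abramovich--Polishchuk construction, computed by $\mathbf{R}p_*(-\otimes q^*\mathcal{O}(n))$ from the corresponding truncations of $\mathbf{R}p_*(G_k\otimes q^*\mathcal{O}(n))\in D(X)$ for $n\gg0$; since $\sigma$ is a stability condition, the latter object has a \emph{finite} Harder--Narasimhan filtration with respect to $\mathcal{P}$, so only finitely many $\phi\in(\phi_0+k-1,\phi_0+k]$ are ``jump phases'' for $G_k$, and the corresponding graded pieces lie in the $\mathcal{P}_S(\phi)$ (being the windowed Abramovich--Polishchuk globalizations of $\mathcal{P}$-semistable objects). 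Concatenating these refinements over $k$ and merging equal phases produces the filtration of $E$; strict decrease of the phases and membership of the factors in the $\mathcal{P}_S(\phi)$ follow from the window statement together with monotonicity and (b), and uniqueness is automatic once semistability is granted.

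I expect the main obstacle to be precisely this last step — the termination of the construction, i.e. the finiteness of the set of jump phases. The delicate point is that the ``for $n\gg0$'' in the Abramovich--Polishchuk characterization must be made uniform in the phase $\phi$ over the bounded range $(\phi_0-1-N,\phi_0+N]$; equivalently, one must control how the Harder--Narasimhan filtration of $\mathbf{R}p_*(E\otimes q^*\mathcal{O}(n))$ in $D(X)$ behaves as $n\to\infty$. Here the tools are the polynomial nature of $Z(\mathbf{R}p_*(E\otimes q^*\mathcal{O}(n)))=L_E(n)$ — whose coefficients lie in discrete subsets of $\mathbb{C}$, the leading one being, up to a fixed rational scalar, an $r$-th finite difference of values of $Z$ and hence lying in $Z(K_0(D(X)))$ — together with Noetherianity of the Abramovich--Polishchuk hearts, which forbids infinite refinement once the numerical invariants are pinned down. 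If a purely ``upstairs'' argument is preferred, the same finiteness can be extracted from the Harder--Narasimhan existence result proved above, applied on the Noetherian window heart $\mathcal{P}_S((\psi-1,\psi])$ to the weak stability function produced by the construction of Theorem~\ref{glabal weak stablity condition} (whose central charge, being a fixed rational multiple of the leading coefficient of $L_E(n)$, has discrete image); the price is that one must then separately check that this weak-stability slicing agrees on the window with $\{\mathcal{P}_S(\phi)\}_{\phi\in(\psi-1,\psi]}$, which is again a compatibility statement of the kind above.
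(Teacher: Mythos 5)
The paper offers no proof of this lemma at all: it is quoted verbatim from Bayer--Macr\`i (\cite[Lemma 4.6]{bayer2014mmp}) and used as a black box, so there is no internal argument to compare yours against. On its own terms, your treatment of axioms (a) and (b), the monotonicity and shift compatibility of $\phi\mapsto(\mathcal{P}_S(>\phi),\mathcal{P}_S(\leq\phi))$, the boundedness step, and the reduction of axiom (c) to a single window $\mathcal{P}_S((\psi-1,\psi])$ are all correct and essentially formal consequences of Polishchuk's theorem.

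The genuine gap is the step you yourself flag: ``finite refinement inside a window.'' Neither of your two strategies closes it. For the first, the identity $\mathbf{R}p_*(\tau^S_{>\phi}(G_k)\otimes q^*\mathcal{O}(n))=\tau_{>\phi}\mathbf{R}p_*(G_k\otimes q^*\mathcal{O}(n))$ holds only for $n\gg0$ with a threshold depending on $\phi$, and the set of HN phases of $\mathbf{R}p_*(G_k\otimes q^*\mathcal{O}(n))$ in $D(X)$ may vary with $n$; finiteness for each fixed $n$ therefore does not yield finiteness of the jump phases of $G_k$, and no uniformity is established. For the second, the weak stability condition $(\mathcal{A}_S,Z_S)$ of Theorem \ref{glabal weak stablity condition} is strictly coarser than the slicing $\mathcal{P}_S$: by Proposition \ref{semistable reduction} a $\mu_1$-semistable object of phase $\phi$ only sits in an extension $0\to K\to E\to Q\to 0$ with $K\in\mathcal{P}_S(\phi)$ and $Q\in\mathcal{P}_S(<\phi)$, $Z_S(Q)=0$, and every object with $Z_S=0$ (all of $\mathcal{A}_S^{\leq r-1}$) has $\mu_1=+\infty$ and is not separated at all by $Z_S$. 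So the $(\mathcal{A}_S,Z_S)$-HN filtration does not produce factors in the $\mathcal{P}_S(\phi)$; one must iterate over the lower-order coefficients of $L_E(n)$ (an induction on $\deg L_E$, in the spirit of Proposition \ref{positive coefficients} and Section \ref{torsion pairs}) and reconcile each stage with $\mathcal{P}_S$, none of which is carried out. As it stands the decisive finiteness/existence assertion of axiom (c) is described rather than proved.
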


By Lemma \ref{global slicing}, one can describe the relation between $\mu_1$ semistable objects and the global slicing $\mathcal{P}_S$. 
	
	\begin{prop}[{\cite[Proposition 3.14]{Stabilityconditionsonproductvarieties}}]\label{semistable reduction}
		If  $S$ is a smooth projective variety, and $E\in \mathcal{A}_S$ is semistable with respect to $\mu_1$ of phase $\phi$ and $Z_S(E)\neq 0$, then there exists a short exact sequence 
		
		$$0\rightarrow K\rightarrow E\rightarrow Q\rightarrow 0$$
		such that $K\in \mathcal{P}_S(\phi)$, $Q\in \mathcal{P}_S(<\phi)$ and $Z_S(Q)=0$.
	\end{prop}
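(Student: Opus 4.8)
The plan is to produce the short exact sequence by truncating $E$ along its Harder--Narasimhan filtration with respect to the global slicing $\mathcal{P}_S$ of Lemma~\ref{global slicing}, once the grading of $\mathcal{P}_S$ has been matched with the argument of $Z_S$. First I would record the identification $\mathcal{A}_S=\mathcal{P}_S(>0)\cap\mathcal{P}_S(\leq 1)$: indeed $G\in\mathcal{A}_S$ means $\mathbf{R}p_*(G\otimes q^*\mathcal{O}(n))\in\mathcal{A}=\mathcal{D}_0^{\leq -1}(X)\cap\mathcal{D}_1^{\geq 0}(X)$ for $n\gg 0$, which is exactly $G\in\mathcal{D}_0^{\leq -1}(X\times S)\cap\mathcal{D}_1^{\geq 0}(X\times S)$. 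Hence the $\mathcal{P}_S$--Harder--Narasimhan filtration $0=\tilde E_0\subset\tilde E_1\subset\cdots\subset\tilde E_\ell=E$ of $E$ (from axiom~(c) of Definition~\ref{slicing}) has all phases $\psi_1>\psi_2>\cdots>\psi_\ell$ in $(0,1]$; and if $C$ denotes the cone of $\tilde E_1\to E$, which carries an induced filtration with graded pieces $\tilde E_j/\tilde E_{j-1}\in\mathcal{P}_S(\psi_j)$ for $j\geq 2$, then both $\tilde E_1$ and $C$ lie in $\mathcal{A}_S=\mathcal{P}_S((0,1])$, so $0\to\tilde E_1\to E\to C\to 0$ is short exact in $\mathcal{A}_S$. (If $\tilde E_1=E$, then $E$ is $\mathcal{P}_S$-semistable and one takes $K=E$, $Q=0$; so assume $\tilde E_1\subsetneq E$.)

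The decisive step --- the one I expect to carry the real content --- is the comparison lemma: \emph{if $F\in\mathcal{P}_S(\psi)$ with $\psi\in(0,1]$ and $Z_S(F)\neq 0$, then $\arg Z_S(F)=\pi\psi$}. To prove it, note $\mathcal{P}_S(\psi)\subset\mathcal{D}_\psi^{\geq 0}(X\times S)\cap\mathcal{D}_{\psi-\epsilon}^{\leq -1}(X\times S)$ for every $\epsilon>0$, so for each $\epsilon\in(0,1)$ one has $\mathbf{R}p_*(F\otimes q^*\mathcal{O}(n))\in\mathcal{P}\big((\psi-\epsilon,\psi]\big)$ for all $n$ large enough (the threshold depending on $\epsilon$); by Definition~\ref{first WSC}(a) the central charge of any such object lies in the closed convex cone $\{re^{i\theta}:r\geq 0,\ \pi(\psi-\epsilon)\leq\theta\leq\pi\psi\}$, which is genuinely a convex cone because its angular width is $<\pi$. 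Dividing by $n^r\,\mathrm{vol}(\mathcal{O}(1))/r!$ and letting $n\to\infty$ as in Theorem~\ref{glabal weak stablity condition} puts $Z_S(F)$ in this cone; as this holds for all $\epsilon\in(0,1)$ and $Z_S(F)\neq 0$, we get $\arg Z_S(F)=\pi\psi$. The delicate point is precisely the passage between the ``pointwise'' Abramovich--Polishchuk slicing on $D(X\times S)$ and the asymptotic invariant $Z_S$ extracted from the leading coefficient of $L_E(n)=Z(\mathbf{R}p_*(E\otimes q^*\mathcal{O}(n)))$: the membership bound holds only for $n$ large relative to $\epsilon$, and the angular cones must stay convex --- which is exactly why the hypothesis $\psi\in(0,1]$ is needed.

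Granting the lemma, the conclusion is bookkeeping with arguments. Write $\pi\phi=\arg Z_S(E)$, $\phi\in(0,1]$. From $Z_S(E)=\sum_{j=1}^{\ell}Z_S(\tilde E_j/\tilde E_{j-1})$ and the lemma, each nonzero summand has argument $\pi\psi_j\leq\pi\psi_1$, so the sum --- being nonzero --- has argument $\leq\pi\psi_1$, i.e.\ $\phi\leq\psi_1$. For the reverse inequality: if $\phi=1$ it is automatic since $\psi_1\leq 1$; if $\phi<1$ then $\mu_1(E)<+\infty$, so $\mu_1$-semistability applied to $\tilde E_1\subset E$ gives $\mu_1(\tilde E_1)\leq\mu_1(E)<+\infty$, whence $Z_S(\tilde E_1)\neq 0$, and the lemma together with the monotonicity of $\mu_1$ in the argument of $Z_S$ yields $\psi_1\leq\phi$. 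Thus $\psi_1=\phi$ and $\psi_j<\phi$ for $j\geq 2$. I then set $K:=\tilde E_1\in\mathcal{P}_S(\phi)$ and $Q:=C=E/K$, which lies in $\mathcal{P}_S(<\phi)$ since its $\mathcal{P}_S$-phases $\psi_2>\cdots>\psi_\ell$ are all $<\phi$. Finally, $Z_S(K)$ is either $0$ or of argument $\pi\phi$ by the lemma, so $Z_S(Q)=Z_S(E)-Z_S(K)$ is a real multiple of $e^{i\pi\phi}$; on the other hand $Z_S(Q)=\sum_{j\geq 2}Z_S(\tilde E_j/\tilde E_{j-1})$ is a nonnegative combination of vectors each of which is zero or has argument $\pi\psi_j<\pi\phi$, so it is either $0$ or has argument strictly less than $\pi\phi$. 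Since a nonzero real multiple of $e^{i\pi\phi}$ has argument $\pi\phi$ or $\pi\phi-\pi$, neither of which lies in $(0,\pi\phi)$, we conclude $Z_S(Q)=0$, as desired. (A small subtlety typical of weak stability conditions: objects with vanishing central charge have $\mu_1=+\infty$, and it is exactly to absorb them that the statement outputs a quotient $Q$ with $Z_S(Q)=0$ rather than asserting $E\in\mathcal{P}_S(\phi)$.)
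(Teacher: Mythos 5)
The paper does not prove this proposition; it is imported verbatim from \cite[Proposition 3.14]{Stabilityconditionsonproductvarieties}, so there is no in-text argument to compare yours against. On its own terms your proof is correct and complete. The two load-bearing points are exactly where you put them: (i) the identification $\mathcal{A}_S=\mathcal{P}_S(>0)\cap\mathcal{P}_S(\leq 1)$, which is legitimate because the Abramovich--Polishchuk characterization of $D^{[a,b]}(X\times S)$ via $\mathbf{R}p_*(-\otimes q^*\mathcal{O}(n))$ for $n\gg 0$ allows $a,b$ infinite and hence applies to each half of the t-structure separately; and (ii) the comparison lemma $\arg Z_S(F)=\pi\psi$ for $F\in\mathcal{P}_S(\psi)$ with $Z_S(F)\neq 0$, which you derive correctly by trapping $Z(\mathbf{R}p_*(F\otimes q^*\mathcal{O}(n)))$ in the closed convex cone over the arc $[\pi(\psi-\epsilon),\pi\psi]$ (width $<\pi$, so genuinely convex), passing to the limit defining $Z_S$, and intersecting over $\epsilon$. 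The remaining bookkeeping --- $\psi_1=\phi$ via $\mu_1$-semistability applied to $\tilde E_1$ when $\phi<1$ (and trivially when $\phi=1$), and $Z_S(Q)=0$ because $Z_S(Q)$ is simultaneously a real multiple of $e^{i\pi\phi}$ and a sum of vectors that is either zero or has argument in $(0,\pi\phi)$ --- is sound, including the weak-stability subtlety that factors with vanishing $Z_S$ must be absorbed into $Q$. This is also the natural (and, to the best of my knowledge, the original) route to the statement, so I would record it as a correct reconstruction rather than a genuinely different proof.
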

	
	\section{Abelian subcategories in $\mathcal{A}_S$}\label{Abelian subcategories}
		For the simplicity of our statements and arguments, we introduce the following definition.
	\begin{definition}\label{rational stability condition}
		If $(\mathcal{A},Z)$ is a stability condition, and the image of $Z$ lies in $\mathbb{Q}\oplus \mathbb{Q}i$, we call $(\mathcal{A},Z)$ a rational stability condition. We use $RStab(X)$ to denote the set of rational stability conditions on $D(X)$.
	\end{definition}
	\begin{remark}
		By \cite[Proposition 5.0.1]{APsheaves}, we know the heart $\mathcal{A}$ of a rational stability condition is Noetherian. We focus on the rational stability conditions just for the simplicity of statements and arguments. All results and proofs in the rest of this paper can be easily adapted to the stability conditions whose heart is Noetherian and central charge has discrete image in its imaginary part. 
	\end{remark}
	From now on,  unless otherwise specified, we  assume that our original stability condition $\sigma$ on $X$ is a rational stability condition. As in Remark \ref{Complexified Hilber polynomials}, for any $E\in\mathcal{A}_S$, we can write $$L_E(n)\coloneqq\Sigma_{k=0}^{r} (a_k(E)+ib_k(E))n^k,$$ where $a_i, b_i:K_0(\mathcal{A}_S)\rightarrow\mathbb{Q}$ are group homomorphisms for $0\leq i\leq r$.
	
	Then we have the following positivity of coefficients $a_k$ and $b_k$.
	
	\begin{prop} \label{positive coefficients}
		For any $E\in\mathcal{A}_S$, we have the following inequalities.
		
		(1) $b_r(E)\geq 0$.
		
		(2) If $b_r(E)=0$, then $a_r(E)\leq0$ and $b_{r-1}(E)\geq 0$.

		(3) In general, if $$b_r(E)=a_r(E)=b_{r-1}(E)=\cdots=a_i(E)=b_{i-1}(E)=0,$$ then $a_{i-1}(E)\leq 0$ and $b_{i-2}(E)\geq 0$ for any $2\leq i\leq r$.
		
		(4) Moreover, if $E$ is a nonzero object and $$b_r(E)=a_r(E)=b_{r-1}(E)=\cdots=a_1(E)=b_{0}(E)=0,$$ then $a_0(E)<0$.
		
	\end{prop}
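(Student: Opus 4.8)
The plan is to treat the four statements in increasing order of difficulty. The inequalities on the $b_k$ (statement (1), the bound $b_{r-1}(E)\geq 0$ in (2), and $b_{i-2}(E)\geq 0$ in (3)) together with statement (4) follow from one elementary observation. By definition of the global heart, $F_n\coloneqq\textbf{R}p_*(E\otimes q^*(\mathcal{O}(n)))$ lies in $\mathcal{A}$ for all $n\gg 0$, so $L_E(n)=Z(F_n)$ and, since $Z$ is a stability function on $\mathcal{A}$, $\mathrm{Im}\,L_E(n)\geq 0$ for all $n\gg 0$. A real polynomial that is nonnegative for all large $n$ is either identically zero or has a positive leading coefficient; applying this to $\mathrm{Im}\,L_E(n)=\sum_k b_k(E)n^k$ yields $b_r(E)\geq 0$, and, once the higher $b_k(E)$ are assumed to vanish, also $b_{r-1}(E)\geq 0$ in (2) and $b_{i-2}(E)\geq 0$ in (3).

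For statement (4) all the $b_k(E)$ vanish by hypothesis, so $\mathrm{Im}\,Z(F_n)=0$ for $n\gg 0$; as $Z$ is a stability function this forces $\mathrm{Re}\,Z(F_n)\leq 0$, i.e.\ $a_0(E)=\mathrm{Re}\,L_E(n)\leq 0$, with equality only if $Z(F_n)=0$, hence $F_n=0$, for all $n\gg 0$ --- and $F_n=0$ for all $n\gg 0$ forces $E=0$ by a standard argument (restrict to the fibres $\{x\}\times S$ via flat base change along $X\times S\to X$ and apply Serre vanishing and ampleness of $\mathcal{O}(1)$). Hence $a_0(E)<0$ when $E\neq 0$. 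Finally, the bound $a_r(E)\leq 0$ when $b_r(E)=0$ is exactly Theorem \ref{glabal weak stablity condition}: $Z_S(E)=\frac{r!}{\mathrm{vol}(\mathcal{O}(1))}\bigl(a_r(E)+i\,b_r(E)\bigr)$, and $(\mathcal{A}_S,Z_S)$ being a weak stability condition means precisely that $\mathrm{Im}\,Z_S(E)=0$ implies $\mathrm{Re}\,Z_S(E)\leq 0$.

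It remains to prove the bound $a_{i-1}(E)\leq 0$ in (3), and here I would argue by induction on $r=\dim S$, the cases $r\leq 1$ being empty. We may assume $a_{i-1}(E)\neq 0$, so $\deg L_E=i-1$ (otherwise there is nothing to prove). Replacing $\mathcal{O}(1)$ by a power, which does not change $\mathcal{A}_S$, pick a smooth $H\in|\mathcal{O}(1)|$ of dimension $r-1$ with inclusion $\iota\colon X\times H\hookrightarrow X\times S$. Tensoring $0\to\mathcal{O}_S(-1)\to\mathcal{O}_S\to\mathcal{O}_H\to 0$ (pulled back along $q$) with $E\otimes q^*(\mathcal{O}(n))$ and applying $\textbf{R}p_*$ and then $Z$ gives $L_E(n)-L_E(n-1)=Z\bigl(\textbf{R}p_*(E\otimes^{\textbf{L}}q^*\mathcal{O}_H(n))\bigr)$, and the right-hand side is the value at $n$ of the complexified Hilbert polynomial of $\textbf{L}\iota^*E$ computed on $X\times H$ with respect to $\mathcal{O}(1)|_H$. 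Since $b_{i-1}(E)=0$ and $\deg L_E=i-1$, this polynomial has degree $i-2$ with (real) leading coefficient $(i-1)\,a_{i-1}(E)$, so it suffices to bound that leading coefficient. Now $\textbf{L}\iota^*E$ has cohomology for the AP $t$-structure $\mathcal{A}_H$ on $D(X\times H)$ concentrated in degrees $0$ and $-1$; writing these as $F^0,F^1\in\mathcal{A}_H$, one has $L_{\textbf{L}\iota^*E}=L_{F^0}-L_{F^1}$, and one appeals to the inductive positivity over the $(r-1)$-dimensional $H$ applied to $F^0$ and $F^1$.

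The heart of the matter is this last step. One must show (a) that $\textbf{L}\iota^*$ sends $\mathcal{A}_S$ into the full subcategory of $D(X\times H)$ of objects with $\mathcal{A}_H$-cohomology in degrees $0$ and $-1$, and (b) that for a \emph{general} $H$ the ``torsion'' factor $F^1=H^{-1}_{\mathcal{A}_H}(\textbf{L}\iota^*E)$ has complexified Hilbert polynomial of degree $<i-1$. Property (b) rules out cancellation with $F^0$ above degree $i-2$, so that $F^0\in\mathcal{A}_H^{\leq i-2}$, after which the inductive form of (1)--(3) over $H$, together with the fact that $(i-1)a_{i-1}(E)$ is real (hence the imaginary parts of the leading coefficients of $L_{F^0}$ and $L_{F^1}$ agree), pins down the sign of $a_{i-1}(E)$. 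Establishing (b) --- via the genericity of $H$ and a flatness/torsion analysis of $E$ along $H$ --- is the genuine obstacle. A cleaner organizing principle, and probably the right way to run the whole argument, is to prove by induction on $r$ that for each $0\leq j\leq r$ the assignment $E\mapsto a_j(E)+i\,b_j(E)$ is a weak stability function on the abelian subcategory $\mathcal{A}_S^{\leq j}$ --- the case $j=r$ being Theorem \ref{glabal weak stablity condition}, the reduction to smaller $r$ being the restriction above --- whereupon statements (1)--(4) all fall out as instances of the defining properties of these weak stability functions.
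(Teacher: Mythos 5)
Your treatment of everything except the inequality $a_{i-1}(E)\leq 0$ in part (3) is sound: the nonnegativity of $b_r$, $b_{r-1}$ and $b_{i-2}$ follows exactly as you say from $L_E(n)=Z(\textbf{R}p_*(E\otimes q^*\mathcal{O}(n)))$ together with $\textbf{R}p_*(E\otimes q^*\mathcal{O}(n))\in\mathcal{A}$ for $n\gg 0$; the bound $a_r(E)\leq 0$ in (2) is indeed just Theorem \ref{glabal weak stablity condition}; and (4) is correct, although for the implication ``$\textbf{R}p_*(E\otimes q^*\mathcal{O}(n))=0$ for all $n\gg0$ forces $E=0$'' the clean argument is that $E$ then lies in $D^{\leq -1}(X\times S)\cap D^{\geq 1}(X\times S)=0$ by the very definition of the global $t$-structure, not a fibrewise sheaf argument, which does not apply to objects of an abstract AP heart. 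Note also that the paper itself gives no proof of this proposition but cites \cite[Lemma 4.1]{somequadraticinequalities}, so the comparison here is against what a complete argument must contain rather than against a proof in the text.

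The genuine gap is where you locate it, and it is in fact worse than you state. You are right that pointwise positivity of $Z$ on $\textbf{R}p_*(E\otimes q^*\mathcal{O}(n))$ gives no control on subleading real coefficients once the imaginary part is strictly positive, so some restriction-and-induction is unavoidable; but your step (b) --- that for generic $H$ the object $F^1=H^{-1}_{\mathcal{A}_H}(\textbf{L}\iota^*E)$ has $\deg L_{F^1}<i-1$ --- is both unproved and, even if proved, insufficient. If $\deg L_{F^1}=\deg L_{F^0}=i-2$ with equal strictly positive imaginary leading coefficients (which is all that the reality of $(i-1)a_{i-1}(E)=$ the $n^{i-2}$-coefficient of $L_{F^0}-L_{F^1}$ forces), then the inductive form of (1)--(3) over $H$ says nothing about the real parts $a'_{i-2}(F^0)$ and $a'_{i-2}(F^1)$, and their difference can have either sign. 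You would need $F^1$ to contribute nothing in degree $i-2$ (say $\deg L_{F^1}\leq i-3$, or $F^1=0$ for generic $H$), and no argument for this is given: genericity of $H$ controls torsion of a coherent sheaf via a flatness/associated-points analysis, but there is no analogous criterion offered for the $\mathcal{A}_H$-cohomology of $\textbf{L}\iota^*E$ when $E$ lives in an abstract AP heart. Your proposed reorganization (prove inductively that $a_j+ib_j$ is a weak stability function on $\mathcal{A}_S^{\leq j}$) also has a circularity to resolve, since the paper's proof that $\mathcal{A}_S^{\leq j}$ is closed under subquotients (Lemma \ref{subquotients}) already invokes Proposition \ref{positive coefficients}. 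As written, the proposal therefore does not establish part (3).
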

	\begin{proof}
		See the proof in \cite[Lemma 4.1]{somequadraticinequalities}.
	\end{proof}
	
	There are some full subcategories of $\mathcal{A}_S$.
	\begin{definition}
		For any integer $1\leq j\leq r$, we can define a full subcategory $\mathcal{A}_S^{\leq j} $ in $\mathcal{A}_S$ in the following way: $$obj(\mathcal{A}_S^{\leq j})=\{E\in\mathcal{A}_S\ | deg(L_E(n))\leq j\}.$$
	\end{definition}
	
	\begin{lemma}\label{subquotients}
		If $E$ is an object in $\mathcal{A}_S^{\leq j}$, and $Q$ is a subquotient of $E$ in $\mathcal{A}_S$, then $Q\in\mathcal{A}_S^{\leq j}$.
	\end{lemma}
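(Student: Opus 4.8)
The plan is to deduce the statement from the additivity of the complexified Hilbert polynomial together with the positivity of its coefficients recorded in Proposition \ref{positive coefficients}. A subquotient of $E$ is an object isomorphic to $N/M$ with $M\subseteq N\subseteq E$ subobjects in $\mathcal{A}_S$; since $N$ is a subobject of $E$ and $N/M$ is a quotient of $N$, it is enough to show that $\mathcal{A}_S^{\leq j}$ is closed under passing to subobjects and to quotients in $\mathcal{A}_S$. I will do both at once: given a short exact sequence $0\to A\to E\to B\to 0$ in $\mathcal{A}_S$ with $E\in\mathcal{A}_S^{\leq j}$, I will prove $A,B\in\mathcal{A}_S^{\leq j}$, and then the lemma follows by applying this first to $0\to N\to E\to E/N\to 0$ and then to $0\to M\to N\to N/M\to 0$.

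The elementary input is additivity. Such a short exact sequence is an exact triangle in $D(X\times S)$, so $[E]=[A]+[B]$ in the Grothendieck group; tensoring with the line bundle $q^*\mathcal{O}(n)$, applying the exact functor $\textbf{R}p_*$, and then applying $Z$ gives $L_E(n)=L_A(n)+L_B(n)$ for $n\gg 0$, hence as polynomials. Thus $a_m(E)=a_m(A)+a_m(B)$ and $b_m(E)=b_m(A)+b_m(B)$ for all $m$, and in particular $\deg L_E\leq\max\{\deg L_A,\deg L_B\}$. (Equivalently: $\mathcal{A}_S^{\leq j}$ consists exactly of those $E$ with $a_m(E)=b_m(E)=0$ for all $m>j$, and such a subcategory is automatically closed under extensions; the content of the lemma is closure under subobjects and quotients.)

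Now suppose for contradiction that $A\notin\mathcal{A}_S^{\leq j}$ or $B\notin\mathcal{A}_S^{\leq j}$, and set $k\coloneqq\max\{\deg L_A,\deg L_B\}$, so that $j<k\leq r$; note $k\geq 2$ since $j\geq 1$. All coefficients of $L_A$ and of $L_B$ of degree strictly greater than $k$ vanish. The key step is the following consequence of Proposition \ref{positive coefficients}: if $F\in\mathcal{A}_S$ has $a_m(F)=b_m(F)=0$ for all $m>k$, then $b_k(F)\geq 0$, and if moreover $b_k(F)=0$ then $a_k(F)\leq 0$ (equivalently, $F\mapsto a_k(F)+ib_k(F)$ restricts to a weak stability function on $\mathcal{A}_S^{\leq k}$). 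This is parts (1)--(2) of Proposition \ref{positive coefficients} when $k=r$, part (2) when $k=r-1$, and part (3) applied with $i=k+2$, respectively with $i=k+1$, when $k\leq r-2$. Applying it to $A$ and to $B$: we have $b_k(A),b_k(B)\geq 0$, so if $b_k(A)+b_k(B)>0$ then $b_k(E)>0$ and $\deg L_E\geq k>j$, contradicting $E\in\mathcal{A}_S^{\leq j}$; otherwise $b_k(A)=b_k(B)=0$, hence $a_k(A),a_k(B)\leq 0$, and if $a_k(A)+a_k(B)<0$ then again $\deg L_E\geq k>j$, a contradiction; finally, if $a_k(A)=a_k(B)=0$ as well, then the degree-$k$ coefficients of both $L_A$ and $L_B$ vanish, contradicting the definition of $k$. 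Hence $A,B\in\mathcal{A}_S^{\leq j}$, as wanted.

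The only real work is the bracketed claim: one must recognize that the ``lexicographic positivity'' packaged in parts (1)--(3) of Proposition \ref{positive coefficients} controls the sign of the coefficient $a_k(F)+ib_k(F)$ for every intermediate index $k$, not just the leading one, and match $k$ against the roles of $i-2$ and $i-1$ in part (3), handling the boundary indices $k=r,r-1$ separately via (1) and (2). This is routine bookkeeping; since $k\geq 2$ throughout, the bottom case involving $a_0$ (part (4)) is never invoked. I do not anticipate any genuine obstacle beyond keeping the indices straight.
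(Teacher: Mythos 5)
Your proof is correct and takes essentially the same approach as the paper: reduce to a short exact sequence, use additivity of $L_E(n)$, and apply the positivity of Proposition \ref{positive coefficients} to the top nonvanishing coefficients to derive a contradiction. The only difference is organizational — you work with $k=\max\{\deg L_A,\deg L_B\}$ and conclude directly from ``nonnegative summands with zero sum must both vanish,'' whereas the paper tracks the two degrees separately and derives the symmetric contradiction $q>k$ and $k>q$.
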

	\begin{proof}
		Assume $E$ is an object in $\mathcal{A}_S^{\leq j}$, let $$0\rightarrow K\rightarrow E\rightarrow Q\rightarrow 0$$ be a short exact sequence in $\mathcal{A}_S$.
		
		Suppose $k,q$ are the maximal integers such that $a_k(K)+ib_k(K)\neq 0$ and $a_q(Q)+ib_q(Q)\neq 0$ respectively. 
		If $k>j$, then we get $$a_{k}(K)+ib_k(K)=-a_k(Q)-ib_k(Q).$$
		
		By Proposition \ref{positive coefficients}, we get $b_k(K)\geq 0$, and if $b_k(K)=0$, we have $a_k(K)<0$ by the definition of $k$. Then $b_k(Q)\leq 0$, and if $b_k(Q)=0$, we have $a_k(Q)>0$. Therefore, we have $q>k$ by Proposition \ref{positive coefficients}. Similarly, we can prove that $k>q$, a contradiction.
		
		Therefore, $k,q$ can not exceed $j$, which implies $K,Q\in\mathcal{A}_S^{\leq j}$.
	\end{proof}
	\begin{corollary}\label{abelian}
		$\mathcal{A}_S^{\leq j}$ is an abelian category for any $0\leq j\leq r$.
	\end{corollary}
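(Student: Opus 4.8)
The plan is to show that $\mathcal{A}_S^{\leq j}$ is an abelian subcategory of $\mathcal{A}_S$ by verifying that it is closed under the formation of kernels, cokernels, and finite direct sums taken inside the ambient abelian category $\mathcal{A}_S$; since $\mathcal{A}_S$ is abelian, this suffices for $\mathcal{A}_S^{\leq j}$ to be abelian with the inclusion functor exact. The degree condition $\deg(L_E(n)) \leq j$ behaves additively in the sense that $L_{\bullet}(n)$ is an additive function of objects (it is defined via the group homomorphism $Z$ composed with $\mathbf{R}p_*(-\otimes q^*\mathcal{O}(n))$), so for a short exact sequence $0 \to K \to E \to Q \to 0$ in $\mathcal{A}_S$ we have $L_E(n) = L_K(n) + L_Q(n)$ for all $n \gg 0$, hence the identity of polynomials $L_E = L_K + L_Q$.

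First I would record that $\mathcal{A}_S^{\leq j}$ is a full additive subcategory: it clearly contains the zero object, and for $E_1, E_2 \in \mathcal{A}_S^{\leq j}$ one has $L_{E_1 \oplus E_2} = L_{E_1} + L_{E_2}$, whose degree is at most $\max\{\deg L_{E_1}, \deg L_{E_2}\} \leq j$. Next, given a morphism $f\colon E \to E'$ with $E, E' \in \mathcal{A}_S^{\leq j}$, form its kernel $K$, cokernel $C$, and image $I$ inside $\mathcal{A}_S$. Then $K$ is a subobject of $E$ and $C$ is a quotient of $E'$, while $I$ is simultaneously a subquotient of both. By Lemma \ref{subquotients}, every subquotient of an object of $\mathcal{A}_S^{\leq j}$ again lies in $\mathcal{A}_S^{\leq j}$; applying this to $K \subset E$ and to $E' \twoheadrightarrow C$ shows $K, C \in \mathcal{A}_S^{\leq j}$. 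Thus kernels and cokernels computed in $\mathcal{A}_S$ lie in $\mathcal{A}_S^{\leq j}$, and the canonical map $\operatorname{coker}(\ker f) \to \ker(\operatorname{coker} f)$ is an isomorphism in $\mathcal{A}_S$ because it is so in $\mathcal{A}_S$; all of this is inherited by the full subcategory. Therefore $\mathcal{A}_S^{\leq j}$ is abelian, and the inclusion into $\mathcal{A}_S$ is exact.

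The only real content here is Lemma \ref{subquotients}, which has already been established, so there is no serious obstacle; the proof is essentially a formal consequence of that lemma together with additivity of $L_\bullet$. The one point that warrants a sentence of care is the distinction between kernels/cokernels taken in $\mathcal{A}_S^{\leq j}$ versus in $\mathcal{A}_S$: because $\mathcal{A}_S^{\leq j}$ is closed under subobjects and quotients, these agree, so that $\mathcal{A}_S^{\leq j}$ is not merely abelian in its own right but is an \emph{exact abelian subcategory} of $\mathcal{A}_S$ — which is the form in which it will be used in the sequel when we extract the maximal subobject lying in $\mathcal{A}_S^{\leq j}$.
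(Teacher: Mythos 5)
Your proof is correct and follows exactly the route the paper intends: the corollary is stated without a separate argument precisely because it is the formal consequence of Lemma \ref{subquotients} (closure under subobjects and quotients, plus additivity of $L_\bullet$ for direct sums) that you spell out. The only blemish is the typo ``is an isomorphism in $\mathcal{A}_S$ because it is so in $\mathcal{A}_S$,'' where the first instance should read $\mathcal{A}_S^{\leq j}$.
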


	\begin{prop}\label{support torsion pair}
		There exists subcategory $\mathcal{F}^{>j}$ in $\mathcal{A}_S$ for any $1\leq j\leq r$, such that the pair $(\mathcal{A}_S^{\leq j}, \mathcal{F}^{>j})$ is a torsion pair of $\mathcal{A}_S$.
	\end{prop}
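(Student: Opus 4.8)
The plan is to build the torsion-free part directly from the subcategory $\mathcal{A}_S^{\leq j}$ and verify the two torsion-pair axioms. Set
$$\mathcal{F}^{>j}\coloneqq\{F\in\mathcal{A}_S\mid \operatorname{Hom}_{\mathcal{A}_S}(E,F)=0\text{ for all }E\in\mathcal{A}_S^{\leq j}\}.$$
Equivalently, one expects $\mathcal{F}^{>j}$ to consist of those objects having no nonzero subobject in $\mathcal{A}_S^{\leq j}$; I would prove these two descriptions agree as a first step, using Lemma \ref{subquotients} (any subobject of $F$ lying in $\mathcal{A}_S^{\leq j}$ would give a nonzero map into $F$, and conversely the image of a map from $\mathcal{A}_S^{\leq j}$ is a subobject of $F$ lying in $\mathcal{A}_S^{\leq j}$ since $\mathcal{A}_S^{\leq j}$ is closed under quotients). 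The $\operatorname{Hom}$-vanishing axiom $\operatorname{Hom}(\mathcal{A}_S^{\leq j},\mathcal{F}^{>j})=0$ then holds by construction.

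The substantive point is the existence of the short exact sequence: for every $E\in\mathcal{A}_S$ I need a subobject $T\subseteq E$ with $T\in\mathcal{A}_S^{\leq j}$ and $E/T\in\mathcal{F}^{>j}$. The natural candidate is the \emph{maximal} subobject of $E$ contained in $\mathcal{A}_S^{\leq j}$ — this is exactly the object called $E_j$ in the introduction. To produce it I would argue as follows: the family of subobjects $\{T\subseteq E\mid T\in\mathcal{A}_S^{\leq j}\}$ is nonempty (it contains $0$) and closed under finite sums, because if $T_1,T_2\in\mathcal{A}_S^{\leq j}$ then $T_1+T_2$ is a quotient of $T_1\oplus T_2\in\mathcal{A}_S^{\leq j}$, hence lies in $\mathcal{A}_S^{\leq j}$ by Lemma \ref{subquotients}. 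Since $\mathcal{A}_S$ is Noetherian, an ascending chain of such sums stabilizes, so a maximal element $T$ exists. Then $T\in\mathcal{A}_S^{\leq j}$ by construction, and I claim $Q\coloneqq E/T\in\mathcal{F}^{>j}$: if $Q$ had a nonzero subobject $Q'\in\mathcal{A}_S^{\leq j}$, its preimage $T'\subseteq E$ under $E\twoheadrightarrow Q$ sits in an extension $0\to T\to T'\to Q'\to 0$ with $T,Q'\in\mathcal{A}_S^{\leq j}$; since $\mathcal{A}_S^{\leq j}$ is abelian and closed under extensions in $\mathcal{A}_S$ — which follows from Lemma \ref{subquotients} applied to the degree of $L_{T'}=L_T+L_{Q'}$, a polynomial of degree $\leq j$ — we get $T'\in\mathcal{A}_S^{\leq j}$, contradicting maximality of $T$. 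This gives the desired short exact sequence $0\to T\to E\to Q\to 0$.

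The main obstacle I anticipate is purely the bookkeeping around closure properties: one must be careful that $\mathcal{A}_S^{\leq j}$, defined by a degree bound on $L_E$, is genuinely closed under extensions and not merely under sub- and quotient objects. This is where additivity of the complexified Hilbert polynomial in short exact sequences (Remark \ref{Complexified Hilber polynomials}, together with the linearity of the $a_k,b_k$) does the work: $\deg L_{T'}\leq\max(\deg L_T,\deg L_{Q'})\leq j$, with no cancellation issues in the other direction needed here. Once this is in hand the argument is a routine application of Noetherianity, and I would close by remarking that the two equivalent descriptions of $\mathcal{F}^{>j}$ make the $\operatorname{Hom}$-vanishing immediate, completing the verification that $(\mathcal{A}_S^{\leq j},\mathcal{F}^{>j})$ is a torsion pair.
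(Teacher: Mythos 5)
Your proposal is correct and follows essentially the same route as the paper: construct the maximal subobject of $E$ lying in $\mathcal{A}_S^{\leq j}$ by observing that such subobjects are closed under sums (via Lemma \ref{subquotients}) and invoking Noetherianity of $\mathcal{A}_S$, then take $\mathcal{F}^{>j}$ to be the objects whose maximal such subobject vanishes. The only difference is that you spell out the steps the paper leaves as ``easy to see'' --- the extension-closure of $\mathcal{A}_S^{\leq j}$ needed to show $E/T$ is torsion-free, and the $\operatorname{Hom}$-vanishing --- and both of these are argued correctly.
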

	\begin{proof}
		We claim that for any object $E\in\mathcal{A}_S$, there exists a maximal subobject $E_j \in\mathcal{A}_S^{\leq j}$ of $E$.
		
		To prove the claim, we firstly show that for any two subobjects $E_1,E_2\subset E$ and $E_1,E_2\in\mathcal{A}_S^{\leq j}$, there exists a third subobject $E_3\in\mathcal{A}_S^{\leq j}$ such that $E_1, E_2\subset E_3\subset E$.
		
		Suppose we have two injections $$E_1\lhook\joinrel\xrightarrow{f} E\  \ and\ \  E_2\lhook\joinrel\xrightarrow{g} E.$$

		Then we have a morphism $$ E_1\oplus E_2\xrightarrow{(f,g)}E.$$
		
		Let $E_3$ be the image of this morphism, by Lemma \ref{subquotients}, we know that $E_3\in\mathcal{A}_S^{\leq j}$. And $E_1,E_2\subset E_3\subset E$ by construction.
		
		Hence it suffices to prove that the increasing sequence $$E_1\subset E_2\subset\cdots\subset E$$ of subobjects $E_k\in\mathcal{A}_S^{\leq j}$ stabilizes after finite step. This follows directly from the fact $\mathcal{A}_S$ is Noetherian.
		
		Let $F^{>j}$ be the full subcategory consisting of the objects whose maximal subobjects in $\mathcal{A}_S^{\leq j}$ is $0$. Then it is easy to see that $(\mathcal{A}_S^{\leq j}, \mathcal{F}^{>j})$ is a torsion pair.
		
	\end{proof}

	\begin{corollary}
		There exists a unique filtration $$0=E_0\subset E_1\subset\cdots\subset E_r=E$$ for any object $E\in\mathcal{A}_S$, such that $E_j$ is the maximal subobject  of $E$ in $\mathcal{A}_S^{\leq j}$.
	\end{corollary}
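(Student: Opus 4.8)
The plan is to read this corollary off directly from Proposition \ref{support torsion pair}, whose proof in fact produces, for each $j$, not merely a maximal but the \emph{greatest} subobject of $E$ lying in $\mathcal{A}_S^{\leq j}$. Concretely, I would first recall that argument: given two subobjects $E_1,E_2\subseteq E$ with $E_1,E_2\in\mathcal{A}_S^{\leq j}$, their sum $E_1+E_2\subseteq E$ is a quotient of $E_1\oplus E_2$ and hence lies in $\mathcal{A}_S^{\leq j}$ by Lemma \ref{subquotients}; since $\mathcal{A}_S$ is Noetherian, the increasing family of all such subobjects is attained by a single one, which I call $E_j$ (for $1\leq j\leq r$), and this $E_j$ contains every subobject of $E$ belonging to $\mathcal{A}_S^{\leq j}$. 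I put $E_0:=0$ as the base of the filtration. Note that ``subobject of $E$ in $\mathcal{A}_S^{\leq j}$'' is unambiguous precisely because, by Lemma \ref{subquotients}, $\mathcal{A}_S^{\leq j}$ is closed under subobjects in $\mathcal{A}_S$.

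Next I would establish that the $E_j$ form an increasing chain terminating at $E$. Since a polynomial of degree $\leq j$ also has degree $\leq j+1$, there is an inclusion of full abelian subcategories $\mathcal{A}_S^{\leq j}\subseteq\mathcal{A}_S^{\leq j+1}$ (these are abelian by Corollary \ref{abelian}); hence $E_j$, regarded as a subobject of $E$ lying in $\mathcal{A}_S^{\leq j+1}$, is contained in $E_{j+1}$ by the greatest-element property of the latter. This gives $0=E_0\subseteq E_1\subseteq\cdots\subseteq E_r$. Finally, every complexified Hilbert polynomial $L_E(n)$ has degree at most $r=\dim S$ by Remark \ref{Complexified Hilber polynomials}, so $\mathcal{A}_S^{\leq r}=\mathcal{A}_S$; therefore $E$ itself is a (hence the greatest) subobject of $E$ in $\mathcal{A}_S^{\leq r}$, i.e. $E_r=E$, and all the inclusions can be taken strict by collapsing repetitions.

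For uniqueness I would simply observe that the greatest element of a partially ordered set, when it exists, is unique, so the characterization ``$E_j$ is the maximal subobject of $E$ in $\mathcal{A}_S^{\leq j}$'' pins down each $E_j$ unambiguously, and with it the filtration as a whole. There is essentially no obstacle here: the corollary is a formal consequence of Proposition \ref{support torsion pair}, the only point worth flagging being that that proposition delivers the \emph{greatest} subobject in each $\mathcal{A}_S^{\leq j}$ rather than just a maximal one — this, together with the monotonicity $\mathcal{A}_S^{\leq 0}\subseteq\mathcal{A}_S^{\leq 1}\subseteq\cdots\subseteq\mathcal{A}_S^{\leq r}=\mathcal{A}_S$, is exactly what makes the chain nest and the filtration unique.
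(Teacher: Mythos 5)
Your proposal is correct and follows essentially the same route as the paper: take $E_j$ to be the maximal (in fact greatest) subobject of $E$ in $\mathcal{A}_S^{\leq j}$ furnished by Proposition \ref{support torsion pair}, and use the inclusions $\mathcal{A}_S^{\leq j}\subseteq\mathcal{A}_S^{\leq k}$ to nest them; your explicit remark that this subobject is the greatest one (not merely maximal) is exactly the point that makes both the nesting and the uniqueness work, and the paper uses it implicitly. (One trivial caveat: the inclusions in the stated filtration need not be strict, since the indexing by $j=0,\dots,r$ is fixed, so you should not collapse repetitions.)
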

	\begin{proof}
		By the proof of Proposition \ref{support torsion pair}, we can take $E_j$ to be the maximal subobject in $\mathcal{A}_S^{\leq j}$. Since $\mathcal{A}_S^{\leq j}$ is a full subcategory of $\mathcal{A}_S^{\leq k}$ for any integers $1\leq j<k\leq r$, we get $E_j\subset E_k$, hence we get the unique filtration.
	\end{proof}

	\begin{remark}
		One can view this as a generalization of the torsion filtration of a sheaf (see \cite[Definition 1.1.4]{huybrechts2010geometry}).
	\end{remark}

		If we denote the minimal triangulated subcategory generated by $\mathcal{A}_S^{\leq 1}$ in $D(X\times S)$ by $\mathcal{D}^{\leq 1}_S$, then we can apply the construction in \cite{Stabilityconditionsonproductvarieties} on $\mathcal{D}_S^{\leq 1}$.
		
		\begin{corollary}
			The triangulated category $\mathcal{D}_S^{\leq 1}$ admits stability conditions.
		\end{corollary}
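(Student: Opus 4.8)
The plan is to recognize the pair $(\mathcal{D}_S^{\leq 1},\mathcal{A}_S^{\leq 1})$ as an instance of the input data of \cite{Stabilityconditionsonproductvarieties}: the construction there (Theorem \ref{glabal weak stablity condition} and its refinement in that paper) takes a rational stability condition on $D(X)$ and produces a genuine stability condition on $D(X\times C)$ for a smooth projective curve $C$, and the only feature of $C$ that enters the proof is that every complexified Hilbert polynomial has degree $\leq\dim C=1$. So the first thing I would do is exhibit $\mathcal{A}_S^{\leq 1}$ as a Noetherian heart of a bounded $t$-structure on $\mathcal{D}_S^{\leq 1}$. For $n\gg 0$ the functor $\textbf{R}p_*(-\otimes q^*\mathcal{O}(n))$ is exact on $\mathcal{A}_S$, hence $L_{(-)}(n)$ is additive on short exact sequences; thus $\mathcal{A}_S^{\leq 1}$ is closed under extensions in $\mathcal{A}_S$, and by Lemma \ref{subquotients} it is closed under subobjects and quotients. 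A routine cohomological induction then identifies $\mathcal{D}_S^{\leq 1}$ with the full subcategory of $D(X\times S)$ of complexes all of whose $\mathcal{A}_S$-cohomologies lie in $\mathcal{A}_S^{\leq 1}$, so that $(\mathcal{D}_S^{\leq 1}\cap D^{\leq 0}(X\times S),\,\mathcal{D}_S^{\leq 1}\cap D^{\geq 0}(X\times S))$ is a bounded $t$-structure on $\mathcal{D}_S^{\leq 1}$ with heart $\mathcal{A}_S^{\leq 1}$ (abelian by Corollary \ref{abelian}), Noetherian because $\mathcal{A}_S$ is. This puts $(\mathcal{D}_S^{\leq 1},\mathcal{A}_S^{\leq 1})$ in exactly the position occupied by $(D(X\times C),\mathcal{A}_C)$ for a curve, where $\mathcal{A}_C=\mathcal{A}_C^{\leq 1}$.

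Next I would run the three steps of \cite{Stabilityconditionsonproductvarieties} on this data. Write $L_E(n)=(a_1(E)+i\,b_1(E))n+(a_0(E)+i\,b_0(E))$ for $E\in\mathcal{A}_S^{\leq 1}$. First, $\widetilde{Z}_1(E)\coloneqq a_1(E)+i\,b_1(E)$ is a weak stability function on $\mathcal{A}_S^{\leq 1}$: by Proposition \ref{positive coefficients} (all coefficients of degree $\geq 2$ vanishing here) one has $b_1(E)\geq 0$, and $b_1(E)=0$ forces $a_1(E)\leq 0$. Its imaginary part takes values in a discrete subgroup of $\mathbb{Q}$, so by the Harder--Narasimhan existence result for weak stability functions on Noetherian abelian categories proved in Section \ref{BSC}, $(\mathcal{A}_S^{\leq 1},\widetilde{Z}_1)$ is a weak stability condition; call its slope $\mu^{\leq 1}$. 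Second, I would tilt: take the torsion pair $(\mathcal{T},\mathcal{F})$ of $\mathcal{A}_S^{\leq 1}$ attached to $\mu^{\leq 1}$ as in Remark \ref{Torsion pair remark}, and form the tilted heart $(\mathcal{A}_S^{\leq 1})^{\#}=\langle\mathcal{F}[1],\mathcal{T}\rangle$ on $\mathcal{D}_S^{\leq 1}$ via Lemma \ref{Tilt of torsion pairs}. Third, on $(\mathcal{A}_S^{\leq 1})^{\#}$ I would install the refined central charge built from $a_1,b_1$ together with the subleading pair $a_0,b_0$, following \cite{Stabilityconditionsonproductvarieties} verbatim; the remaining inequalities of Proposition \ref{positive coefficients}---namely $b_1=0\Rightarrow b_0\geq 0$, and $a_1=b_1=b_0=0$ with $E\neq 0\Rightarrow a_0<0$---are precisely what is needed to check the stability-function property and to verify the Harder--Narasimhan and support properties. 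The support property transfers to $\mathcal{D}_S^{\leq 1}$ by restricting the ambient lattice $\Lambda$ to the sublattice hit by $K_0(\mathcal{D}_S^{\leq 1})\cong K_0(\mathcal{A}_S^{\leq 1})$ and restricting the quadratic form, which stays negative definite on the (smaller) kernel.

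The hard part is not a new estimate---Proposition \ref{positive coefficients} supplies all the required positivity---but the bookkeeping of Step 1 together with the transfer in Step 3: one must confirm carefully that $\mathcal{D}_S^{\leq 1}$ really is the subcategory of complexes with cohomology in $\mathcal{A}_S^{\leq 1}$ (so that $\mathcal{A}_S^{\leq 1}$ is genuinely a bounded heart and tilting inside $\mathcal{D}_S^{\leq 1}$ is legitimate), and that the Harder--Narasimhan property for the tilted heart $(\mathcal{A}_S^{\leq 1})^{\#}$---which in \cite{Stabilityconditionsonproductvarieties} is proved using the geometry of $X\times C$---depends only on the degree-$\leq 1$ bound and the Noetherianity of $\mathcal{A}_S^{\leq 1}$, and hence survives. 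Granting this, the construction of \cite{Stabilityconditionsonproductvarieties} applies word for word and produces a stability condition on $\mathcal{D}_S^{\leq 1}$, which is the assertion (and recovers Theorem \ref{main corollary}).
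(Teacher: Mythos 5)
Your overall strategy matches the paper's: realize $\mathcal{A}_S^{\leq 1}$ as a Noetherian bounded heart of $\mathcal{D}_S^{\leq 1}$, build a weak stability condition from the degree-one coefficients, tilt at the associated torsion pair, and then run the construction of \cite{Stabilityconditionsonproductvarieties} with the full complexified Hilbert polynomial on the tilted heart. But there is a genuine gap at the decisive step. You claim that the remaining linear inequalities of Proposition \ref{positive coefficients} (namely $b_1=0\Rightarrow a_1\leq 0,\ b_0\geq 0$, and the strict negativity of $a_0$ when everything else vanishes) are ``precisely what is needed'' to verify the stability-function property on the tilted heart. They are not. The problematic objects are $F[1]$ with $F$ semistable for the first-level weak stability condition and sitting on the vertical wall (slope $0$, i.e. $a_1(F)t-b_0(F)=0$ with $b_1(F)>0$): there one must show that the real part $b_1(F)s+a_0(F)$ of the refined charge is strictly positive, and no linear positivity statement controls the sign of $a_0(F)$. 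What is actually required, and what the paper's proof explicitly invokes, is the quadratic Bogomolov--Gieseker-type inequality $b_1(E)a_0(E)-b_0(E)a_1(E)\geq 0$ for $\sigma_t$-semistable objects $E$, imported from \cite[Theorem 4.5]{somequadraticinequalities}. This is a nontrivial input of a different nature from Proposition \ref{positive coefficients}, and your argument as written never produces it.

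A secondary deviation: the paper's first-level weak stability function is the twisted charge $Z_t(E)=a_1(E)t-b_0(E)+i\,b_1(E)t$, not the bare leading coefficient $a_1+ib_1$ that you use. The torsion pair, hence the tilted heart, depends on this choice, and the quadratic inequality above is stated for $Z_t$-semistable objects; if you insist on tilting at the untwisted slope you would have to re-derive the corresponding inequality for your own semistable objects. Your preliminary bookkeeping identifying $\mathcal{D}_S^{\leq 1}$ with the complexes whose $\mathcal{A}_S$-cohomologies lie in $\mathcal{A}_S^{\leq 1}$ (using closure of $\mathcal{A}_S^{\leq 1}$ under subquotients and extensions), and the lattice-restriction argument for the support property, are fine and consistent with what the paper leaves implicit.
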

\begin{proof}
By Proposition \ref{positive coefficients}, we know that  for any $t\in\mathbb{Q}_{>0}$, the central charge $Z_t$ defined by $$Z_t(E)=a_1(E)t-b_0(E)+i\cdot b_1(E)t$$ is a weak stability function on $\mathcal{A}_S^{\leq 1}$.  And the pair $\sigma_t\coloneqq (\mathcal{A}_S^{\leq 1}, Z_t)$ is a weak stability condition by the rationality of the image of $Z_t$ and Noetherianity of $\mathcal{A}_S^{\leq1}$. Hence this gives us a torsion pair as in Remark \ref{Torsion pair remark}, and we tilt the heart $\mathcal{A}_S^{\leq 1}$ in $\mathcal{D}_S^{\leq 1}$ with respect to such a torsion pair as in Lemma \ref{Tilt of torsion pairs}. We use $\mathcal{A}_S^{\leq 1,t}$ to denote the tilted heart. 

By \cite[Theorem 4.5]{somequadraticinequalities}, we know that if $E\in \mathcal{A}_S^{\leq 1}$ is $\sigma_t$ semistable, we have that $$b_1(E)a_0(E)-b_0(E)a_1(E)\geq 0.$$

Therefore, we can use the results in \cite{Stabilityconditionsonproductvarieties} to show that the pair $(\mathcal{A}_S^{\leq 1,t}, Z_S^{s,t})$, where $Z_S^{s,t}(E)=b_1(E)s+a_0(E)+i(-a_1(E)t+b_0(E))$, is a stability condition on $\mathcal{D}^{\leq 1}$ for any $s,t\in\mathbb{Q}_{>0}$. We omit the proof as it is exact the same as in \cite[Sections 4-5]{Stabilityconditionsonproductvarieties}.
\end{proof}
	
	\subsection{Abelian subcategories from a weak stability condition}

	In this subsection, we consider a more general situation. Let $\sigma=(\mathcal{A},Z)$ be an arbitrary weak stability condition on a triangulated category $\mathcal{D}$.
	
	\begin{lemma}\label{subabelian}
		Let  $\mathcal{A}_0$ denote the full subcategory in $\mathcal{A}$ consisting of objects $E$ whose central charge is $0$, then $\mathcal{A}_0$ is an abelian subcatgory.
	\end{lemma}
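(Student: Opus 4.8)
The plan is to verify that $\mathcal{A}_0$ is closed under kernels, cokernels, and extensions inside $\mathcal{A}$, so that the inclusion $\mathcal{A}_0\hookrightarrow\mathcal{A}$ is an exact embedding of an abelian subcategory. The key observation is that $Z$ being a \emph{weak} stability function forces $\operatorname{Im}Z\geq 0$ on every object of $\mathcal{A}$, and this positivity, combined with additivity of $Z$ on short exact sequences, severely constrains what can happen when $Z$ of an object vanishes.

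First I would take a short exact sequence $0\to A\to B\to C\to 0$ in $\mathcal{A}$ with $B\in\mathcal{A}_0$, i.e.\ $Z(B)=0$, and show $A,C\in\mathcal{A}_0$. Since $Z(A)+Z(C)=Z(B)=0$ and $\operatorname{Im}Z(A),\operatorname{Im}Z(C)\geq 0$, we get $\operatorname{Im}Z(A)=\operatorname{Im}Z(C)=0$; then the weak stability condition gives $\operatorname{Re}Z(A)\leq 0$ and $\operatorname{Re}Z(C)\leq 0$, but $\operatorname{Re}Z(A)+\operatorname{Re}Z(C)=0$ forces both to vanish. Hence $Z(A)=Z(C)=0$, so sub- and quotient objects of objects in $\mathcal{A}_0$ again lie in $\mathcal{A}_0$. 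The same computation handles extensions: if $Z(A)=Z(C)=0$ then $Z(B)=0$. Now given any morphism $f\colon E\to F$ in $\mathcal{A}_0$, its kernel, image, coimage and cokernel formed in $\mathcal{A}$ are subquotients of $E$ or $F$, hence all lie in $\mathcal{A}_0$ by the previous step; since $\mathcal{A}$ is abelian the canonical map $\operatorname{coim}f\to\operatorname{im}f$ is an isomorphism there, and it remains one in $\mathcal{A}_0$. Therefore $\mathcal{A}_0$ is abelian and the inclusion into $\mathcal{A}$ is exact.

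I do not anticipate a serious obstacle here: everything reduces to the sign bookkeeping in Definition \ref{weak stability condition}. The only mildly delicate point is making sure one genuinely uses the \emph{weak} (not strict) stability function axiom in the right place — namely that $\operatorname{Im}Z=0$ only yields $\operatorname{Re}Z\leq 0$, which is exactly why one needs the additivity cancellation $\operatorname{Re}Z(A)+\operatorname{Re}Z(C)=0$ to conclude $Z(A)=Z(C)=0$ rather than getting it for free. This is also the reason the statement is phrased for $\mathcal{A}_0$ rather than being vacuous: for a strict stability function $\mathcal{A}_0$ would just be the zero category.
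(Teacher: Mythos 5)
Your argument is correct and is exactly the paper's approach: the paper's proof simply asserts that $\mathcal{A}_0$ is closed under subobjects and quotient objects, and your sign-bookkeeping with $\operatorname{Im}Z\geq 0$, additivity, and $\operatorname{Im}Z=0\Rightarrow\operatorname{Re}Z\leq 0$ is precisely the verification being left implicit there. No issues.
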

	
	\begin{proof}
		It is very easy to see that $\mathcal{A}_0$ is closed under subobjects and quotient objects. In particular, $\mathcal{A}_0$ is an abelian subcategory.
	\end{proof}

	\begin{remark}
		Corollary \ref{abelian} can also be viewed as a direct consequence of Lemma \ref{subabelian}. Indeed, for any $0\leq j\leq r$, $(\mathcal{A}_S^{\leq j}, a_j+ib_j)$ is a weak stability condition, hence we can apply Lemma \ref{subabelian} inductively.
		
	\end{remark}

	Let $\mathcal{P}_{\sigma}(\phi)$ be the subcategory of $\sigma$ semistable objects whose phase is $\phi$. It is easy to see that $\mathcal{A}_0\subset\mathcal{P}_{\sigma}(1)$.

	\begin{lemma}\label{switching lemma}
		If we have a short exact sequence in $\mathcal{A}$
		$$0\rightarrow K\rightarrow E\xrightarrow{f} Q\rightarrow 0,$$ where $K\in\mathcal{P}_{\sigma}(\phi)$ and $Q\in\mathcal{A}_0$, then we have the following short exact sequence in $\mathcal{A}$
		
		$$0\rightarrow Q'\rightarrow E\rightarrow K'\rightarrow 0,$$ where $Q'\in\mathcal{A}_0$ and $K'\in\mathcal{P}_{\sigma}(\phi)$.
	\end{lemma}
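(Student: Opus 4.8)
The plan is to recognize the short exact sequence we want as, essentially, the Harder--Narasimhan filtration of $E$ (which exists by the axioms of a weak stability condition), after checking that this filtration has at most two factors of the expected shapes. First I would clear the degenerate cases: if $K=0$ then $E\cong Q\in\mathcal{A}_0$ and we put $Q'=E$, $K'=0$; if $Q=0$ then $E\cong K\in\mathcal{P}_\sigma(\phi)$ and we put $Q'=0$, $K'=E$. Since $K$ lies in the heart $\mathcal{A}$ we have $\phi\in(0,1]$; and if $\phi=1$ then $\operatorname{Im}Z(E)=\operatorname{Im}Z(K)+\operatorname{Im}Z(Q)=0$, so every HN factor of $E$ has $\operatorname{Im}Z=0$ and hence phase $1$, and since HN phases strictly decrease $E$ is itself semistable of phase $1=\phi$; take $Q'=0$, $K'=E$. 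From now on assume $K,Q\neq0$ and $\phi\in(0,1)$, and note $Z(Q)=0$ forces $Z(E)=Z(K)=m(K)e^{i\pi\phi}$ with $m(K)>0$.

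The first real step is to confine the phases of the HN factors of $E$ to the interval $[\phi,1]$. The upper bound is automatic, as $E\in\mathcal{A}$. For the lower bound, observe that $Q\in\mathcal{A}_0$ is $\sigma$-semistable of phase $1$, since any subobject of $Q$ again lies in $\mathcal{A}_0$ by Lemma~\ref{subabelian} and hence has slope $+\infty$. Write the HN filtration as $0=E_0\subset E_1\subset\cdots\subset E_m=E$, with factors $F_i=E_i/E_{i-1}$ of strictly decreasing phases $\psi_1>\cdots>\psi_m$. If $\psi_m<\phi$, the composite $K\hookrightarrow E\twoheadrightarrow E/E_{m-1}=F_m$ lies in $\operatorname{Hom}_{\mathcal{A}}(K,F_m)=0$ (no maps between semistable objects of strictly descending phase), so $K\subseteq E_{m-1}$ and therefore $F_m$ is a quotient of $E/K=Q$; as $\mathcal{A}_0$ is closed under quotients, $F_m\in\mathcal{A}_0$, which has phase $1>\phi>\psi_m$, a contradiction. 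Hence $\psi_m\geq\phi$, and a fortiori every $\psi_i\geq\phi$.

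The second step reads the structure off the central charge. Each $Z(F_i)$ lies in the closed convex cone $C=\{\,re^{i\theta}:r\geq0,\ \pi\phi\leq\theta\leq\pi\,\}$ --- using $\operatorname{Im}Z\geq0$ together with $\psi_i\geq\phi$ --- and $C$ is pointed because $\phi\in(0,1)$. Since $\sum_iZ(F_i)=Z(E)=m(K)e^{i\pi\phi}$ lies on the extremal ray $\mathbb{R}_{\geq0}e^{i\pi\phi}$ of $C$, I would test against the linear functional $\ell(z)=\operatorname{Im}(e^{-i\pi\phi}z)$, which is nonnegative on $C$ and vanishes on $C$ exactly along that ray: then $\sum_i\ell(Z(F_i))=\ell(Z(E))=0$ with all summands nonnegative forces each $Z(F_i)$ to be either $0$ or a positive multiple of $e^{i\pi\phi}$. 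In the first case $F_i\in\mathcal{A}_0$, so $\psi_i=1$; in the second $F_i$ is semistable of phase $\phi$. As the $\psi_i$ strictly decrease and take only the values $\phi<1$, there are at most two, and if there are two then $\psi_1=1$, $\psi_2=\phi$, i.e.\ $E_1\in\mathcal{A}_0$ and $E/E_1\in\mathcal{P}_\sigma(\phi)$, so $Q'=E_1$, $K'=E/E_1$ works; if there is only one factor, then either $Z(E)=0$ and we take $Q'=E$, $K'=0$, or $E$ is semistable of phase $\phi$ and we take $Q'=0$, $K'=E$. In every case $0\to Q'\to E\to K'\to0$ has $Q'\in\mathcal{A}_0$ and $K'\in\mathcal{P}_\sigma(\phi)$, as required.

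The only genuinely non-formal ingredient is the pointed-cone positivity used in the second step; the rest amounts to bookkeeping with the conventions that objects with $Z=0$ have phase $1$ and that semistable objects of phase $\phi<1$ have nonzero mass. I expect the main obstacle to be organizational --- keeping the several degenerate configurations ($K=0$, $Q=0$, $\phi=1$, and $E$ already semistable) straight so that the conclusion is obtained uniformly --- rather than anything substantive. If one prefers to avoid the cone argument, the same conclusion follows by inducting on the number of HN factors, successively splitting off the top phase-$\phi$ factor and checking that the complementary subobject has vanishing central charge.
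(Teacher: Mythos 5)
Your proof is correct, but it reaches the conclusion by a genuinely different route from the paper. The paper takes $Q'$ to be the maximal subobject of $E$ lying in $\mathcal{P}_{\sigma}(1)$ (given by the HN property) and then verifies the two required facts by hand: it shows $Z(Q')=0$ by noting that $G=\ker(Q'\to Q)$ is a subobject of $K$ with $Z(G)=Z(Q')$, so $Z(Q')\in\mathbb{R}_{<0}$ would produce a slope-$+\infty$ subobject of the phase-$\phi$ semistable $K$; and it shows $K'=E/Q'$ is semistable by pulling any destabilizing subobject $F'\subset K'$ back to $F\subset E$ and intersecting with $K$ to contradict the semistability of $K$. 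Every step is a single-subobject destabilization argument transported to $K$ through the map $f$. You instead analyze the full HN filtration of $E$ at once: you first confine all HN phases to $[\phi,1]$ using $\mathrm{Hom}$-vanishing against $K$ together with the fact that $\mathcal{A}_0$ is closed under quotients, and then apply an extremal-ray argument in the pointed cone spanned by $e^{i\pi\theta}$, $\theta\in[\phi,1]$, to force each HN factor to have central charge either $0$ or on the ray $\mathbb{R}_{>0}e^{i\pi\phi}$, whence at most two factors of phases $1$ and $\phi$. Both proofs rest on the same inputs (existence of HN filtrations, Lemma \ref{subabelian}, and the standard vanishing of morphisms from higher to lower phase, which survives in the weak setting via the seesaw property); yours is slightly more structural and immediately exhibits $Q'$ and $K'$ as the two HN pieces of $E$, while the paper's is more economical in that it never needs the phase bounds or the convexity step, and it records along the way that $Q'$ is a subobject of $Q$, which is used in the remark following the lemma (your construction also yields this, since $\ker(Q'\to Q)\subseteq K$ has vanishing central charge and would destabilize $K$, but you do not state it). Your handling of the degenerate cases $\phi=1$, $K=0$, $Q=0$ is more explicit than the paper's, which only separates off $K\in\mathcal{P}_{\sigma}(1)$.
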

	
	\begin{proof}
		If $K\in\mathcal{P}_{\sigma}(1)$, we know that $E$ is in $\mathcal{P}_{\sigma}(1)$ as well. Hence we can take $K'=E$ and $Q'=0$.
		
		Now we can assume that $0<\phi<1$. By the HN property of $\sigma$, there exists a maximal subobject $Q'\in\mathcal{P}_{\sigma}(1)$ with a natural inclusion $i: Q'\hookrightarrow E$. Hence we have the following short exact sequence
		$$0\rightarrow Q'\xrightarrow{i} E\xrightarrow{g} K'\rightarrow 0.$$
		We claim that $Z(Q')=0$ and $K'\in\mathcal{P}_{\sigma}(\phi)$. 
		
		If $Z(Q')\neq 0$, then we have $Z(Q')\in\mathbb{R}_{<0}$. Let $G\coloneqq ker(f\circ i):Q'\rightarrow Q$, then we have $Z(G)=Z(Q')\in\mathbb{R}_{<0}$ and $G\hookrightarrow K$. This contradicts the facts $K\in\mathcal{P}_{\sigma}(\phi)$ and $\phi<1$. Hence, we get $Z(Q')=0$.
		
		Now we know that $Z(E)=Z(K')$. Suppose that $K'$ is not semistable. Let $F'$ be a destabilizing subobject of $K'$ and $F=g^{-1}(F')$. Then we have $Z(F)=Z(F')$ as $Z(Q')=0$ and a morphism $f':F\rightarrow Q$ which is the composition of $f$ and the inclusion from $F$ to $E$.  
		
		Let $H\coloneqq ker(f')$. We know that $Z(H)=Z(F)$ and $H$ is a subobject of $K$. Since $Z(K)=Z(K')=Z(E)$, $H$ destabilizes $K$ as $\mu(H)=\mu(F)=\mu(F')<\mu(K')=\mu(K)$, which contradicts the assumption that $K$ is semistable.

		Therefore, the claim is proved, and we get the short exact sequence.
	\end{proof}
	
	\begin{remark}
		In the proof of this lemma, we also proved that $Q'$ is a subobject of $Q$.
	\end{remark}
	
	\begin{definition}\label{definition of abelianizer}
		We call $\mathcal{A}_0$ the abelianizer of the weak stability condition $\sigma=(\mathcal{A},Z)$. If there exists a short exact sequence $$0\rightarrow Q'\rightarrow E\rightarrow K'\rightarrow 0,$$ where $Q'\in\mathcal{A}_0$ and $K'\in\mathcal{P}_{\sigma}(\phi)$, we say that $E$ is quasi-semistable.
	\end{definition}

	The following Proposition is the justification of naming $\mathcal{A}_0$ the abelianizer of $\sigma$.

	\begin{prop}\label{abelianizer}
		Let $$\mathcal{A}_{\sigma}(\phi)\coloneqq\{E\in\mathcal{A}\ |E\ is\ quasi- semistable \ and \ Z(E)\in exp(i\pi \phi)\cdot \mathbb{R}_{\geq 0}\}.$$ Then $\mathcal{A}_{\sigma}(\phi)$ is an abelian subcategory.
	\end{prop}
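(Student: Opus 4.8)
The plan is to show that $\mathcal{A}_\sigma(\phi)$, viewed as a full subcategory of the abelian category $\mathcal{A}$, is closed under finite direct sums and under the kernel and cokernel (computed in $\mathcal{A}$) of each of its morphisms; by the standard criterion — a full additive subcategory of an abelian category closed under kernels and cokernels is itself abelian, with exact inclusion — this finishes the proof. One may assume $\phi\in(0,1]$, and the case $\phi=1$ is immediate: there $\mathcal{A}_\sigma(1)=\mathcal{P}_\sigma(1)=\{E\in\mathcal{A}\mid\operatorname{Im}Z(E)=0\}$ (any such $E$ has slope $+\infty$, hence is $\sigma$-semistable of phase $1$, and is trivially quasi-semistable), and $\{E\mid\operatorname{Im}Z(E)=0\}$ is visibly a Serre subcategory. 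So assume $\phi\in(0,1)$. For every $E\in\mathcal{A}_\sigma(\phi)$ not lying in $\mathcal{A}_0$ I would fix, straight from Definition \ref{definition of abelianizer}, a short exact sequence $0\to Q_E\to E\to K_E\to 0$ with $Q_E\in\mathcal{A}_0$ and $K_E$ semistable; since $Z(E)=Z(K_E)\neq 0$ lies on the ray $\exp(i\pi\phi)\mathbb{R}_{>0}$, necessarily $K_E\in\mathcal{P}_\sigma(\phi)$. Note also that $\mathcal{A}_0\subseteq\mathcal{A}_\sigma(\phi)$ (objects of $\mathcal{A}_0$ are quasi-semistable with central charge $0$) and that, by Lemma \ref{subabelian}, $\mathcal{A}_0$ is closed under subobjects, quotients, and extensions.

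The argument then rests on two elementary observations. The first is a seesaw statement: if $M\in\mathcal{P}_\sigma(\phi)$ and $M'\subseteq M$ (respectively $M\twoheadrightarrow M''$) has central charge on $\exp(i\pi\phi)\mathbb{R}_{\geq 0}$, then either that central charge is $0$, forcing $M'=0$ (respectively $M''\in\mathcal{A}_0$), or it is nonzero and $M'$ (respectively $M''$) is again semistable of phase $\phi$. The second is that, for $E\in\mathcal{A}_\sigma(\phi)\setminus\mathcal{A}_0$, any subobject $E'\subseteq E$ has $E'/(E'\cap Q_E)\hookrightarrow K_E$, so $Z(E')$ is $0$ or of phase $\leq\phi$; dually every quotient of $E$ has central charge $0$ or of phase $\geq\phi$ (up to phase $1$). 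Combining these, for a morphism $f\colon E\to F$ in $\mathcal{A}_\sigma(\phi)$ the object $I:=\operatorname{im} f$ is simultaneously a quotient of $E$ and a subobject of $F$, hence $Z(I)\in\exp(i\pi\phi)\mathbb{R}_{\geq 0}$; and then $Z(\ker f)=Z(E)-Z(I)$ and $Z(\operatorname{coker} f)=Z(F)-Z(I)$ lie on the same ray as well, since the only competing possibility, a negative multiple of $\exp(i\pi\phi)$, would have strictly negative imaginary part, impossible for an object of $\mathcal{A}$.

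It then remains to realize each of $\ker f$, $\operatorname{im} f$, $\operatorname{coker} f$ as an extension of an object of $\mathcal{P}_\sigma(\phi)$ by an object of $\mathcal{A}_0$ (or as an object of $\mathcal{A}_0$). For $K=\ker f\subseteq E$ I would take $B=K\cap Q_E\in\mathcal{A}_0$; then $K/B\hookrightarrow K_E$ with $Z(K/B)=Z(K)$ on the ray, so the seesaw observation gives $K/B\in\mathcal{P}_\sigma(\phi)$ (or $K\in\mathcal{A}_0$). For $I=\operatorname{im} f=E/K$, the image $A$ of $Q_E$ in $I$ lies in $\mathcal{A}_0$, and $I/A\cong K_E/(K/B)$ is a quotient of $K_E$ whose central charge $Z(I)$ lies on the ray, hence in $\mathcal{P}_\sigma(\phi)$ (or $I\in\mathcal{A}_0$). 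For $C=\operatorname{coker} f=F/I$, the image $A''$ of $Q_F$ in $C$ lies in $\mathcal{A}_0$ and $C/A''\cong K_F/I'$, where $I'$ is the image of $I$ in $K_F$, again a quotient of an object of $\mathcal{P}_\sigma(\phi)$ with central charge on the ray. Closure under finite direct sums is immediate ($Q_{E_1}\oplus Q_{E_2}\in\mathcal{A}_0$, $K_{E_1}\oplus K_{E_2}\in\mathcal{P}_\sigma(\phi)$), as is the degenerate case in which $E$ or $F$ lies in $\mathcal{A}_0$ (then the relevant subquotients are subobjects or quotients of $\mathcal{A}_0$-objects or of $\mathcal{A}_\sigma(\phi)$-objects by $\mathcal{A}_0$-subobjects). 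I expect the main obstacle to be precisely the bookkeeping of these last steps: one must verify not merely that the relevant subquotients have central charge of phase $\leq\phi$ or $\geq\phi$, but that it lands \emph{exactly} on the ray $\exp(i\pi\phi)\mathbb{R}_{\geq 0}$, so that the seesaw observation genuinely applies and promotes ``subobject or quotient of a $\phi$-semistable object'' to ``$\phi$-semistable''. The double role of $\operatorname{im} f$ as a quotient of $E$ and a subobject of $F$ is exactly what makes this pinch work.
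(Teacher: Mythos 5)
Your overall strategy is sound and genuinely different from the paper's proof (which pushes $f$ down to morphisms $h\colon Q_1\to Q_2$ and $g\colon K_1\to K_2$ via the vanishing $\mathrm{Hom}(Q_1,K_2)=0$ and then runs the snake lemma); your computation that $Z(\operatorname{im}f)$, and hence $Z(\ker f)$ and $Z(\operatorname{coker}f)$, lands exactly on the ray $\exp(i\pi\phi)\cdot\mathbb{R}_{\geq 0}$ is correct, as is the subobject half of your seesaw statement and the treatment of $\ker f$. However, the quotient half of the seesaw statement is false, and it is load-bearing. If $M\in\mathcal{P}_{\sigma}(\phi)$ and $M\twoheadrightarrow M''$ with $Z(M'')\in \exp(i\pi\phi)\cdot\mathbb{R}_{>0}$, the object $M''$ need not be semistable: it can contain a nonzero subobject $W$ with $Z(W)=0$, hence of slope $+\infty>\mu_{\sigma}(M'')$. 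Concretely, for $\mathcal{A}=coh(\mathbb{P}^2)$ and $Z=-\deg+i\cdot \mathrm{rank}$, the surjection $\mathcal{O}^{\oplus 2}\twoheadrightarrow \mathcal{O}_p\oplus\mathcal{O}$ (quotient by $\mathcal{I}_p\oplus 0$, whose charge $i$ lies on the ray) has target of charge $i$, on the ray of phase $1/2$, yet $\mathcal{O}_p$ destabilizes it. The correct conclusion is only that such a quotient is \emph{quasi}-semistable, and even that requires an argument: one must show that every destabilizing subobject has charge zero and then split off a maximal $\mathcal{A}_0$-subobject, which is exactly what the paper does for $coker(g)$.

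This error breaks your decomposition of $C=\operatorname{coker}f$: the piece $C/A''\cong K_F/I'$ need not lie in $\mathcal{P}_{\sigma}(\phi)$, so $0\to A''\to C\to C/A''\to 0$ is not yet the required presentation; you must further extract the $\mathcal{A}_0$-part of $K_F/I'$ and regroup the resulting three-step filtration (the two $\mathcal{A}_0$-pieces combine, since $\mathcal{A}_0$ is extension-closed). Your treatment of $I=\operatorname{im}f$ suffers from the same issue as written, but is easily repaired: since $I\subseteq F$, take $B'=I\cap Q_F\in\mathcal{A}_0$ and observe $I/B'\hookrightarrow K_F$ with $Z(I/B')=Z(I)$ on the ray, so the (valid) subobject seesaw applies. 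No such dodge is available for the cokernel, so the missing quasi-semistability argument for quotients must be supplied before the proof is complete.
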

	
	\begin{proof}
		If $\phi=1$, then $\mathcal{A}_{\sigma}(1)=\mathcal{P}_{\sigma}(1)$. Then $\mathcal{A}_{\sigma}(1)$ is an abelian category since it is closed under subobjects and quotient objects.

		Now we can assume $0<\phi<1$. If we have a morphism $f:E_1\rightarrow E_2$ in $\mathcal{A}$ between two objects $E_1,E_2\in\mathcal{A}_{\sigma}(\phi)$. There exists the following diagram in $\mathcal{A}$.
		
		\[ \begin{tikzcd}
		0 \arrow{r}{}  & Q_1  \arrow{r}{} & E_1 \arrow{r}{} \arrow{d}[swap]{f} & K_1 \arrow{r} & 0 \\%
		0 \arrow{r}{}& Q_2 \arrow{r}{}  & E_2 \arrow{r}{} & K_2 \arrow{r}{} & 0
		\end{tikzcd}
		\]
		where $Q_i\in\mathcal{A}_0$ and $K_i\in\mathcal{P}_{\sigma}(\phi)$ for $i=1,2$.		
		
		Since $0<\phi<1$ and $Q_i\in\mathcal{A}_0$, there is no nontrivial morphism from $Q_1$ to $K_2$. Hence the diagram can be completed by the following commutative diagram.
		
		\[ \begin{tikzcd}
		0 \arrow{r}{}  & Q_1  \arrow{r}{} \arrow{d}[swap]{h}& E_1 \arrow{r}{} \arrow{d}[swap]{f} & K_1 \arrow{r} \arrow{d}[swap]{g}& 0 \\%
		0 \arrow{r}{}& Q_2 \arrow{r}{}  & E_2 \arrow{r}{} & K_2 \arrow{r}{} & 0.
		\end{tikzcd}
		\]
		It suffices to show that $ker(f), im(f)$ and $coker(f)$ are quasi-semistable.
		
		By snake lemma, we have the following long exact sequence
		
		$$0\rightarrow ker(h)\rightarrow ker(f)\rightarrow ker(g)\xrightarrow{\delta}coker(h)\rightarrow coker(f)\rightarrow coker(g)\rightarrow 0.$$
		
		This can be decomposed into two short exact sequences
		$$0\rightarrow ker(h)\rightarrow ker(f)\rightarrow ker(\delta)\rightarrow 0$$and $$0\rightarrow coker(\delta)\rightarrow coker(f)\rightarrow coker(g)\rightarrow 0.$$
		
		We know that  $im(g)$ is a subobject of $K_2$ and a quotient object of $K_1$, and $K_1,K_2\in\mathcal{P}_{\sigma}(\phi)$. This implies that $im(g)\in\mathcal{P}_{\sigma}(\phi)$, hence $ker(g)\in\mathcal{P}_{\sigma}(\phi)$ as well. Since $coker(h)$ is a quotient object of $Q_2$, it is an object in $\mathcal{A}_0$ by Lemma \ref{subabelian}. Therefore, $Z(ker(\delta))=Z(ker(g))$ and $ker(\delta)\subset ker(g)$ imply that $ker(\delta)\in\mathcal{P}_{\sigma}(\phi)$. Thus we get that $ker(f)$ is quasi-semistable of phase $\phi$.
		
		Now we want to show that $coker(f)\in\mathcal{A}_{\sigma}(\phi)$. It is easy to see that $coker(g)\in\mathcal{A}_{\sigma}(\phi)$. Indeed, suppose $F$ is a subobject of $coker(g)$ that destabilizes $coker(g)$. Let $H\coloneqq coker(g)/F$, then if $Z(F)\neq 0$, we have $Z(H)\neq 0$ and $\mu_{\sigma}(H)<\mu_{\sigma}(coker(g))=\mu_{\sigma}(K_2)$. This contradicts to the assumption $K_2\in\mathcal{P}_{\sigma}(\phi)$ as $H$ is a quotient object of $K_2$. Hence we proved that any destabilizing subobject of $coker(g)$ is in $\mathcal{A}_0$. This is equivalent to $coker(g)\in\mathcal{A}_{\sigma}(\phi)$.
		
		By Definition \ref{definition of abelianizer}, we have $Q' \in\mathcal{A}_0$ and $K\in\mathcal{P}_{\sigma}(\phi)$ and  the following short exact sequence $$0\rightarrow Q'\rightarrow coker(g)\rightarrow K\rightarrow 0.$$Furthermore, we have the following commutative diagram by snake's lemma.
		
		\[ \begin{tikzcd}
		0 \arrow{r}{}  & coker(\delta)  \arrow{r}{} \arrow{d}[swap]{id}& Q \arrow{r}{} \arrow{d}[swap]{} & Q' \arrow{r} \arrow{d}[swap]{}& 0 \\%
		0 \arrow{r}{}& coker(\delta) \arrow{r}{}  \arrow{d}[swap]{}  & coker(f) \arrow{r}{}  \arrow{d}[swap]{}& coker(g) \arrow{r}{} \arrow{d}[swap]{}& 0 \\%
		0 \arrow{r} & 0 \arrow{r}{}& K\arrow{r}{id} & K\arrow{r}{} & 0	\end{tikzcd}
		\]
		where $Q$ is the kernel of the composed morphism $coker(f)\rightarrow coker(g)\rightarrow K$, and every row and every column is a short exact sequence. Notice that $coker(\delta)$ is a quotient object of $coker(h)\in\mathcal{A}_0$, hence $Q\in\mathcal{A}_0$, and $coker(f)\in\mathcal{A}_{\sigma}(\phi)$.
		
		It is left to prove that $im(f)\in\mathcal{A}_{\sigma}(\phi)$. By snake lemma, we have the following commutative diagram.
		\[ \begin{tikzcd}
		0 \arrow{r}{}  & ker(h)  \arrow{r}{} \arrow{d}[swap]{}& ker(f) \arrow{r}{} \arrow{d}[swap]{} & ker(\delta) \arrow{r} \arrow{d}[swap]{}& 0 \\%
		0 \arrow{r}{}& Q_1 \arrow{r}{}  \arrow{d}[swap]{}  & E_1 \arrow{r}{}  \arrow{d}[swap]{}& K_1 \arrow{r}{} \arrow{d}[swap]{}& 0 \\%
		0 \arrow{r} & im(h) \arrow{r}{}& im(f) \arrow{r}{} & J\arrow{r}{} & 0	\end{tikzcd}
		\]
		where $J\coloneqq K_1/ker(\delta)$, $im(h)\in\mathcal{A}_0$ and every column is a short exact sequence. Hence it suffices to prove that $J\in\mathcal{A}_{\sigma}(\phi)$. This follows from the following commutative diagram.
		\[ \begin{tikzcd}
		& & & 0\arrow{d}[swap]{}\\
		& 0 \arrow{r}{} \arrow{d}[swap]{} & 0 \arrow{r}{} \arrow{d}[swap]{} & im(\delta) \arrow{d}[swap]{}  & \\%
		0\arrow{r}{}   & ker(\delta)  \arrow{r}{} \arrow{d}[swap]{}& K_1 \arrow{r}{} \arrow{d}[swap]{id} & J \arrow{r} \arrow{d}[swap]{}& 0 \\
		0 \arrow{r}{}& ker(g) \arrow{r}{}  \arrow{d}[swap]{}  & K_1 \arrow{r}{}  \arrow{d}[swap]{}& im(g) \arrow{r}{} \arrow{d}[swap]{} & 0 \\% 
		& im(\delta) \arrow{r}{}\arrow{d}[swap]{}& 0 \arrow{r}{} & 0 \\
		& 0 \end{tikzcd}
		\]
		where $im(\delta)\in\mathcal{A}_0$ and $im(g)\in\mathcal{P}_{\sigma}(\phi)$, and every column is a short exact sequence. Hence $J\in\mathcal{A}_{\sigma}(\phi)$, the proof is complete.
	\end{proof}
	
	\begin{remark}
		When $\sigma=(\mathcal{A},Z)$ is a stability condition, we know that $\mathcal{A}_0$ is trivial. In this case, Proposition \ref{abelianizer} is \cite[Lemma 5.2]{bridgeland2007stability}. 
	\end{remark}

	\section{Lexicographic order filtrations and torsion pairs}\label{torsion pairs}
	
	In this section, we further study the possible applications of Abramovich-Polishchuk's construction on stability conditions. More explicitly, we want to generalize the construction in \cite{Stabilityconditionsonproductvarieties} to the case when $S$ is a higher dimensional projective variety. The idea is to do multiple times of tilts on $\mathcal{A}_S$ by induction, and prove suitable inequalities at each inductive step. However, the usual tilted heart (in Remark \ref{Torsion pair remark}) does not satisfy the inequalities we need. In this section, we come up with a finer cut of $\mathcal{A}_S$ into torsion pairs (Corollary \ref{finer cuts}), whose tilted heart satisfies the inequalities we need (see Theorem \ref{positivity on tilted heart}). This leads us to the notion of $l$-th level semistable objects (Definition \ref{l-thlevel definition}), which is a generalization of the slope stability and the Gieseker stability. We expect this to be the first step towards the generalization of the construction in \cite{Stabilityconditionsonproductvarieties}.

	Let us focus on the abelian subcategories $\mathcal{A}_S^{\leq j}$, Proposition \ref{positive coefficients} and Proposition \ref{abelianizer} give us lots of weak stability conditions and abelian subcategories in $\mathcal{A}_S$.  As in Remark \ref{abuse of terminology}, we call a pair $(\mathcal{A},Z)$ a stability condition if this pair admits HN property (without the reference to any triangulated categories and t-structures).

	\begin{lemma}\label{lemma: first weak stability condition}
		Let $Z_{j}^t(E)=a_j(E)t-b_{j-1}(E)+ib_j(E)$, then $\sigma^t\coloneqq(\mathcal{A}_S^{\leq j}, Z^{t}_j)$ is a weak stability condition for any $t\in\mathbb{Q}_{> 0}$ and any integer $0 \leq j\leq r$.
	\end{lemma}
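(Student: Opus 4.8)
The plan is to check the two properties required of a weak stability condition in the sense of Remark~\ref{abuse of terminology}: that $Z^t_j$ is a weak stability function on $\mathcal{A}_S^{\leq j}$, and that $(\mathcal{A}_S^{\leq j}, Z^t_j)$ has the Harder--Narasimhan property. For the latter I will appeal to the Harder--Narasimhan existence proposition of Section~\ref{BSC}, so it suffices to see that $\mathcal{A}_S^{\leq j}$ is Noetherian and that the imaginary part $\mathrm{Im}(Z^t_j)=b_j$ has discrete image in $\mathbb{R}$.

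For the first point, fix $E\in\mathcal{A}_S^{\leq j}$. By definition $\deg L_E(n)\leq j$, so $a_k(E)=b_k(E)=0$ for all $k>j$. Feeding this chain of vanishings into Proposition~\ref{positive coefficients} gives $b_j(E)\geq 0$ (via part (1) if $j=r$, part (2) if $j=r-1$, and part (3) with index $i=j+2$ otherwise); and when $b_j(E)=0$ the same proposition yields $a_j(E)\leq 0$ and $b_{j-1}(E)\geq 0$ (via part (2) if $j=r$, part (3) with index $i=j+1$ if $1\leq j\leq r-1$, and part (4) if $j=0$, where one reads $b_{-1}:=0$). Hence $\mathrm{Im}(Z^t_j(E))=b_j(E)\geq 0$, and if this vanishes then $\mathrm{Re}(Z^t_j(E))=t\,a_j(E)-b_{j-1}(E)\leq 0$ since $t>0$, $a_j(E)\leq 0$ and $b_{j-1}(E)\geq 0$. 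Thus $Z^t_j$ is a weak stability function (indeed a genuine stability function when $j=0$).

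For the second point, $\mathcal{A}_S^{\leq j}$ is closed under subobjects and quotients in $\mathcal{A}_S$ by Lemma~\ref{subquotients}, hence is a Serre subcategory of the Noetherian category $\mathcal{A}_S$ and is itself Noetherian. Discreteness of $\mathrm{Im}(b_j)$ needs a short argument, since $b_j$ is only a priori $\mathbb{Q}$-valued: for an object $E$, the $j$-th finite difference of the eventually-polynomial function $n\mapsto\mathrm{Im}(L_E(n))=\mathrm{Im}\big(Z(\textbf{R}p_*(E\otimes q^*\mathcal{O}(n)))\big)$ equals the constant $j!\,b_j(E)$ for $n\gg 0$, and it is an integral combination of finitely many values of this function, each of which lies in the discrete subgroup $\Gamma:=\mathrm{Im}(\mathrm{Im}\circ Z)\subset\mathbb{R}$; therefore $j!\,b_j(E)\in\Gamma$, so $\mathrm{Im}(b_j)\subseteq\frac{1}{j!}\Gamma$ is discrete. (Alternatively, this can be extracted from the discreteness of $\mathrm{Im}(Z_S)$ already used in \cite{Stabilityconditionsonproductvarieties}.) Now the Harder--Narasimhan existence proposition applies to $(\mathcal{A}_S^{\leq j}, Z^t_j)$, completing the verification. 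The only delicate part of the whole argument is this discreteness claim — rationality alone does not yield it — whereas the bookkeeping in matching the vanishing chain against the four parts of Proposition~\ref{positive coefficients}, and handling the endpoint $j=0$, is entirely routine.
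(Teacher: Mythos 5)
Your proof is correct and follows essentially the same route as the paper's, which simply cites Proposition \ref{positive coefficients} for the weak-stability-function property and then invokes ``rationality of $b_j,a_j$'' together with Noetherianity of $\mathcal{A}_S^{\leq j}$ for the HN property. The one place you go beyond the paper is the finite-difference argument showing that $\mathrm{Im}(b_j)$ is genuinely discrete (rationality alone would not suffice); this correctly fills in a step the paper leaves implicit, using the setup's assumption that $Z$ has discrete image.
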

	\begin{proof}
		By Proposition \ref{positive coefficients},	$Z_j^t$ is a weak stability function on $\mathcal{A}_S^{\leq j}$. Then by the rationality of $b_j, a_j$ and the Noetherianity of $\mathcal{A}_S^{\leq j}$, we see that $\sigma^t$ admits HN property.
	\end{proof}

	Apply Proposition \ref{abelianizer} on the weak stability condition $\sigma^t$ in Lemma \ref{lemma: first weak stability condition}, we have the following abelian subcategories in $\mathcal{A}_S$.
	
	\begin{definition}
		For the simplicity of notation, let $\mathcal{A}^t(\phi)$ denote $\mathcal{A}_{\sigma^t}(\phi)$. Then $\mathcal{A}^t(\phi)$ is an abelian subcategory by Proposition \ref{abelianizer}.
	\end{definition}

	\begin{lemma}\label{first slice stability condition} We have the following stability conditions.

		(1) The pair $$\sigma_{1}^{t,t'}=(\mathcal{A}^t(1), a_{j-1}t'-b_{j-2}+i(b_{j-1}-a_jt))$$ is a weak stability condition for any $t,t'\in\mathbb{Q}_{> 0}$.
		
		(2) The pair $$\sigma_{\phi}^{t,t'}=(\mathcal{A}^t(\phi), a_{j-1}t'-b_{j-2}+ib_j)$$ is a weak stability condition for any $t, t'\in\mathbb{Q}_{> 0}$ and $\phi\in(0,1)$.
	\end{lemma}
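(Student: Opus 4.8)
The plan is to verify, in each case, the two defining properties of a (weak) stability condition in the sense of Remark \ref{abuse of terminology}: that the proposed central charge is a weak stability function on the relevant abelian subcategory, and that the Harder--Narasimhan property holds. I would dispose of the Harder--Narasimhan part first. One observes that $\mathcal{A}^t(\phi)$ is Noetherian: the proof of Proposition \ref{abelianizer} shows that the kernel, image and cokernel (formed in $\mathcal{A}_S$) of any morphism between objects of $\mathcal{A}^t(\phi)$ again lie in $\mathcal{A}^t(\phi)$, so the inclusion $\mathcal{A}^t(\phi)\hookrightarrow\mathcal{A}_S$ is exact and any ascending chain of subobjects in $\mathcal{A}^t(\phi)$ stabilises because $\mathcal{A}_S$ is Noetherian. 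Since the $a_i,b_i$ are group homomorphisms with values in $\mathbb{Q}$ and $t,t'\in\mathbb{Q}_{>0}$, the Harder--Narasimhan property then follows exactly as in Lemma \ref{lemma: first weak stability condition}. So the real content is the positivity of the imaginary part together with the sign of the real part where the imaginary part vanishes; this is where Proposition \ref{positive coefficients} enters.

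For part (2), I would take $E\in\mathcal{A}^t(\phi)$ with $0<\phi<1$. By construction $E$ is quasi-semistable of phase $\phi$ for $\sigma^t=(\mathcal{A}_S^{\leq j},Z^t_j)$, so $Z^t_j(E)=a_j(E)t-b_{j-1}(E)+ib_j(E)$ lies in $\exp(i\pi\phi)\,\mathbb{R}_{\geq 0}$; because $\sin\pi\phi>0$, this forces either $b_j(E)>0$ or $Z^t_j(E)=0$. In the first case $\mathrm{Im}\big(a_{j-1}(E)t'-b_{j-2}(E)+ib_j(E)\big)=b_j(E)>0$ and there is nothing further to check. In the second case $b_j(E)=0$, so Proposition \ref{positive coefficients}(2) gives $a_j(E)\leq 0$ and $b_{j-1}(E)\geq 0$; but $a_j(E)t=b_{j-1}(E)$ with $t>0$ then forces $a_j(E)=b_{j-1}(E)=0$. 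Since $E\in\mathcal{A}_S^{\leq j}$, all coefficients of index $>j$ already vanish, so Proposition \ref{positive coefficients}(3) applies with $i=j$ and gives $a_{j-1}(E)\leq 0$, $b_{j-2}(E)\geq 0$, whence the real part $a_{j-1}(E)t'-b_{j-2}(E)\leq 0$ as $t'>0$. For $j=1$ I would use Proposition \ref{positive coefficients}(4) instead, which yields the strict inequality $a_0(E)<0$ for $E\neq 0$; a coefficient of negative index such as $b_{j-2}$ for small $j$ is read as $0$.

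For part (1), I would first recall from the proof of Proposition \ref{abelianizer} that $\mathcal{A}^t(1)=\mathcal{A}_{\sigma^t}(1)=\mathcal{P}_{\sigma^t}(1)$, so that every $E\in\mathcal{A}^t(1)$ has $Z^t_j(E)\in\mathbb{R}_{\leq 0}$, i.e. $b_j(E)=0$ and $a_j(E)t-b_{j-1}(E)\leq 0$. Then the imaginary part of the proposed charge equals $b_{j-1}(E)-a_j(E)t\geq 0$. When it vanishes, $a_j(E)t=b_{j-1}(E)$; combined with Proposition \ref{positive coefficients}(2) (which gives $a_j(E)\leq 0\leq b_{j-1}(E)$ since $b_j(E)=0$) and $t>0$, this again forces $a_j(E)=b_{j-1}(E)=0$, and the same appeal to Proposition \ref{positive coefficients}(3) (or (4) when $j=1$) shows the real part $a_{j-1}(E)t'-b_{j-2}(E)\leq 0$. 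Together with the Noetherianity and rationality noted above, this makes $\sigma_1^{t,t'}$ a weak stability condition.

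The step demanding the most care is not any single computation but the bookkeeping around the vanishing of the imaginary part: in both parts the decisive observation is that $t>0$ renders the relation $a_j(E)t=b_{j-1}(E)$ incompatible with $a_j(E)\leq 0\leq b_{j-1}(E)$ unless both sides vanish, which is precisely what allows Proposition \ref{positive coefficients} to take over and control the real part; one must also keep track of the degenerate low-index cases $j\leq 1$, where the applicable part of Proposition \ref{positive coefficients} changes and some coefficient is vacuously zero.
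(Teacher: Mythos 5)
Your proposal is correct and follows essentially the same route as the paper's own proof: verify via Proposition \ref{positive coefficients} that the proposed charge is a weak stability function on $\mathcal{A}^t(\phi)$ (using that membership in $\mathcal{A}^t(\phi)$ forces $Z_j^t(E)\in\exp(i\pi\phi)\mathbb{R}_{\geq 0}$, so vanishing of the new imaginary part forces $b_j(E)=a_j(E)=b_{j-1}(E)=0$), and deduce the HN property from rationality of the charge and Noetherianity of $\mathcal{A}^t(\phi)$. You in fact supply slightly more detail than the paper (the sign bookkeeping from $a_j(E)t=b_{j-1}(E)$ with $a_j(E)\leq 0\leq b_{j-1}(E)$, and the low-index cases), but the argument is the same.
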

	
	\begin{proof}
		For (1), assume that $E\in\mathcal{A}^t(1)$ and $b_{j-1}(E)-a_j(E)t=0$. Then we get $b_j(E)=a_j(E)=b_{j-1}(E)=0$. By Proposition \ref{positive coefficients}, this implies that $a_{j-1}(E)\leq 0$ and $b_{j-2}(E)\geq 0$. Therefore, we know that $a_{j-1}t'-b_{j-2}+ib_j$ is a weak stability function on $\mathcal{A}^t(\phi)$.

		For (2), assume that $E\in\mathcal{A}^t(\phi)$ and $b_j(E)=0$. Then since $\phi\in(0,1)$, we have that $Z_j^t(E)=0$, hence $b_j(E)=a_j(E)=b_{j-1}(E)=0$.  Therefore, by the same reason, we know that $a_{j-1}t'-b_{j-2}+ib_j$ is a weak stability function on $\mathcal{A}^t(\phi)$.
		
		The HN property following from rationality of central charge and the Noetherianity of $\mathcal{A}^t(\phi)$.
	\end{proof}

	We can construct weak stability conditions inductively by Proposition \ref{positive coefficients}. For convention, we define $b_{-1}=0$.

	\begin{lemma}\label{slice stability condition}

		(1) Inductively, for any $1\leq k\leq j$, let $\mathcal{A}^{t_1,t_2,\cdots,t_k}_{\phi_1,\phi_2,\cdots,\phi_{k}}\coloneqq\mathcal{A}_{\sigma_{\phi_1,\phi_2,\cdots,\phi_{k-1}}^{t_1,t_2,\cdots, t_k}}(\phi_k) $, then the pair $$\sigma_{\phi_1,\phi_2,\cdots,\phi_k}^{t_1,t_2,\cdots,t_{k+1}}=(\mathcal{A}^{t_1,t_2,\cdots,t_k}_{\phi_1,\phi_2\cdots,\phi_k}, a_{j-k}t_{k+1}-b_{j-k-1}+ib_j)$$ is a weak stability condition for any $t_1,\cdots, t_{k+1}\in\mathbb{Q}_{> 0}$ and any $\phi_1\cdots,\phi_k\in(0,1)$.
		
		(2) Given a sequence of real numbers $\phi_1,\cdots,\phi_k\in(0,1]$, let $M=\{1\leq m\leq k| \phi_i=1 \}$. Assume that $M$ is a nonempty set, and let $n$ be the maximal integer in $M$. Then the pair $$\sigma_{\phi_1,\phi_2,\cdots,\phi_k}^{t_1,t_2,\cdots,t_{k+1}}=(\mathcal{A}^{t_1,t_2,\cdots,t_k}_{\phi_1,\phi_2\cdots,\phi_k}, a_{j-k}t_{k+1}-b_{j-k-1}-i(a_{j-n+1}t_n-b_{j-n}))$$ is a weak stability condition for any $t_1,\cdots, t_{k+1}\in\mathbb{Q}_{> 0}$.
	\end{lemma}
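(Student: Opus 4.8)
The plan is to establish (1) and (2) simultaneously by induction on $k$ (with $1\le k\le j$). The base case $k=1$ is already available: part (1) for $k=1$ is Lemma \ref{first slice stability condition}(2), while in part (2) for $k=1$ the nonempty subset $M\subseteq\{1\}$ is forced to be $\{1\}$, so $\phi_1=1$, $n=1$, and the assertion is Lemma \ref{first slice stability condition}(1). For the inductive step I would first observe that $\sigma^{t_1,\dots,t_k}_{\phi_1,\dots,\phi_{k-1}}$ is a weak stability condition --- by part (1) of the inductive hypothesis when $\phi_1,\dots,\phi_{k-1}\in(0,1)$, and by part (2) when some $\phi_m=1$ --- so it satisfies the HN property and Proposition \ref{abelianizer} applies, making $\mathcal{A}^{t_1,\dots,t_k}_{\phi_1,\dots,\phi_k}=\mathcal{A}_{\sigma^{t_1,\dots,t_k}_{\phi_1,\dots,\phi_{k-1}}}(\phi_k)$ an abelian subcategory. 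It then remains to check that the prescribed central charge is a weak stability function on this subcategory and that the pair has the HN property.

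The heart of the argument is a downward ``vanishing propagation''. Write $W_l$ for the central charge of the level-$l$ weak stability condition $\sigma^{t_1,\dots,t_l}_{\phi_1,\dots,\phi_{l-1}}$; by the inductive formulas its real part has the form $a_m t-b_{m-1}$ for an index $m=m(l)$ dropping by one at each level, and its imaginary part is $b_j$ as long as $\phi_1,\dots,\phi_{l-1}\in(0,1)$ and otherwise equals the fixed quantity $b_{j-n}-a_{j-n+1}t_n$, where $n$ is the largest index below $l$ with $\phi_n=1$. I would prove: if $E\in\mathcal{A}^{t_1,\dots,t_k}_{\phi_1,\dots,\phi_k}$ has vanishing imaginary part of the new central charge $Z$, then $a_j(E)=b_{j-1}(E)=\cdots=b_{j-k}(E)=0$ (ending one slot below the index of the real part of $Z$). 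The mechanism is that $\phi_l\in(0,1)$ forces any object of $\mathcal{A}^{t_1,\dots,t_l}_{\phi_1,\dots,\phi_l}$ whose $W_l$ has vanishing imaginary part to have $W_l=0$, while a phase-$1$ object of $\mathcal{P}_{W_l}(1)$ satisfies $\operatorname{Re}W_l\le 0$; so descending level by level one gets, at each level, $a_m(E)t-b_{m-1}(E)=0$ or $\le 0$, and substituting the vanishings already obtained at higher levels into Proposition \ref{positive coefficients}(3) (or (4) when $m=0$, using the convention $b_{-1}=0$) yields $a_m(E)\le 0$, $b_{m-1}(E)\ge 0$, whence both vanish because $t>0$. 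Running this to the bottom level and applying Proposition \ref{positive coefficients}(3)/(4) once more gives $\operatorname{Re}Z(E)=a_{j-k}(E)t_{k+1}-b_{j-k-1}(E)\le 0$ (strictly negative if $E\ne 0$ and $k=j$). The inequality $\operatorname{Im}Z(E)\ge 0$ is immediate from Proposition \ref{positive coefficients}(1) in part (1); in part (2) it reads $b_{j-n}(E)-a_{j-n+1}(E)t_n\ge 0$, which holds because $E$ lies in $\mathcal{A}^{t_1,\dots,t_n}_{\phi_1,\dots,\phi_n}=\mathcal{P}_{W_n}(1)$ and hence has $\operatorname{Re}W_n(E)\le 0$.

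For the HN property I would argue exactly as in Lemma \ref{lemma: first weak stability condition}: since the kernels, images and cokernels built in the proof of Proposition \ref{abelianizer} agree with the ones in $\mathcal{A}_S$, any ascending chain of subobjects of a fixed object of $\mathcal{A}^{t_1,\dots,t_k}_{\phi_1,\dots,\phi_k}$ is an ascending chain in the Noetherian category $\mathcal{A}_S$, so $\mathcal{A}^{t_1,\dots,t_k}_{\phi_1,\dots,\phi_k}$ is Noetherian; combined with the rationality (hence discreteness of the image) of the coefficients, the HN property follows. The step I expect to be the main nuisance --- more than a genuine obstacle --- is the index bookkeeping, in particular the case split in part (2) according to whether $\phi_k<1$, so that the pivot index $n$ is inherited unchanged from level $k-1$, or $\phi_k=1$, so that $n=k$, $\mathcal{A}^{t_1,\dots,t_k}_{\phi_1,\dots,\phi_k}=\mathcal{P}_{W_k}(1)$, and the new imaginary part $b_{j-k}-a_{j-k+1}t_k$ is nonnegative precisely because those objects satisfy $\operatorname{Re}W_k\le 0$.
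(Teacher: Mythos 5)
Your proposal is correct and follows essentially the same route as the paper: an induction on $k$ that interleaves the lemma with a characterization of the abelianizer of $\sigma^{t_1,\dots,t_k}_{\phi_1,\dots,\phi_{k-1}}$ (your ``vanishing propagation''), a case split on whether $\phi_k<1$ or $\phi_k=1$, Proposition \ref{positive coefficients} to control the sign of the real part, and Noetherianity plus rationality of the charge for the HN property. The only differences are cosmetic: the paper packages the descent as two interleaved inductive statements (one for the abelianizer, one for the lemma) rather than unrolling all levels at once, and your ``$=0$ or $\le 0$'' should in fact read ``$=0$'' at every level once $Im(Z(E))=0$ is imposed (the phase-one levels also yield equalities, since there the vanishing of the new imaginary part is exactly the vanishing of $Re(W_n)(E)$), which is precisely what makes your ``whence both vanish'' step go through.
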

	\begin{proof}
		
		Firstly, we claim that for any $t_1,t_2,\cdots,t_{k+1}\in\mathbb{Q}_{>0}$ and $\phi_1,\cdots,\phi_k\in(0,1]$, the abelianizer (see Definition \ref{definition of abelianizer}) of $\sigma_{\phi_1,\phi_2,\cdots,\phi_k}^{t_1,t_2,\cdots,t_{k+1}}$ is the full abelian subcategory consists of the following objects $$\{E\in\mathcal{A}_S^{\leq j}| b_j(E)=a_j(E)=\cdots=b_{j-k}(E)=a_{j-k}(E)=b_{j-k-1}(E)=0 \}.$$
		
		The claim and lemma can be proved by induction on $k$, the case $k=1$ follows from Lemma \ref{first slice stability condition} and Proposition \ref{positive coefficients}. 
		
		Let $(C_l)$ denote the proposition that the claim is true for all $1\leq k\leq l$, and $(L_l)$ denote the proposition that the lemma is true for all $1\leq k\leq l$. Then we will prove that $(C_{l-1})$ implies $(L_l)$ and $(L_l)$ implies $(C_l)$. Hence both propositions are true for any $1\leq k\leq j$.
		
		Indeed, assume that $(C_{k-1})$ is true. If $\phi_k<1$, we assume that  $E\in\mathcal{A}_{\phi_1,\cdots,\phi_k}^{t_1,\cdots,t_k}$ and the imaginary part of $E$ in $\sigma_{\phi_1,\phi_2,\cdots,\phi_k}^{t_1,t_2,\cdots,t_{k+1}}$ is $0$. Since $\phi_k<1$, the imaginary parts of $\sigma_{\phi_1,\phi_2,\cdots,\phi_k}^{t_1,t_2,\cdots,t_{k+1}}$ and $\sigma_{\phi_1,\phi_2,\cdots,\phi_{k-1}}^{t_1,t_2,\cdots,t_{k}}$ are the same by construction. By definition of $\mathcal{A}^{t_1,t_2,\cdots,t_k}_{\phi_1,\phi_2\cdots,\phi_k}$, we know that $E$ is in the abelianizer of $\sigma_{\phi_1,\phi_2,\cdots,\phi_{k-1}}^{t_1,t_2,\cdots,t_{k}}$. Hence by $(C_{k-1})$, we get that $$b_j(E)=a_j(E)=\cdots=b_{j-k+1}(E)=a_{j-k+1}(E)=b_{j-k}(E)=0.$$ Therefore, by Proposition \ref{positive coefficients}, $a_{j-k}(E)t_{k+1}-b_{j-k-1}(E)\leq 0$, hence $(L_k)$ and $(C_k)$ holds.
		
		Now we consider  the case when $\phi_k=1$. We assume that  $E\in\mathcal{A}_{\phi_1,\cdots,\phi_k}^{t_1,\cdots,t_k}$ and the imaginary part of $E$ in $\sigma_{\phi_1,\phi_2,\cdots,\phi_k}^{t_1,t_2,\cdots,t_{k+1}}$ is $0$, i.e., $a_{j-k+1}(E)t_k-b_{j-k}(E)=0$. Since $a_{j-k+1}t_k-b_{j-k}$ is the real part of $\sigma_{\phi_1,\phi_2,\cdots,\phi_{k-1}}^{t_1,t_2,\cdots,t_{k}}$ and $E$ is semistable of phase 1 with respect to $\sigma_{\phi_1,\phi_2,\cdots,\phi_{k-1}}^{t_1,t_2,\cdots,t_{k}}$. We get that $E$ is in the abelianizer of $\sigma_{\phi_1,\phi_2,\cdots,\phi_{k-1}}^{t_1,t_2,\cdots,t_{k}}$. Hence $(L_k)$ and $(C_k)$ holds.

		The HN property following from rationality of central charge and the Noetherianity of $\mathcal{A}^{t_1,t_2,\cdots,t_k}_{\phi_1,\phi_2,\cdots,\phi_k}$.
	\end{proof}
	\begin{remark}
		In fact, $t_i$ is not necessarily a fixed real number. It could be a continuous function  $t_i:(0,1]\rightarrow \mathbb{R}_{> 0}$, $\phi_{i+1}\mapsto t_{i}(\phi_{i+1})$, such that $t_{i}(1)\in\mathbb{Q}_{> 0}$. We assume them to be fixed numbers for the simplicity of the notation.
	\end{remark}
	\begin{definition}\label{l-thlevel definition}
		Given $t_1,t_2,\cdots,t_l\in\mathbb{Q}_{>0}$, suppose $E$ is $\sigma^{t_1}$ semistable of phase $\phi_1$ and $E\in\mathcal{A}^{t_1,t_2,\cdots,t_{k-1}}_{\phi_1,\phi_2\cdots,\phi_{k-1}}$ is semistable with respect to $\sigma_{\phi_1,\phi_2,\cdots,\phi_{k-1}}^{t_1,t_2,\cdots,t_{k}}$ of phase $\phi_k$ for any $1<k\leq l$. Then we say that $E$ is $l$-th level semistable of phase $\vec{\phi}=(\phi_1,\cdots,\phi_l)$, where $\phi_i\in(0,1]$.
		
		\begin{example}
			Note that in the trivial case $X=Spec(\mathbb{C})$, $\sigma=(\mathrm{Vect}, i\cdot dim)$, where $\mathrm{Vect}$ is the category of finite dimensional $\mathbb{C}$ vector spaces. And $S$ is a smooth projective variety of dimension $r$. Then being $1$-st level semistable is equivalent to being slope semistable, and being $(r+1)$-th level semitable is equivalent to being Gieseker semistable.
		\end{example}
		
		We will define the lexicographic order of such vectors, i.e., we say that $$(\phi_1,\phi_2,\cdots,\phi_l)>(\psi_1,\psi_2,\cdots,\psi_l)$$ if $\phi_1>\psi_1$, or $\phi_1=\psi_1,  \phi_2=\psi_2, \cdots, \phi_k=\psi_k$ and $\phi_{k+1}>\psi_{k+1}$ for some $1\leq k\leq l-1$.
	\end{definition}

	\begin{lemma}\label{invariance of phase}
		Given $t_1,t_2,\cdots,t_k\in\mathbb{Q}_{>0}$, assume that  $E$ is $(k-1)$-th level semistable of phase $(\phi_1,\phi_2,\cdots,\phi_{k-1})$. Let $$0=E_0\subset E_1\subset\cdots\subset E_n=E$$ be its HN filtration with respect to $\sigma_{\phi_1,\phi_2,\cdots,\phi_{k-1}}^{t_1,t_2,\cdots,t_k}$. Then every HN factor $E_i/E_{i-1}$ is $k$-th level semistable of phase $(\phi_1,\cdots,\phi_{k-1},\phi_{k,i})$, where $\phi_{k,1}>\phi_{k,2}>\cdots>\phi_{k,n}$ is a decreasing sequence of real numbers.
	\end{lemma}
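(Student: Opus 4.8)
\emph{Plan.} The idea is to peel off the part of the statement that is immediate from the HN property of $\sigma_k$ and then to isolate the one substantive point. Write $\sigma_m\coloneqq \sigma_{\phi_1,\dots,\phi_{m-1}}^{t_1,\dots,t_m}$ for $1\leq m\leq k$ (so $\sigma_1=\sigma^{t_1}$), let $\mathcal H_{m-1}$ denote its heart $\mathcal A^{t_1,\dots,t_{m-1}}_{\phi_1,\dots,\phi_{m-1}}$, and recall from the repeated application of Proposition \ref{abelianizer} that $\mathcal H_{k-1}\subseteq \mathcal H_{k-2}\subseteq\cdots\subseteq \mathcal H_0=\mathcal A_S^{\leq j}$ is a chain of abelian subcategories whose abelian structure is in each case induced from that of $\mathcal A_S^{\leq j}$; in particular a short exact sequence in $\mathcal H_{k-1}$ is short exact in every $\mathcal H_{m-1}$. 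By the HN property of $\sigma_k$ supplied by Lemma \ref{slice stability condition}, every $G_i\coloneqq E_i/E_{i-1}$ lies in $\mathcal H_{k-1}$, is $\sigma_k$-semistable, and the $\sigma_k$-phases $\phi_{k,i}$ satisfy $\phi_{k,1}>\cdots>\phi_{k,n}$. Unwinding Definition \ref{l-thlevel definition}, it therefore remains only to prove that each $G_i$ is $(k-1)$-th level semistable of phase $(\phi_1,\dots,\phi_{k-1})$.

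First I would observe that each $E_i$, and hence each $E_{i-1}$, is itself $(k-1)$-th level semistable of phase $(\phi_1,\dots,\phi_{k-1})$: indeed $E_i\subseteq E$ inside $\mathcal H_{k-1}\subseteq\mathcal H_{m-1}$ for every $m\leq k$; if $\phi_m=1$ then $\mathcal H_{k-1}\subseteq\mathcal H_m=\mathcal P_{\sigma_m}(1)$, which is closed under subobjects, so $E_i$ is $\sigma_m$-semistable of phase $1$; if $\phi_m<1$ then $\mathrm{Im}\,Z_{\sigma_m}(E)>0$ and $E$ has no nonzero subobject with $\mathrm{Im}\,Z_{\sigma_m}=0$ (such an object would have infinite $\sigma_m$-slope and destabilize $E$), so $\mathrm{Im}\,Z_{\sigma_m}(E_i)>0$, whence $Z_{\sigma_m}(E_i)\in\exp(i\pi\phi_m)\mathbb R_{>0}$ because $E_i\in\mathcal A_{\sigma_m}(\phi_m)$, and $E_i$, having the same $\sigma_m$-slope as the semistable $E$, is $\sigma_m$-semistable of phase $\phi_m$. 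Consequently, for $m\leq k-1$ with $\phi_m<1$ the class $Z_{\sigma_m}(G_i)=Z_{\sigma_m}(E_i)-Z_{\sigma_m}(E_{i-1})$ lies in $\exp(i\pi\phi_m)\mathbb R_{\geq0}$, and \emph{as soon as it is nonzero} $G_i$ is a quotient of the $\sigma_m$-semistable $E_i$ with the same $\sigma_m$-slope, hence $\sigma_m$-semistable of phase $\phi_m$; for $m\leq k-1$ with $\phi_m=1$ one has $G_i\in\mathcal H_{k-1}\subseteq\mathcal P_{\sigma_m}(1)$ and there is nothing to prove. Thus everything reduces to showing that $Z_{\sigma_m}(G_i)\neq0$ whenever $m\leq k-1$ and $\phi_m<1$.

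For this, suppose $Z_{\sigma_m}(G_i)=0$ for some such $m$. Then $G_i$ lies in the abelianizer of $\sigma_m$, which by the coordinate description proved inside Lemma \ref{slice stability condition} means $b_j(G_i)=a_j(G_i)=\dots=a_{j-m+1}(G_i)=b_{j-m}(G_i)=0$. One then propagates this vanishing upward using $G_i\in\mathcal H_{k-1}$: at a level $m'$ with $\phi_{m'}<1$ the imaginary part of $Z_{\sigma_{m'}}$ is a coordinate already known to vanish, so $Z_{\sigma_{m'}}(G_i)$ is real and hence $0$ (by membership in $\mathcal A_{\sigma_{m'}}(\phi_{m'})$ with $\phi_{m'}<1$), and Proposition \ref{positive coefficients} then forces the next coordinate to vanish; at a level $m'$ with $\phi_{m'}=1$ one uses instead that $G_i\in\mathcal P_{\sigma_{m'}}(1)$. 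Tracking this up to level $k$ gives $\mathrm{Im}\,Z_{\sigma_k}(G_i)=0$, so $\mu_{\sigma_k}(G_i)=+\infty$ and $\phi_{k,i}=1$; since the $\phi_{k,\bullet}$ strictly decrease this forces $i=1$, i.e. $G_i=E_1$, a nonzero subobject of $E$. But $E$ is $\sigma_m$-semistable of phase $\phi_m<1$, so $\mathrm{Im}\,Z_{\sigma_m}(E)>0$ and $E$ admits no nonzero subobject with $\mathrm{Im}\,Z_{\sigma_m}=0$ --- contradicting $\mathrm{Im}\,Z_{\sigma_m}(E_1)=\mathrm{Im}\,Z_{\sigma_m}(G_1)=0$. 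This yields $Z_{\sigma_m}(G_i)\neq0$, and with it the lemma; the ordering $\phi_{k,1}>\cdots>\phi_{k,n}$ was already part of the HN property invoked above.

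I expect the hard part to be exactly the upward propagation in the previous paragraph when the vector $(\phi_1,\dots,\phi_{k-1})$ has an entry equal to $1$ lying strictly between level $m$ and level $k$: one must rule out that $G_i$ ``escapes'' the successive abelianizers and acquires a strictly finite $\sigma_{m'}$-slope at such an intermediate level, which would break the chain of vanishings. When all of $\phi_1,\dots,\phi_{k-1}$ are $<1$ the argument is immediate, since then $\mathrm{Im}\,Z_{\sigma_k}=b_j$ and $Z_{\sigma_m}(G_i)=0$ already gives $b_j(G_i)=0$; in general this is the place where the precise form of the central charges in Lemma \ref{slice stability condition} and the full chain of positivity inequalities in Proposition \ref{positive coefficients} have to be used with care.
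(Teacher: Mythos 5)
Your argument is correct, but it takes a genuinely different route from the paper's, so let me compare. The paper isolates a single ``descent'' claim: an object of $\mathcal{A}^{t_1,\dots,t_k}_{\phi_1,\dots,\phi_k}$ that is semistable for the level-$(k{+}1)$ charge is already honestly semistable of phase $\phi_k$ for the level-$k$ charge. This is proved from the quasi-semistable decomposition $0\to F_0\to F\to F'\to 0$ of Definition \ref{definition of abelianizer}: if $F_0\neq 0$ it has infinite slope at the next level, which forces $F$ itself into the abelianizer and all phases to equal $1$; the lemma then follows by iterating the claim downward from level $k$ applied to each HN factor. You work in the opposite direction: starting from the semistability of the ambient object $E$ at each level $m\leq k-1$, you show that the filtration steps $E_i$ (as subobjects of equal slope) and then the factors $G_i$ (as quotients of equal slope) inherit that semistability, and you eliminate the degenerate case $Z_{\sigma_m}(G_i)=0$ by showing it forces $\phi_{k,i}=1$, hence $i=1$, hence a nonzero subobject of $E$ with vanishing imaginary part at level $m$, contradicting $\phi_m<1$. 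Both routes rest on the same inputs --- the coordinate description of the abelianizers inside Lemma \ref{slice stability condition} and the positivity chain of Proposition \ref{positive coefficients} --- but yours makes explicit where equality of slopes is used, while the paper's claim packages the descent step in a form that applies to any next-level semistable object at once.

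The one point you leave open --- an entry $\phi_{m'}=1$ with $m<m'\leq k-1$ obstructing the upward propagation --- is in fact vacuous. If $\phi_{m'}=1$, then $\mathrm{Im}\,Z_{\sigma_{m'}}(E)=0$; by Lemma \ref{slice stability condition} this reads $b_j(E)=0$ when all earlier phases are $<1$, and $a_{j-n+1}(E)t_n-b_{j-n}(E)=0$ when $n<m'$ is the largest earlier index with $\phi_n=1$; in either case exactly the vanishing-propagation you describe, applied to $E$ itself rather than to $G_i$, forces $\phi_{m'-1}=1$ as well. Hence the entries equal to $1$ form an initial segment of $(\phi_1,\dots,\phi_{k-1})$ (this is the content of the Remark following the lemma in the paper), and $\phi_m<1$ already excludes $\phi_{m'}=1$ for every $m'>m$. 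With that observation your argument closes completely.
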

	
	\begin{proof}
		We let $Q_i$ denote the HN factor $E_i/E_{i-1}$. Then $Q_i$ is semistable with respect to $\sigma_{\phi_1,\phi_2,\cdots,\phi_{k-1}}^{t_1,t_2,\cdots,t_k}$ of phase $\phi_{k,i}$.
		
		We claim that: for any object $F\in\mathcal{A}_{\phi_1,\cdots,\phi_k}^{t_1,\cdots, t_k}$ which is semistable with respect to $\sigma_{\phi_1,\cdots,\phi_k}^{t_1,\cdots,t_{k+1}}$, $F$ is also semistable with respect to $\sigma_{\phi_1,\cdots,\phi_{k-1}}^{t_1,\cdots,t_{k}}$ of phase $\phi_k$. The lemma follows naturally from the claim.
		
		Indeed, by definition of $\mathcal{A}^{t_1,\cdots, t_k}_{\phi_1,\cdots,\phi_k}$, we have the following short exact sequence $$0\rightarrow F_0\rightarrow F\rightarrow F'\rightarrow 0$$ where $F_0$ is in the abelianizer of $\sigma_{\phi_1,\cdots,\phi_{k-1}}^{t_1,\cdots,t_{k}}$ and $F'$ is semistable with respect to $\sigma_{\phi_1,\cdots,\phi_{k-1}}^{t_1,\cdots,t_{k}}$ of phase $\phi_k$. 
		
		If $F_0\simeq 0$, the claim is true. So now we assume that $F_0$ is a nonzero object. By the proof of Lemma \ref{slice stability condition}, we know that $$b_j(F_0)=a_j(F_0)=\cdots=b_{j-k+1}(F_0)=a_{j-k+1}(F_0)=b_{j-k}(F_0)=0.$$
		
		Hence the slope of $F_0$ with respect to $\sigma_{\phi_1,\cdots,\phi_k}^{t_1,\cdots,t_{k+1}}$ is $+\infty$. By the assumption $F\in\mathcal{A}_{\phi_1,\cdots,\phi_k}^{t_1,\cdots,t_k}$ is semistable with respect to $\sigma_{\phi_1,\cdots,\phi_k}^{t_1,\cdots,t_{k+1}}$ of phase $\phi_k$, we get that the imaginary part of $F$ with respect to $\sigma_{\phi_1,\cdots,\phi_k}^{t_1,\cdots,t_{k+1}}$ should also be $0$. By the proof of Lemma \ref{slice stability condition}, we know that $F$ is in the abelianizer of $\sigma_{\phi_1,\cdots,\phi_{k-1}}^{t_1,\cdots,t_{k}}$. Hence the claim is true and in this case we have $\phi_1=\phi_2=\cdots=\phi_{k+1}=1$.
	\end{proof}
	
	\begin{remark}
		From the proofs of Lemma \ref{slice stability condition} and Lemma \ref{invariance of phase}, we know that if $E$ is $k$-th level semistable of phase $(\phi_1,\cdots,\phi_k)$, then $\phi_i=1$ implies that $E$ is in the abelianizer of $\sigma_{\phi_1,\cdots,\phi_{i-1}}^{t_1,\cdots,t_{i}}$ and hence $\phi_h=1$ for all $1\leq h\leq i$.
	\end{remark}
	\begin{theorem}\label{lexi-order filtration}
		Assume $l\leq j$, given $t_1,t_2,\cdots,t_l\in\mathbb{Q}_{>0}$, for any object $E\in\mathcal{A}_S^{\leq j}$, there exists a  unique filtration $$0=E_0\subset E_1\subset E_2\cdots \subset E_n=E,$$such that each quotient factor $E_{i}/E_{i-1}$ is $l$-th level semistable of phase $\vec{\phi}_i$ and $$\vec{\phi}_1>\vec{\phi}_2>\cdots>\vec{\phi}_n$$.
	\end{theorem}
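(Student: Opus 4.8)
The plan is to prove existence and uniqueness simultaneously by induction on $l$, peeling off the first level with a Harder--Narasimhan filtration and then recursing inside the abelianizer subcategories produced by Proposition \ref{abelianizer}. For the base case $l=1$, the pair $\sigma^{t_1}=(\mathcal{A}_S^{\leq j},Z_j^{t_1})$ is a weak stability condition with the HN property by Lemma \ref{lemma: first weak stability condition}, so every $E\in\mathcal{A}_S^{\leq j}$ has a Harder--Narasimhan filtration; by definition its factors are $1$-st level semistable, and their scalar phases strictly decrease, which is exactly the lexicographic condition when $l=1$. Uniqueness in this case is the standard argument recalled after the proposition in Section \ref{BSC}.

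\emph{Inductive step, existence.} Suppose the statement is known one level lower. Given $E\in\mathcal{A}_S^{\leq j}$, let $0=E_0\subset\cdots\subset E_n=E$ be the HN filtration of $E$ with respect to $\sigma^{t_1}$, with factors $F_i=E_i/E_{i-1}$ semistable of phase $\phi_{1,i}$ and $\phi_{1,1}>\cdots>\phi_{1,n}$. Each $F_i$, being semistable (hence quasi-semistable with trivial abelianizer part), lies in the subcategory $\mathcal{A}^{t_1}(\phi_{1,i})$, which is a Noetherian abelian category by Proposition \ref{abelianizer} and carries, by Lemma \ref{slice stability condition}, the chain of weak stability conditions $\sigma_{\phi_{1,i}}^{t_1,t_2},\ \sigma_{\phi_{1,i},\phi_2}^{t_1,t_2,t_3},\dots$ built from $t_2,\dots,t_l$. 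By Lemma \ref{invariance of phase} and the Remark following it, an object of $\mathcal{A}^{t_1}(\phi_{1,i})$ is $l$-th level semistable of phase $(\phi_{1,i},\psi_2,\dots,\psi_l)$ in $\mathcal{A}_S^{\leq j}$ exactly when it is $(l-1)$-st level semistable of phase $(\psi_2,\dots,\psi_l)$ for this chain. The inductive hypothesis then produces a filtration of $F_i$ whose factors are $l$-th level semistable with first coordinate $\phi_{1,i}$ and with strictly lexicographically decreasing tails; pulling it back along $E_i\twoheadrightarrow F_i$ refines the step $E_{i-1}\subset E_i$. Splicing these refinements over all $i$ gives a filtration of $E$ with all factors $l$-th level semistable. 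Within the $i$-th block the phase vectors decrease lexicographically by construction, and across consecutive blocks the first coordinate strictly drops, so the full sequence of phase vectors is strictly lexicographically decreasing.

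\emph{Inductive step, uniqueness.} Let $0=E_0'\subset\cdots\subset E_{n'}'=E$ be any filtration as in the theorem, with factors $G_m$ of phase vector $\vec{\phi}_m$ and $\vec{\phi}_1>\cdots>\vec{\phi}_{n'}$. Their first coordinates are weakly decreasing; partition the $G_m$ into maximal runs of equal first coordinate. Since for each $\rho$ the subcategory of $\sigma^{t_1}$-semistable objects of phase $\rho$ is closed under extensions (the see-saw argument, valid for weak stability conditions), each run contributes a subquotient of $E$ that is $\sigma^{t_1}$-semistable of that common phase, and these phases strictly decrease from run to run; hence the filtration of $E$ by the run boundaries is a HN filtration of $E$ for $\sigma^{t_1}$, so it agrees with $0=E_0\subset\cdots\subset E_n=E$ by uniqueness of HN filtrations. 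Consequently each run is a filtration of the corresponding $F_i$ inside $\mathcal{A}^{t_1}(\phi_{1,i})$ with $(l-1)$-st level semistable factors and strictly lexicographically decreasing tails, which is unique by the inductive hypothesis; therefore the given filtration of $E$ coincides with the one constructed above.

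\emph{Main obstacle.} The substance here is organizational rather than a single hard estimate: one must verify that the abelianizer subcategories $\mathcal{A}^{t_1}(\phi)$ and their iterates are Noetherian abelian categories supporting the required chain of weak stability conditions with discrete-image central charges, and that the notion of $l$-th level semistability restricts correctly along these inclusions so that the induction genuinely closes — which is precisely what Proposition \ref{abelianizer}, Lemma \ref{slice stability condition} and Lemma \ref{invariance of phase} are designed to provide. The only genuinely delicate points are the phases equal to $1$, where a factor may land in an abelianizer: by the Remark after Lemma \ref{invariance of phase} such a phase vector must have all earlier coordinates equal to $1$ as well, so such factors can occur only as the initial block at each level and never conflict with the lexicographic ordering.
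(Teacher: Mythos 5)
Your proposal is correct and follows essentially the same route as the paper's proof: both peel off a Harder--Narasimhan filtration level by level (using Lemma \ref{invariance of phase} to identify the factors as higher-level semistable objects), splice the refined filtrations via pullback along the quotient maps, and obtain uniqueness by grouping the phase vectors by their initial coordinates to recover the coarser HN filtrations, which are unique. The only difference is bookkeeping — you induct upward on $l$ while the paper inducts downward on the level of semistability already possessed by $E$ — and your closing remark about the phase-$1$/abelianizer subtlety matches the paper's treatment.
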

	
	\begin{proof}
		For any $k<l$, we claim that if $E$ is $k$-th level semistable of phase $\vec{\phi}=(\phi_1,\phi_2\cdots,\phi_k)$. There exists a filtration for $E$ as in the statement of theorem. Moreover, the first $k$ coordinates of all the vectors $\vec{\phi_1}>\vec{{\phi_2}}>\cdots>\vec{\phi_n}$ are equal to $\vec{\phi}$.
		
The claim can be shown by induction. Firstly, if $E$ is $(l-1)$-th level semistable of phase $(\phi_1,\phi_2,\cdots,\phi_{l-1})$. Then by Lemma \ref{invariance of phase}, we get the claim. 

 Assume that the claim is true for any $(l-i)$-th level semistable $E$. The goal is to prove the claim for any $(l-i-1)$-th level semistable object $E$. 

Now assume that $E$ is $(l-i-1)$-th level semistable of phase $(\phi_1,\phi_2,\cdots,\phi_{l-i-1})$, take its HN filtration with respect to $\sigma_{\phi_1,\phi_2,\cdots,\phi_{l-i-1}}^{t_1,t_2,\cdots,t_{l-i}}$. By induction on the number of HN factors, we can reduce to the case when there are only two HN factors. 

Indeed, suppose we have the following short exact sequence $$0\rightarrow K\rightarrow E\xrightarrow{f} Q\rightarrow0,$$ where $K, Q$ are $(l-i)$-th semistable of phases $$(\phi_1,\phi_2,\cdots,\phi_{l-i-1},\phi_{l-i,1}),\ (\phi_1,\phi_2,\cdots,\phi_{l-i-1},\phi_{l-i,2})$$ respectively. 

By induction, there exist filtrations $$0=K_0\subset K_1\subset \cdots\subset K_{n_1}=K$$ and 
$$0=Q_0\subset Q_1\subset \cdots\subset Q_{n_2}=Q$$ satisfying the requirements in the claim.

Now we take $E_{i}=K_{i}$ for $0\leq i\leq n_1$ and $E_{n_1+h}=f^{-1}(Q_h)$ for $0\leq h\leq n_2$, where $f^{-1}(Q_h)$ is the pullback of $Q_h$ along the map $f$. Then it is easy to check the filtration $$0=E_0\subset E_1\subset \cdots \subset E_{n_1+n_2}=E$$ satisfies the requirements of the claim. Hence we proved the existence of the filtration.

The uniqueness follows from the uniqueness of HN filtration. In fact, if we have a filtration $$0=E_0\subset E_1\subset E_2\cdots \subset E_n=E,$$such that each quotient factor $E_{i}/E_{i-1}$ is $l$-th level semistable of phase $\vec{\phi}_i$ and $$\vec{\phi}_1>\vec{\phi}_2>\cdots>\vec{\phi}_n,$$ then there is a partition $$S_1=\{\vec{\phi}_1,\vec{\phi}_2,\cdots,\vec{\phi}_{i_1}\},S_2=\{\vec{\phi}_{i_1+1},\cdots,\vec{\phi}_{i_2}\},\cdots,S_{k+1}=\{\vec{\phi}_{i_{k}+1},\cdots, \vec{\phi}_{n}\}$$ of the set $\{\vec{\phi}_1,\cdots,\vec{\phi}_n\}$ such that all the vectors in $S_i$ have the same first coordinate $\psi_i$ and $\psi_1>\psi_2>\cdots>\psi_{k+1}.$ 

By the proof of Proposition \ref{abelianizer}, we know that $$0=E_0\subset E_{i_1}\subset E_{i_2}\subset\cdots \subset E_{i_{k}}\subset E$$ is the HN filtration of $E$ with respect to $\sigma^{t_1}$. Hence it is unique. Then by inductively partitioning the vectors according to their next coordinates, we get the uniqueness of the filtration.
	\end{proof}

	We can use Theorem \ref{lexi-order filtration} to get finer torsion pairs of $\mathcal{A}_S$.
	
\begin{corollary}\label{finer cuts}
	Given $\vec{t}=(t_1,\cdots, t_{r+1})\in\mathbb{Q}_{>0}^{r+1}$ and $\vec{\phi}=(\phi_1,\cdots,\phi_{r+1})\in(0,1)^{r+1}$. Let $$\mathcal{T}_{\vec{\phi}}^{\vec{t}}\coloneqq \{E\in\mathcal{A}_S|all \ the \ factors \ in \ theorem\ \ref{lexi-order filtration}\ have \ phases \ > \vec{\phi}\},$$
	
	$$\mathcal{F}_{\vec{\phi}}^{\vec{t}}\coloneqq\{E\in\mathcal{A}_S|all \ the \ factors \ in \ theorem\ \ref{lexi-order filtration}\ have \ phases \leq \vec{\phi}\}.$$
	This pair is a torsion pair of $\mathcal{A}_S$.
	
\end{corollary}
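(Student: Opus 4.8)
The plan is to reduce both torsion-pair axioms to the lexicographic filtration of Theorem~\ref{lexi-order filtration} together with one Hom-vanishing statement between level-semistable objects. Concretely I must check (i) $\mathrm{Hom}_{\mathcal{A}_S}(T,F)=0$ for $T\in\mathcal{T}_{\vec\phi}^{\vec t}$ and $F\in\mathcal{F}_{\vec\phi}^{\vec t}$, and (ii) that every $E\in\mathcal{A}_S$ sits in a short exact sequence $0\to T\to E\to F\to 0$ with $T,F$ in the two subcategories.

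For (i), the heart of the matter is the following claim: if $A$ is $l$-th level semistable of phase $\vec\psi$ and $B$ is $l$-th level semistable of phase $\vec\psi'$ with $\vec\psi>\vec\psi'$ in the lexicographic order, then $\mathrm{Hom}_{\mathcal{A}_S}(A,B)=0$. To prove it, let $k$ be the first coordinate at which $\vec\psi$ and $\vec\psi'$ differ, so $\psi_1=\psi_1',\dots,\psi_{k-1}=\psi_{k-1}'$ (call these $\phi_1,\dots,\phi_{k-1}$) and $\psi_k>\psi_k'$. By Definition~\ref{l-thlevel definition} both $A$ and $B$ lie in the abelian subcategory $\mathcal{A}^{t_1,\dots,t_{k-1}}_{\phi_1,\dots,\phi_{k-1}}$ and are semistable there with respect to the weak stability condition $\sigma^{t_1,\dots,t_k}_{\phi_1,\dots,\phi_{k-1}}$ of Lemma~\ref{slice stability condition}, of phases $\psi_k>\psi_k'$. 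Since that subcategory is full in $\mathcal{A}_S$, any $f\colon A\to B$ factors through $\mathrm{im}(f)$, a quotient of $A$ and a subobject of $B$; comparing slopes (phase $1$ corresponding to $+\infty$, consistent by the remark after Lemma~\ref{invariance of phase}, which also covers the degenerate case $\psi_k=1>\psi_k'$) forces $f=0$. Granting this claim, a standard dévissage along the lexicographic filtrations of $T$ and $F$ — using left-exactness of $\mathrm{Hom}(-,F)$ and that every factor of $T$ has phase $>\vec\phi\ge$ every factor of $F$ — yields $\mathrm{Hom}_{\mathcal{A}_S}(T,F)=0$.

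For (ii), take $E\in\mathcal{A}_S$ and its lexicographic filtration $0=E_0\subset E_1\subset\cdots\subset E_n=E$ with $\vec\phi_1>\cdots>\vec\phi_n$ from Theorem~\ref{lexi-order filtration}. Since the lexicographic order is total, there is a unique index $i$ with $\vec\phi_i>\vec\phi\ge\vec\phi_{i+1}$ (taking $i=0$ or $i=n$ in the extreme cases), and strict monotonicity gives $\vec\phi_k>\vec\phi$ for $k\le i$ and $\vec\phi_k\le\vec\phi$ for $k>i$. Put $T=E_i$ and $F=E/E_i$. Then $0\subset E_1\subset\cdots\subset E_i$ is a filtration of $T$ whose factors are $l$-th level semistable with strictly decreasing phases all $>\vec\phi$, and $0\subset E_{i+1}/E_i\subset\cdots\subset E_n/E_i=F$ is such a filtration of $F$ with all factors of phase $\le\vec\phi$; by the uniqueness part of Theorem~\ref{lexi-order filtration} these are precisely the lexicographic filtrations of $T$ and $F$. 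Hence $T\in\mathcal{T}_{\vec\phi}^{\vec t}$ and $F\in\mathcal{F}_{\vec\phi}^{\vec t}$, and $0\to T\to E\to F\to 0$ is the required sequence.

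The main obstacle is the Hom-vanishing claim: identifying the correct differing coordinate $k$, verifying that $A$ and $B$ genuinely lie in the \emph{same} intermediate abelian subcategory $\mathcal{A}^{t_1,\dots,t_{k-1}}_{\phi_1,\dots,\phi_{k-1}}$ and remain semistable there, and correctly handling the phase-$1$ / zero-central-charge degeneracies of the weak stability conditions built in Lemma~\ref{slice stability condition}. Once that is settled, the torsion-pair axioms follow formally from Theorem~\ref{lexi-order filtration} and routine dévissage.
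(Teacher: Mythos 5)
Your proposal is correct and follows the same route as the paper, which simply states that the corollary "follows directly from Theorem \ref{lexi-order filtration}": you supply the standard details the paper omits, namely the Hom-vanishing between level-semistable objects of lexicographically decreasing phase (via semistability in the common intermediate abelian subcategory determined by the first differing coordinate) and the truncation of the lexicographic filtration at the index where the phases cross $\vec\phi$. Both steps check out, so this is just a fully written-out version of the paper's one-line argument.
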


\begin{proof}
	This follows directly from Theorem \ref{lexi-order filtration}. 
\end{proof}
	
	Hence we have a tilted heart $\mathcal{A}_{S,\vec{\phi}}^{\vec{t}}\coloneqq \langle \mathcal{F}_{\vec{\phi}}^{\vec{t}}[1], \mathcal{T}_{\vec{\phi}}^{\vec{t}}\rangle$. Take $\vec{\phi}=(\frac{1}{2},\cdots,\frac{1}{2})$, and denote the tilted heart by $\mathcal{A}_S^{t_1,\cdots, t_{r+1}}$.
	
	\begin{theorem}\label{positivity on tilted heart}
		For any object $E$ in $\mathcal{A}_S^{t_1,t_2,\cdots,t_{r+1}}$, we have the following inequalities.
		
		(1) $-a_r(E)t_1+b_{r-1}(E)\geq 0$.

		(2) For any positive integer $0< l\leq r$, if $$-a_r(E)t_1+b_{r-1}(E)=\cdots=-a_{r+1-l}(E)t_{l}+b_{r-l}(E)=0,$$ then we have $-a_{r-l}(E)t_{l+1}+b_{r-l-1}(E)\geq 0$. \footnote{Recall the convention $b_{-1}=0$.}
	\end{theorem}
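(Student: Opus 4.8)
The plan is to reduce the two assertions to a pointwise inequality on the factors of the lexicographic order filtrations of the two $\mathcal{A}_S$-cohomology objects of $E$, and then to add up. Throughout, for a class $v\in K_0(D(X\times S))$ and $1\leq k\leq r+1$, I will write $\rho_k(v):=-a_{r-k+1}(v)t_k+b_{r-k}(v)$ (with the convention $b_{-1}=0$); this is additive in $v$, and it is minus the real part of the central charge $Z^{(k)}$ defining the $k$-th level of semistability (Lemma \ref{slice stability condition}). In this notation assertion (1) reads $\rho_1(E)\geq 0$, and assertion (2) reads: $\rho_1(E)=\cdots=\rho_l(E)=0$ implies $\rho_{l+1}(E)\geq 0$.

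I would first prove a threshold statement: for every nonzero $(r+1)$-th level semistable object $G$ of phase $\vec{\phi}_G$ there is an index $\kappa(G)\in\{1,\cdots,r+1\}$ with $\rho_1(G)=\cdots=\rho_{\kappa(G)-1}(G)=0$, for which $\rho_{\kappa(G)}(G)>0$ when $\vec{\phi}_G>(\frac12,\cdots,\frac12)$ and $\rho_{\kappa(G)}(G)<0$ when $\vec{\phi}_G<(\frac12,\cdots,\frac12)$, while $\rho_k(G)=0$ for all $k$ when $\vec{\phi}_G=(\frac12,\cdots,\frac12)$. To prove it I would split into two cases. If $b_r(G)>0$, then an easy induction shows the phase of $G$ at every level lies in $(0,1)$ and that the imaginary part of $Z^{(k)}$ is $b_r$ there, so $Re\,Z^{(k)}(G)=b_r(G)\cot(\pi\phi_k)$ and hence the sign of $\rho_k(G)$ equals the sign of $\phi_k-\frac12$; taking $\kappa(G)$ to be the first level at which $\phi_k\neq\frac12$ gives the claim (and if there is no such level then $\vec{\phi}_G=(\frac12,\cdots,\frac12)$). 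If $b_r(G)=0$, then $Im\,Z^{(1)}(G)=0$, so the weak stability function axiom forces $Z^{(1)}(G)\in\mathbb{R}_{\leq 0}$; thus $\phi_1(G)=1$, so $\vec{\phi}_G>(\frac12,\cdots,\frac12)$ and $\rho_1(G)\geq 0$, and if $\rho_1(G)=0$ then $a_r(G)=b_{r-1}(G)=0$ by Proposition \ref{positive coefficients}(2), i.e.\ $G$ lies in the abelianizer of the first level stability. Inductively, $\rho_1(G)=\cdots=\rho_k(G)=0$ forces $b_r(G)=a_r(G)=\cdots=a_{r-k+1}(G)=b_{r-k}(G)=0$, i.e.\ $G$ lies in the abelianizer of the $k$-th level stability (computed as in the proof of Lemma \ref{slice stability condition}), and then Proposition \ref{positive coefficients}(3) gives $\rho_{k+1}(G)\geq 0$; since $G\neq 0$, Proposition \ref{positive coefficients}(4) rules out $\rho_1(G)=\cdots=\rho_{r+1}(G)=0$, so a strictly positive value $\rho_{\kappa(G)}(G)$ appears with $\kappa(G)\leq r+1$.

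The remaining step is additive bookkeeping. Since $\mathcal{A}_S^{t_1,\cdots,t_{r+1}}=\langle\mathcal{F}_{\vec{\phi}}^{\vec{t}}[1],\mathcal{T}_{\vec{\phi}}^{\vec{t}}\rangle$ with $\vec{\phi}=(\frac12,\cdots,\frac12)$, for $E$ in this heart the $\mathcal{A}_S$-cohomology objects are $T:=H^0(E)\in\mathcal{T}_{\vec{\phi}}^{\vec{t}}$ and $F:=H^{-1}(E)\in\mathcal{F}_{\vec{\phi}}^{\vec{t}}$, with $[E]=[T]-[F]$ in $K_0$. Let $G_1,\cdots,G_m$ and $G_1',\cdots,G_{m'}'$ be the lexicographic order factors of $T$ and of $F$ (Theorem \ref{lexi-order filtration}); by definition of the torsion pair $\vec{\phi}_{G_a}>(\frac12,\cdots,\frac12)$ and $\vec{\phi}_{G_b'}\leq(\frac12,\cdots,\frac12)$ for all $a,b$. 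Additivity gives $\rho_k(E)=\sum_a\rho_k(G_a)-\sum_b\rho_k(G_b')$, and by the threshold statement each $G_a$ has a threshold $\kappa(G_a)$ with $\rho_{\kappa(G_a)}(G_a)>0$, while each $G_b'$ either has $\rho_k(G_b')=0$ for all $k$ or has a threshold $\kappa(G_b')$ with $\rho_{\kappa(G_b')}(G_b')<0$. An induction on $s$ then shows that $\rho_1(E)=\cdots=\rho_s(E)=0$ forces all thresholds to be $\geq s+1$: assuming all thresholds are $\geq s$, every factor of threshold $>s$ contributes zero to $\rho_s(E)$, so $\rho_s(E)=\sum_{\kappa(G_a)=s}\rho_s(G_a)+\sum_{\kappa(G_b')=s}\bigl(-\rho_s(G_b')\bigr)$, a sum of strictly positive terms, which vanishes only if no factor has threshold exactly $s$. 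Taking $s=0$ yields $\rho_1(E)\geq 0$, which is (1); taking $s=l$ yields $\rho_{l+1}(E)\geq 0$, which is (2).

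I expect the threshold statement, and within it the degenerate case $b_r(G)=0$, to be the main obstacle: one must verify carefully that the successive vanishings $\rho_1(G)=\cdots=\rho_k(G)=0$ really do push $G$ into the abelianizer of the $k$-th iterated stability (so that Proposition \ref{positive coefficients} applies at level $k+1$), and that the bookkeeping of which phase vectors land in $\mathcal{T}_{\vec{\phi}}^{\vec{t}}$ versus $\mathcal{F}_{\vec{\phi}}^{\vec{t}}$—in particular that a factor of phase exactly $(\frac12,\cdots,\frac12)$ lies in $\mathcal{F}_{\vec{\phi}}^{\vec{t}}$ and annihilates every $\rho_k$—is consistent with the lexicographic order of Definition \ref{l-thlevel definition}. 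The remaining arguments are formal.
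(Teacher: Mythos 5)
Your proposal is correct and follows essentially the same route as the paper: decompose $E$ via the torsion-pair triangle $0\to F[1]\to E\to T\to 0$, use additivity $[E]=[T]-[F]$ in $K_0$, and show that the vanishing of the first $l$ quantities forces everything to sit at phase $\tfrac12$ through level $l$ so that the definition of $\mathcal{T}_{\vec{\phi}}^{\vec{t}}$ and $\mathcal{F}_{\vec{\phi}}^{\vec{t}}$ gives the sign at level $l+1$. Your per-factor threshold statement (with the two cases $b_r(G)>0$ and $b_r(G)=0$, the latter handled via Proposition \ref{positive coefficients} and the abelianizer) is exactly the inductive step the paper compresses into ``by induction, we can easily show that $T,F\in\mathcal{A}_{\frac12,\cdots,\frac12}^{t_1,\cdots,t_l}$,'' so you have supplied the details rather than diverged from the argument.
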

	
	\begin{proof}
		By the definition of $\mathcal{A}_S^{t_1,t_2,\cdots,t_{r+1}}$, we have the following short exact sequence in $\mathcal{A}_S^{t_1,t_2,\cdots,t_{r+1}}$.
		
		$$0\rightarrow F[1]\rightarrow E\rightarrow T\rightarrow 0,$$ where $F\in\mathcal{F}_{\vec{\phi}}^{\vec{t}}$ and $T\in\mathcal{T}_{\vec{\phi}}^{\vec{t}}$, where $\vec{\phi}=(\frac{1}{2},\cdots,\frac{1}{2})$ and $\vec{t}=(t_1,t_2,\cdots,t_{r+1})$.

		By induction, we can easily show that if $$-a_r(E)t_1+b_{r-1}(E)=\cdots=-a_{r+1-l}(E)t_{l}+b_{r-l}(E)=0,$$ then we have $T, F\in\mathcal{A}_{\frac{1}{2},\cdots,\frac{1}{2}}^{t_1,t_2,\cdots,t_l }$. Hence by the definition of $\mathcal{F}_{\vec{\phi}}^{\vec{t}}$ and $\mathcal{T}_{\vec{\phi}}^{\vec{t}}$, we have $$-a_{r-l}(T)t_{l+1}+b_{r-l-1}(T)\geq 0$$ and $$-a_{r-l}(F)t_{l+1}+b_{r-l-1}(F)\leq 0.$$
		
		This completes the proof.
	\end{proof}
	
	\begin{remark}
	We choose the vector $\vec{\phi}$ to be $(\frac{1}{2},\cdots,\frac{1}{2})$ just for the simplicity of the statement. One can easily generalize the statement for arbitrary vector $\vec{\phi}\in(0,1]^{r+1}$.
	\end{remark}

\section*{Conflict of Interest}   The author declares that there is no conflict of interest.

\bibliographystyle{alpha}
\bibliography{bibfile}

\end{document}